\theoremstyle{plain}
\newtheorem{theorem}{Theorem}[section]
\theoremstyle{plain}
\newtheorem{lemma}[theorem]{Lemma}
\theoremstyle{plain}
\newtheorem{corollary}[theorem]{Corollary}
\theoremstyle{definition}
\newtheorem{definition}[theorem]{Definition}
\theoremstyle{plain}
\newtheorem{proposition}[theorem]{Proposition}
\theoremstyle{remark}
\newtheorem{remark}[theorem]{Remark}
\theoremstyle{definition}
\newtheorem{example}[theorem]{Example}
\theoremstyle{plain}
\theoremstyle{plain}
\theoremstyle{plain}
\newtheorem{question}[theorem]{Question}
\title{Coarse Proximity and Proximity at Infinity}
\author{Pawel Grzegrzolka}
\address{University of Tennessee, Knoxville, USA} 
\email{pgrzegrz@vols.utk.edu}
\author{Jeremy Siegert}
\address{University of Tennessee, Knoxville, USA} 
\email{jsiegert@vols.utk.edu}
\date{\today} 
\keywords{coarse geometry, metric geometry, coarse topology, coarse proximity, proximity, hyperspace, proximity at infinity\\
published in \textit{Topology and its Applications}, 251:18-46,2019, published version available at https://doi.org/10.1016/j.topol.2018.10.009}
\subjclass[2010]{54E05,51F99}
\begin{document}

\begin{abstract}
We define coarse proximity structures, which are an analog of small-scale proximity spaces in the large-scale context. We show that metric spaces induce coarse proximity structures, and we construct a natural small-scale proximity structure, called the proximity at infinity, on the set of equivalence classes of unbounded subsets of an unbounded metric space given by the relation of having finite Hausdorff distance. We show that this construction is functorial. Consequently, the proximity isomorphism type of the proximity at infinity of an unbounded metric space $X$ is a coarse invariant of $X$.
\end{abstract}

\maketitle
\tableofcontents

\section{Introduction}
In classical topology, there are various structures for studying small-scale notions. Of particular interest are uniform structures (see \cite{Isbell} or \cite{willard}) which axiomatize notions of uniform continuity and uniform boundedness, as well as proximity structures which axiomatize the notion of nearness (see \cite{proximityspaces}). In contrast to classical topology, coarse topology investigates the large-scale aspects of spaces, including the large-scale analog of uniform spaces, called coarse spaces (see \cite{Roe} or \cite{alternativedefinition}). Because coarse spaces generalize coarse properties of metric spaces, coarse geometry of metric spaces provides significant insight into the properties of coarse spaces.

Recently, coarse topologists have attempted to define a notion of large-scale proximity. In \cite{Hartmann}, Hartmann defines a binary relation on the power set of a metric space as the negation of asymptotic disjointness. This "closeness" relation is used to construct a uniform space on a set of equivalence classes of certain unbounded subsets of a metric space. In \cite{Honari}, asymptotic resemblance relations are defined, which generalize the notion of the Hausdorff distance between two subsets of a metric space being finite. As shown in \cite{Honari}, asymptotic resemblances "coarsen" many foundational results of proximity structures; however, there are several significant differences between the two notions. For example, asymptotic equivalences are equivalence relations, whereas small-scale proximity relations are not. In other words, an asymptotic resemblance on a set captures when two subsets are "the same at infinity" instead of capturing their coarse closeness. In this paper, we define a coarse analog of proximity spaces which stems from a more direct translation of small-scale proximities into the coarse context. In section \ref{basics}, we review the needed definitions and results related to small-scale proximity spaces and coarse spaces. In section \ref{mainbody}, we introduce the metric coarse proximity relation, and in section \ref{coarseproximityspaces} we generalize this relation to an arbitrary set with a bornology. Section \ref{coarseneighborhoods} is devoted to  coarse neighborhoods and their properties. In section \ref{equivalencerelation}, we show that every coarse proximity induces an equivalence relation on the power set of a given coarse proximity space, which we call a weak asymptotic resemblance. In the case of metric spaces, this equivalence relation is equivalent to the Hausdorff distance between two sets being finite. In section \ref{coarsecategory}, we introduce coarse proximity maps, and we describe the category of coarse proximity spaces whose morphisms are closeness classes of coarse proximity maps. Finally, in section \ref{hyperspace} we show how to define a natural small-scale proximity structure on the set of unbounded subsets of a metric space. We also show how this structure naturally induces a small-scale proximity on the equivalence classes of the weak asymptotic resemblance induced by the metric. We call this space the proximity space at infinity of the metric space and we prove that this construction is functorial. We conclude with some open questions.
\section{Preliminaries}\label{basics}
In an attempt to provide the reader with a self-contained paper, we devote this section to recalling needed definitions and results related to proximity spaces and coarse spaces. An experienced reader may want to skip ahead to section \ref{mainbody} and refer to section \ref{basics} if necessary.

\begin{definition}\label{proximity}
Let $X$ be a set. A \textbf{proximity} on a set $X$ is a relation $\delta$ on the power set of $X$ satisfying the following axioms for all $A,B,C \subseteq X:$
\begin{enumerate}[(i)]
	\item $A\delta B$ implies $B\delta A,$
	\item $(A\cup B)\delta C$ if and only if $A\delta C\text{ or }B\delta C,$
	\item $A\delta B$ implies $A\neq\emptyset\text{ and }B\neq\emptyset,$
	\item $A\bar{\delta}B$ implies that there exists a subset $E$ such that $A\bar{\delta}E$ and $(X\setminus E)\bar{\delta}B,$
	\item $A\cap B\neq\emptyset$ implies $A\delta B,$
\end{enumerate}
where $A\bar{\delta}B$ means "$A\delta B$ is not true." If $A \delta B$, then we say that $A$ is \textbf{close} to (or \textbf{near}) $B.$ Axiom (ii) is called the {\bf union axiom} and (iv) is usually called the {\bf strong axiom}. A {\bf proximity space} is a pair $(X,\delta),$ where $X$ is a set and $\delta$ is proximity on $X$ as defined above.
\end{definition}

\begin{example}
If $(X,d)$ is a metric space, then the proximity relation defined by
\[A\delta B \quad \text{if and only if} \quad d(A,B)=0,\]
where $d(A,B):=\inf\{ d(x,y) \mid x \in A, y \in B \}$, is called the \textbf{metric proximity}.
\end{example}

\begin{definition}
	Given a proximity space $(X,\delta)$ and subsets $A,B\subseteq X,$ we say that $B$ is a {\bf proximal neighborhood} of $A$, denoted $A\ll B,$ if $A\bar{\delta}(X\setminus B)$.
\end{definition}

\begin{definition}
	Given a proximity space $(X,\delta),$ the {\bf induced topology} on $X$ is defined by the closure operator $cl(A)=\{x\in X\mid\{x\}\delta A\}$.
\end{definition}

\begin{definition}
	A function $f:(X,\delta_{1})\rightarrow(Y,\delta_{2})$ is called a {\bf proximity map} if for all $A,B\subseteq X,$ $A\delta_{1} B$ implies $f(A)\delta_{2}f(B)$.
\end{definition}

\begin{remark}
	All proximity maps are continuous with respect to the induced topologies on the domain and codomain.
\end{remark}

\begin{definition}
	Given a set $X$ and two proximities $\delta_{1},\delta_{2}$ on $X,$ we say that $\delta_{1}$ is \textbf{finer} than $\delta_{2}$ (or $\delta_{2}$ is \textbf{coarser} than $\delta_{1}$), denoted $\delta_{1}>\delta_{2}$, if $A\delta_{1}B$ implies $A\delta_{2}B$.
\end{definition}

The following result is from \cite{proximityspaces}:

\begin{proposition}\label{proximityondomain}
	Given a function $f:X\rightarrow (Y,\delta_{2})$, the coarsest proximity $\delta_{0}$ on $X$ for which $f$ is a proximity map is defined by
	\[A\bar{\delta}_{0}B\text{ if and only if there is a }C\subseteq Y\text{ such that }f(A)\bar{\delta}_{2}(Y\setminus C)\text{ and }f^{-1}(C)\subseteq(X\setminus B)\]
\end{proposition}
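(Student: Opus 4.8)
The plan is to check three things: that the relation $\delta_{0}$ is a proximity on $X$, that $f:(X,\delta_{0})\rightarrow(Y,\delta_{2})$ is a proximity map, and that $\delta_{0}$ is coarser than every proximity on $X$ that makes $f$ a proximity map. Two elementary consequences of Definition \ref{proximity} will be used repeatedly: monotonicity, i.e. $A\delta B$ together with $B\subseteq B'$ gives $A\delta B'$ via the union axiom (so $\bar{\delta}$ is antitone in each variable), and the fact that $A\bar{\delta}B$ forces $A\cap B=\emptyset$ (the contrapositive of axiom (v)). It is also convenient to note that the condition $f^{-1}(C)\subseteq X\setminus B$ is equivalent to $f(B)\subseteq Y\setminus C$, so writing $D:=Y\setminus C$ the defining relation becomes: $A\bar{\delta}_{0}B$ if and only if there is a set $D\subseteq Y$ with $f(B)\subseteq D$ and $f(A)\bar{\delta}_{2}D$. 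I will work with this reformulation throughout.

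First I would dispatch the axioms inherited directly from $\delta_{2}$, together with the map property and coarseness. For axiom (iii): if $B=\emptyset$ take $D=\emptyset$, and if $A=\emptyset$ take $D=Y$; in either case axiom (iii) for $\delta_{2}$ gives $A\bar{\delta}_{0}B$. For axiom (v): if $A\bar{\delta}_{0}B$ with witness $D$, then $f(A)\cap D=\emptyset$ while $f(B)\subseteq D$, so no point can lie in both $A$ and $B$. For the union axiom (ii): if $A\bar{\delta}_{0}C$ and $B\bar{\delta}_{0}C$ have witnesses $D_{1},D_{2}$, then $D:=D_{1}\cap D_{2}$ contains $f(C)$ and satisfies $f(A)\bar{\delta}_{2}D$ and $f(B)\bar{\delta}_{2}D$ by antitonicity, whence $(f(A)\cup f(B))\bar{\delta}_{2}D$ by the union axiom for $\delta_{2}$; the reverse implication splits a single witness using that same axiom. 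That $f$ is a proximity map is the contrapositive assertion that $f(A)\bar{\delta}_{2}f(B)$ implies $A\bar{\delta}_{0}B$, seen by taking $D=f(B)$. For coarseness, given any proximity $\delta$ making $f$ a proximity map and any $A\bar{\delta}_{0}B$ with witness $D$, note that $A\delta f^{-1}(D)$ would yield $f(A)\delta_{2}f(f^{-1}(D))$ and hence $f(A)\delta_{2}D$ by monotonicity, contradicting $f(A)\bar{\delta}_{2}D$; thus $A\bar{\delta}f^{-1}(D)$, and since $B\subseteq f^{-1}(D)$ antitonicity gives $A\bar{\delta}B$.

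This leaves symmetry (i) and the strong axiom (iv), which are exactly the places where the strong axiom of $\delta_{2}$ is needed; symmetry is the step I expect to be the main obstacle, since the defining formula is blatantly asymmetric in $A$ and $B$. For symmetry, starting from $f(B)\subseteq D$ and $f(A)\bar{\delta}_{2}D$, I would apply the strong axiom of $\delta_{2}$ to $f(A)\bar{\delta}_{2}D$ to obtain $E$ with $f(A)\bar{\delta}_{2}E$ and $(Y\setminus E)\bar{\delta}_{2}D$. Then $D':=Y\setminus E$ contains $f(A)$ (because $f(A)\cap E=\emptyset$ by axiom (v)) and satisfies $f(B)\bar{\delta}_{2}D'$ (because $D'\bar{\delta}_{2}D$, $f(B)\subseteq D$, and $\delta_{2}$ is symmetric), so $D'$ witnesses $B\bar{\delta}_{0}A$. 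For the strong axiom, starting again from $f(B)\subseteq D$ and $f(A)\bar{\delta}_{2}D$, I would apply the strong axiom of $\delta_{2}$ to get $F$ with $f(A)\bar{\delta}_{2}F$ and $(Y\setminus F)\bar{\delta}_{2}D$, and set $E:=f^{-1}(F)$. Then $F$ witnesses $A\bar{\delta}_{0}E$ (since $f(E)\subseteq F$), while $D$ witnesses $(X\setminus E)\bar{\delta}_{0}B$ (since $f(X\setminus E)\subseteq Y\setminus F$ and $(Y\setminus F)\bar{\delta}_{2}D$ combine by antitonicity). Beyond identifying these witnessing sets, all that remains is routine bookkeeping with images, preimages, and complements.
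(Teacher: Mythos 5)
Your proposal is correct: the reformulation $A\bar{\delta}_{0}B$ if and only if there exists $D\supseteq f(B)$ with $f(A)\bar{\delta}_{2}D$ is valid, and every verification goes through, including the two genuinely nontrivial ones (symmetry and the strong axiom for $\delta_{0}$), each correctly reduced to an application of the strong axiom of $\delta_{2}$ plus axiom (v) and monotonicity. The paper itself gives no proof of this proposition---it is cited from \cite{proximityspaces}---so there is nothing in-paper to compare against; note, however, that your strong-axiom witness $E=f^{-1}(F)$ is precisely the set the paper later quotes from \cite{proximityspaces} in its proof of Proposition \ref{quotientproximityproperty} (there written $E=g^{-1}(D)$), so your argument coincides with the standard one.
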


\begin{definition}\label{surjectiveproximity}
	Let $f$ be a surjective function from a proximity space $(X,\delta)$ onto a set $Y$. The \textbf{quotient proximity} is the finest proximity on $Y$ for which $f$ is a proximity map.
\end{definition}

In \cite{friedler}, it is shown that such a proximity always exists. For a detailed description of quotient proximities we refer the reader to \cite{friedler}. An important property of quotient proximities that will be used in section \ref{hyperspace} is the following:

\begin{proposition}\label{quotientproximityproperty}
	Let $(X,\delta_{1})$ be a proximity, $f:X\rightarrow Y$ a surjective function, and $g:Y\rightarrow(Z,\delta_{3})$ a function. If $\delta_{2}$ is the quotient proximity on $Y$ induced by $f,$ then $g\circ f$ is a proximity map if and only if $g$ is a proximity map from $(Y,\delta_{2})$ to $(Z,\delta_{3})$.
		\begin{center}
		\begin{tikzcd}[column sep=large]
			(X,\delta_{1}) \arrow{dd}{f} \arrow{rdd}{g \circ f} 
			&  \\
			& \\
			Y \arrow{r}{g} & (Z, \delta_{3})
		\end{tikzcd}
	\end{center}
\end{proposition}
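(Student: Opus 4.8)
The plan is to prove the two implications separately. The forward implication is formal, while the reverse implication is the substantive one and will be handled by introducing the coarsest proximity on $Y$ making $g$ a proximity map, as furnished by Proposition~\ref{proximityondomain}.

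For the direction in which $g:(Y,\delta_2)\to(Z,\delta_3)$ is assumed to be a proximity map, I would first record the elementary fact that a composite of proximity maps is again a proximity map: if $A\,\delta_1\,B$ then $f(A)\,\delta_2\,f(B)$, and hence $g(f(A))\,\delta_3\,g(f(B))$. Since $\delta_2$ is the quotient proximity induced by $f$, the map $f:(X,\delta_1)\to(Y,\delta_2)$ is by definition a proximity map, so this fact immediately yields that $g\circ f$ is a proximity map.

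For the reverse direction, assume $g\circ f$ is a proximity map. The strategy is to let $\delta_0$ be the coarsest proximity on $Y$ for which $g:(Y,\delta_0)\to(Z,\delta_3)$ is a proximity map; this exists and is described explicitly by Proposition~\ref{proximityondomain}, namely $U\,\bar{\delta}_0\,V$ if and only if there is $C\subseteq Z$ with $g(U)\,\bar{\delta}_3\,(Z\setminus C)$ and $g^{-1}(C)\subseteq Y\setminus V$. I would then show that $f:(X,\delta_1)\to(Y,\delta_0)$ is a proximity map. Granting this, the defining property of the quotient proximity---that $\delta_2$ is the \emph{finest} proximity on $Y$ making $f$ a proximity map---forces $\delta_2>\delta_0$; consequently $A\,\delta_2\,B$ implies $A\,\delta_0\,B$, which implies $g(A)\,\delta_3\,g(B)$, and this is precisely the statement that $g:(Y,\delta_2)\to(Z,\delta_3)$ is a proximity map.

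The crux is therefore to verify that $f:(X,\delta_1)\to(Y,\delta_0)$ is a proximity map, which I would establish by contraposition. Suppose $f(A)\,\bar{\delta}_0\,f(B)$; the explicit formula produces $C\subseteq Z$ with $g(f(A))\,\bar{\delta}_3\,(Z\setminus C)$ and $g^{-1}(C)\subseteq Y\setminus f(B)$. The second condition unwinds to $(g\circ f)(B)\subseteq Z\setminus C$, and then monotonicity of $\bar{\delta}_3$ in each argument (a consequence of the union axiom (ii)) upgrades $(g\circ f)(A)\,\bar{\delta}_3\,(Z\setminus C)$ to $(g\circ f)(A)\,\bar{\delta}_3\,(g\circ f)(B)$. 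Since $g\circ f$ is a proximity map, this forces $A\,\bar{\delta}_1\,B$, completing the contrapositive. I expect the main obstacle to be the bookkeeping with preimages and images---correctly converting $g^{-1}(C)\subseteq Y\setminus f(B)$ into the inclusion $(g\circ f)(B)\subseteq Z\setminus C$ and then applying monotonicity---rather than any deeper structural difficulty; the genuinely decisive idea is to route the argument through the coarsest proximity $\delta_0$ supplied by Proposition~\ref{proximityondomain}.
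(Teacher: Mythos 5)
Your proposal is correct, and its overall architecture is the same as the paper's: both dispose of the easy direction by composing $f:(X,\delta_{1})\rightarrow(Y,\delta_{2})$ with $g$, and both prove the converse by introducing the coarsest proximity $\delta_{0}$ (the paper's $\delta_{g}$) on $Y$ induced by $g$ via Proposition \ref{proximityondomain}, showing that $f:(X,\delta_{1})\rightarrow(Y,\delta_{0})$ is a proximity map, and then invoking the fact that the quotient proximity is the finest one making $f$ a proximity map to get $\delta_{2}>\delta_{0}$ and transfer closeness to $Z$. Where you genuinely diverge is in the crux verification that $f$ is a proximity map into $(Y,\delta_{0})$. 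The paper works with subsets $A,B\subseteq Y$ satisfying $A\bar{\delta}_{g}B$, applies the strong axiom in $(Z,\delta_{3})$ to produce a set $D$, forms the separating set $E=g^{-1}(D)$ citing a fact from \cite{proximityspaces}, and derives a contradiction via $(g\circ\pi)(\pi^{-1}(A))=g(A)$ and $(g\circ\pi)(\pi^{-1}(E))=D$ --- steps that use the surjectivity of $f$ (and which, strictly speaking, only give $g(g^{-1}(D))\subseteq D$ unless $g$ is surjective, though this suffices by monotonicity). You instead contrapose directly on subsets of $X$: applying the explicit formula for $\bar{\delta}_{0}$ to the pair $f(A),f(B)$, using the identity $Y\setminus g^{-1}(C)=g^{-1}(Z\setminus C)$ to unwind $g^{-1}(C)\subseteq Y\setminus f(B)$ into $(g\circ f)(B)\subseteq Z\setminus C$, and invoking monotonicity of $\delta_{3}$ (a consequence of the union axiom) to obtain $(g\circ f)(A)\bar{\delta}_{3}(g\circ f)(B)$, whence $A\bar{\delta}_{1}B$ because $g\circ f$ is a proximity map. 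All of this checks out, and your version buys something real: it is shorter and self-contained, avoiding the strong axiom, the auxiliary set $E$, the external citation, and any use of surjectivity at that step (surjectivity enters only through the existence of the quotient proximity itself). The paper's heavier route mainly buys consistency with the standard treatment of induced proximities in \cite{proximityspaces}, but nothing in the statement requires it.
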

\begin{proof}
	It is clear that if $g$ is a proximity map, then so is $g\circ f$. To prove the converse, assume that $g\circ f$ is a proximity map. Consider the proximity $\delta_{g}$ induced on the set $Y$ by $g$ as defined in Proposition \ref{proximityondomain}. We will show that $\pi:(X,\delta_{1})\rightarrow(Y,\delta_{g})$ is a proximity mapping. Let $A,B\subseteq Y$ be such that $A\bar{\delta}_{g}B$. Then there is a $C\subseteq Z$ such that $g(A)\bar{\delta}_{3}(Z\setminus C)$. By the strong axiom there is then a $D\subseteq Z$ such that $g(A)\bar{\delta}_{3}D$ and $(Z\setminus D)\bar{\delta}_{3}(Z\setminus C)$. As proven in \cite{proximityspaces}, the set $E=g^{-1}(D)$ is a set such that $A\bar{\delta}_{g}E$ and $(Y\setminus E)\bar{\delta}_{g}B$. If $\pi^{-1}(A){\delta}_{1}\pi^{-1}(E),$ then we have
	\[(g\circ\pi)(\pi^{-1}(A))\delta_{3}(g\circ\pi)(\pi^{-1}(E)).\]
	However, note that
	\[(g\circ\pi)(\pi^{-1}(A))=g(A)\text{ and }(g\circ\pi)(\pi^{-1}(E))=g(E)= D,\]
	which would imply that $g(A)\delta_{3}D,$ which is a contradiction. Then, because $\pi^{-1}(B)\subseteq\pi^{-1}(E)$ we have that $\pi^{-1}(A)\bar{\delta}_{1}\pi^{-1}(B)$, which establishes that $\pi$ is a proximity map when $Y$ is equipped with the proximity $\delta_{g}$. By the definition of the quotient proximity we must then have that $\delta_{2}$ is finer than $\delta_{g}$. Now assume towards a contradiction that $g:(Y,\delta_{2})\rightarrow(Z,\delta_{3})$ is not a proximity mapping. Then there are subsets $A,B\subseteq Y$ such that $A\delta_{2}B$ and $g(A)\bar{\delta}_{3}g(B)$. However, because $\delta_{2}$ is finer than $\delta_{g}$ we have that $A\delta_{g}B$ and because $g$ is a proximity mapping when $Y$ is equipped with $\delta_{g}$ we have that $g(A)\delta_{3}g(B)$, which is a contradiction. Thus, we must have that $g$ is a proximity mapping.
\end{proof}

\begin{definition}
A \textbf{coarse structure} on a set $X$ is a collection $\mathcal{E}$ of subsets of $X \times X,$ called \textbf{controlled sets} or \textbf{entourages}, such that the following are satisfied:
\begin{enumerate}
\item $\triangle \in \mathcal{E}$, where $\triangle:=\{(x, x) \mid x \in X\},$
\item if $E \in \mathcal{E}$ and $B \subseteq E,$ then $B \in \mathcal{E},$
\item if $E \in \mathcal{E},$ then $E^{-1} \in \mathcal{E},$ where $E^{-1}:=\{(x,y) \mid (y,x) \in E\},$
\item if $E \in \mathcal{E}$ and $F \in \mathcal{E},$ then $E \cup F \in \mathcal{E},$
\item if $E \in \mathcal{E}$ and $F \in \mathcal{E},$ then $E \circ F \in \mathcal{E},$ where $E \circ F:=\{(x,y) \mid \exists \, z \in X \text{ such that } (x,z) \in E, (z,y) \in F\}.$
\end{enumerate}
A set $X$ endowed with a coarse structure $\mathcal{E}$ is called a \textbf{coarse space}.
\end{definition}

\begin{definition}
If $(X, \mathcal{E})$ is a coarse space, $A$ a subset of $X,$ and $E$ a controlled set, then we define
\[E[A]=\{x \in X \mid \exists \, a \in A \text { such that } (x,a) \in E\}.\]
\end{definition}

\begin{definition}
Let $(X,d_1)$ and $(X, d_2)$ be metric spaces. Let $f: X \to Y$ be a function. Then
\begin{enumerate}
\item $f$ is called \textbf{proper} if the inverse images (under $f$) of bounded sets in $Y$ are bounded in $X,$
\item $f$ is called (uniformly) \textbf{bornologous} if uniformly bounded families of sets are sent to uniformly bounded families, i.e., for all $R>0$ there exists $S>0$ such that
\[d_1(x_1,x_2)<R \implies d_2(f(x_1), f(x_2))<S,\]
\item $f$ is called \textbf{coarse} if it is proper and bornologous.
\end{enumerate}
\end{definition}

\begin{definition}\label{coarselyclose}
Let $X$ be a set and $(Y,d)$ a metric space. Two functions $f,g:X \to Y$ are {\bf coarsely close} if there exists $C>0$ such that for all $x\in X,$
\[d(f(x),g(x)) < C.\]
\end{definition}

\section{Metric Coarse Proximity}\label{mainbody}

In this section, we define a relation on the power set of a metric space. This relation captures the "closeness at infinity" of subsets of a metric space. We also prove several properties of this relation.

\begin{definition}
Let $(X,d)$ be a metric space. Let $A$ and $B$ be subsets of $X.$ Then the distance between $A$ and $B$ is defined by
\[d(A,B):=\inf\{ d(x,y) \mid x \in A, y \in B \},\]
and the \textbf{Hausdorff distance} between $A$ and $B$ is defined by
\[d_{H}(A,B)=\inf\{\epsilon \mid A\subseteq \mathscr{B}(B,\epsilon)\text{ and }B\subseteq \mathscr{B}(A,\epsilon)\},\]
where $\mathscr{B}(A,\epsilon)$ is the open ball of radius $\epsilon$ about $A$ (i.e., $\mathscr{B}(A,\epsilon)=\bigcup_{x\in A}\mathscr{B}(x,\epsilon)$) and $\mathscr{B}(B,\epsilon)$ is the open ball of radius $\epsilon$ about $B.$

\end{definition}

\begin{remark}
Recall that by convention the infimum of the empty set is $\infty$. Thus, if either $A$ or $B$ is the empty set, then $d(A,B)= \infty.$
\end{remark}

\begin{definition}\label{metriccoarseproximity}
Let $(X,d)$ be a metric space. Let $A$ and $B$ be subsets of $X.$ We say that $A$ and $B$ are \textbf{coarsely close}, denoted $A {\bf b} B$, if there exists $\epsilon < \infty$ such that for all bounded sets $D$, $d(A \setminus D, B\setminus D)< \epsilon.$
\end{definition}

\begin{remark}
If $(X,d)$ is a metric space and $A$ is a bounded subset of $X$, then $A$ is not coarsely close to any subset of $X.$ Consequently, if $X$ is bounded, the relation is empty.
\end{remark}

\begin{proposition}
Let $(X,d)$ be a metric space. Let $A$ and $B$ be subsets of $X.$ Then the following are equivalent:
\begin{enumerate}
\item there exists $\epsilon < \infty$ such that for all bounded sets $D$, $d(A \setminus D, B\setminus D)< \epsilon,$
\item there exists $\epsilon < \infty$ such that for all bounded sets $D$, there exists $a \in (A \setminus D)$ and $b \in (B \setminus D)$ such that $d(a,b) < \epsilon,$
\item there exist unbounded sets $A_{1}\subseteq A,\,B_{1}\subseteq B$ such that $d_{H}(A_{1},B_{1})<\infty.$
\end{enumerate}
\end{proposition}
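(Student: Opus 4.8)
The plan is to prove the three statements equivalent by establishing the cycle $(1) \Rightarrow (2) \Rightarrow (3) \Rightarrow (1)$. The first implication, $(1) \Rightarrow (2)$, should be essentially immediate: if $d(A \setminus D, B \setminus D) < \epsilon$ for every bounded $D$, then by the definition of distance as an infimum, the set $\{d(a,b) \mid a \in A \setminus D, b \in B \setminus D\}$ has infimum strictly below $\epsilon$, so I can pick a specific pair $(a,b)$ realizing a value below $\epsilon$. One minor care point: I should verify the infimum being $< \epsilon$ actually yields a witness below $\epsilon$ rather than merely below $\epsilon + \text{something}$, but shrinking $\epsilon$ slightly (or using the strict inequality) handles this cleanly.

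For $(2) \Rightarrow (3)$, the idea is to build the unbounded subsets $A_1 \subseteq A$ and $B_1 \subseteq B$ by an inductive exhaustion argument. Fix a basepoint $x_0 \in X$ and let $D_n = \mathscr{B}(x_0, n)$ be the bounded ball of radius $n$. Using hypothesis (2) with the bounded set $D_n$, I can extract a point $a_n \in A \setminus D_n$ and $b_n \in B \setminus D_n$ with $d(a_n, b_n) < \epsilon$. Setting $A_1 = \{a_n \mid n \in \mathbb{N}\}$ and $B_1 = \{b_n \mid n \in \mathbb{N}\}$, both sets are unbounded since $a_n, b_n \notin \mathscr{B}(x_0, n)$ forces $d(x_0, a_n), d(x_0, b_n) \geq n$. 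The Hausdorff distance bound then follows because each $a_n$ is within $\epsilon$ of $b_n \in B_1$ and vice versa, giving $d_H(A_1, B_1) \leq \epsilon < \infty$. I would note that one may need to pass to a subsequence or be slightly careful so that the chosen points are genuinely escaping to infinity, but choosing $a_n$ outside an exhausting sequence of bounded sets guarantees this.

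The implication $(3) \Rightarrow (1)$ is where the main subtlety lies, since I must recover a bound on $d(A \setminus D, B \setminus D)$ for \emph{all} bounded $D$ from information only about the subsets $A_1, B_1$. Let $\epsilon' = d_H(A_1, B_1) < \infty$ and fix any bounded set $D$. Since $A_1$ is unbounded, $A_1 \setminus D$ is nonempty (indeed unbounded), so I can pick $a \in A_1 \setminus D \subseteq A \setminus D$. By the Hausdorff bound there exists $b \in B_1$ with $d(a,b) < \epsilon' + 1$ (or any fixed slack), but the obstacle is that this $b$ need not lie outside $D$. To circumvent this, I should instead choose $a \in A_1 \setminus \mathscr{B}(D, \epsilon' + 1)$, which is still possible because $A_1$ is unbounded and $\mathscr{B}(D, \epsilon' + 1)$ is bounded; then any $b \in B_1$ with $d(a,b) < \epsilon' + 1$ automatically satisfies $b \notin D$, so $b \in B \setminus D$ and $d(A \setminus D, B \setminus D) \leq d(a,b) < \epsilon' + 1 =: \epsilon$. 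This uniform $\epsilon$ works for every $D$, completing the cycle. The key maneuver throughout is exploiting that enlarging a bounded set by a fixed radius keeps it bounded, which lets me push chosen points far enough away that their nearby partners also escape any prescribed bounded set.
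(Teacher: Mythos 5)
Your proof is correct, and in fact the paper offers no proof at all here --- the proposition is left as an ``Exercise'' --- so your cyclic argument $(1)\Rightarrow(2)\Rightarrow(3)\Rightarrow(1)$ stands on its own as a complete solution. Two of the care points you flag are non-issues: in $(1)\Rightarrow(2)$ no shrinking of $\epsilon$ is needed, since $d(A\setminus D, B\setminus D)<\epsilon$ means $\epsilon$ is not a lower bound of the (necessarily nonempty) set of pairwise distances, which directly yields a pair with $d(a,b)<\epsilon$; and in $(2)\Rightarrow(3)$ no subsequence extraction is needed, because your choice $a_n,b_n\notin\mathscr{B}(x_0,n)$ already forces $d(x_0,a_n),d(x_0,b_n)\geq n$, so $A_1$ and $B_1$ are unbounded as constructed. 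The genuinely delicate step, $(3)\Rightarrow(1)$, you handle with exactly the right maneuver: choosing $a\in A_1\setminus\mathscr{B}(D,\epsilon'+1)$ (possible since $A_1$ is unbounded while $\mathscr{B}(D,\epsilon'+1)$ is bounded) guarantees that any $b\in B_1$ with $d(a,b)<\epsilon'+1$ --- which exists because $d_H(A_1,B_1)=\epsilon'<\epsilon'+1$ --- automatically lies outside $D$, giving the uniform bound $\epsilon=\epsilon'+1$ for every bounded $D$. This is the same ``enlarge the bounded set by a fixed radius'' idea the paper itself uses repeatedly in later arguments (e.g., in the proof of Theorem \ref{theorem1} and Lemma \ref{easierdisjointness}), so your proof is fully in the spirit of the paper.
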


\begin{proof}
Exercise.
\end{proof}
 
 For the remainder of the paper, all of the equivalent conditions for coarse closeness in a metric space will be used interchangeably without explicit mention. The reader is encouraged to compare the equivalent conditions from the above proposition with the notion of asymptotic disjointness given in \cite{Honari}, \cite{Bell}, or Definition \ref{asymptoticdisjointness}.

\begin{theorem}\label{theorem1}
Let $(X,d)$ be a metric space. Let $A, B$ and $C$ be subsets of $X.$ Then the following are true:
\begin{enumerate}[(i)]
	\item $A{\bf b}B$ implies $B{\bf b}A,$
	\item $(A \cup B){\bf b}C$ if and only if $A{\bf b}C$ or $B{\bf b}C,$
	\item $A{\bf b}B$ implies $A$ is unbounded and $B$ is unbounded,
	\item $A\bar{\bf b}B$ implies that there exists a set $E$ such that $A\bar{\bf b}E$ and $(X\setminus E)\bar{\bf b}B$.
	\item $A\cap B$ not bounded implies $A {\bf b} B,$
\end{enumerate}
where $A\bar{ {\bf b}}B$ means "$A{\bf b} B$" is not true.
\end{theorem}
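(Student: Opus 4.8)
The plan is to verify the five properties in increasing order of difficulty, freely using the three equivalent formulations of coarse closeness from the preceding proposition. Properties (i), (iii), and (v) are essentially immediate. Symmetry (i) follows from the symmetry of $d$, since $d(A\setminus D, B\setminus D)=d(B\setminus D, A\setminus D)$ for every bounded $D$. For (iii), formulation (3) produces an unbounded $A_{1}\subseteq A$, so $A$ is unbounded, and likewise for $B$. For (v), if $A\cap B$ is unbounded I would take $A_{1}=B_{1}=A\cap B$ in formulation (3), noting $d_{H}(A_{1},B_{1})=0$; equivalently, the diagonal witnesses formulation (2) with $\epsilon=1$.

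For the union axiom (ii), one direction is monotonicity: since $A\setminus D\subseteq(A\cup B)\setminus D$, removing the same bounded set can only shrink the infimum distance, so $A\mathbf{b}C$ forces $(A\cup B)\mathbf{b}C$, and symmetrically for $B$. For the converse I would argue contrapositively using the negation of formulation (1): assuming $A\bar{\mathbf{b}}C$ and $B\bar{\mathbf{b}}C$, fix $\epsilon<\infty$ and choose bounded sets $D_{1},D_{2}$ with $d(A\setminus D_{1},C\setminus D_{1})\ge\epsilon$ and $d(B\setminus D_{2},C\setminus D_{2})\ge\epsilon$. Setting $D=D_{1}\cup D_{2}$ (still bounded), monotonicity in the removed set and the elementary identity $d(U\cup V,W)=\min\{d(U,W),d(V,W)\}$, together with $(A\cup B)\setminus D=(A\setminus D)\cup(B\setminus D)$, give $d((A\cup B)\setminus D,C\setminus D)\ge\epsilon$. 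As $\epsilon$ was arbitrary, $(A\cup B)\bar{\mathbf{b}}C$.

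The strong axiom (iv) is the heart of the theorem and the step I expect to be hardest. Imitating the Urysohn-type separating set from the classical metric proximity, I would take
\[E=\{x\in X\mid d(x,A)\ge d(x,B)\},\]
the set of points at least as close to $B$ as to $A$, and prove the contrapositives $A\mathbf{b}E\Rightarrow A\mathbf{b}B$ and $(X\setminus E)\mathbf{b}B\Rightarrow A\mathbf{b}B$. Consider the first. If $A\mathbf{b}E$, then formulation (2) supplies an $\epsilon<\infty$; given an arbitrary bounded $D'$, I would apply the witnessing condition to the enlarged bounded set $D=\mathscr{B}(D',\epsilon)$ to obtain $a\in A\setminus D$ and $y\in E\setminus D$ with $d(a,y)<\epsilon$. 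Since $y\in E$ and $a\in A$ give $d(y,B)\le d(y,A)\le d(y,a)<\epsilon$, there is $b\in B$ with $d(y,b)<\epsilon$, whence $d(a,b)<2\epsilon$. The purpose of enlarging $D'$ to $\mathscr{B}(D',\epsilon)$ is exactly to push the produced points outside $D'$: from $y\notin\mathscr{B}(D',\epsilon)$ we get $d(y,D')\ge\epsilon$, hence $d(b,D')>0$ and $b\notin D'$, while $a\notin D\supseteq D'$ directly. Thus $\epsilon'=2\epsilon$ witnesses $A\mathbf{b}B$ via formulation (2). The case $(X\setminus E)\mathbf{b}B$ is symmetric: a point $x\in X\setminus E$ satisfies $d(x,A)<d(x,B)$, so closeness of $X\setminus E$ to $B$ propagates to closeness of $A$ to $B$ by the same triangle-inequality estimate and the same enlargement trick.

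The only real subtlety is this bookkeeping with bounded sets: because the bounded set in the definition is universally quantified and may depend on $\epsilon$, the points produced by a coarse-closeness hypothesis must be manually forced outside the given $D'$, which the neighborhood enlargement $\mathscr{B}(D',\epsilon)$ accomplishes. Degenerate cases (one of the sets empty or bounded) need no separate treatment, since the contrapositive arguments conclude $A\mathbf{b}B$, which already fails under the hypothesis $A\bar{\mathbf{b}}B$; and whenever a coarse-closeness hypothesis is invoked it supplies precisely the points needed to run the triangle inequalities.
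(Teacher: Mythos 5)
Your proposal is correct, and for the strong axiom (iv) it takes a genuinely different route from the paper's actual proof. The paper constructs the separating set $E$ explicitly as a union of annuli $E_{n}=\mathscr{B}(B,n)\setminus\mathscr{B}(x_{0},r_{n})$, where the radii $r_{n}$ come from bounded sets witnessing $d(A\setminus D_{n},B\setminus D_{n})>n^{2}$, and then runs a careful bookkeeping argument with a maximal index $k$. Your set $E=\{x\in X\mid d(x,A)\geq d(x,B)\}$ is precisely the alternative construction that the paper mentions in the remark following the theorem (attributed to Thomas Weighill) and leaves as an exercise --- so you have, in effect, carried out that exercise, and your verification is sound: the enlargement $D=\mathscr{B}(D',\epsilon)$ correctly forces the produced points outside the given bounded set, and proving the two contrapositives handles the degenerate cases uniformly. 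One cosmetic slip: from $d(y,D')\geq\epsilon$ and $d(y,b)<\epsilon$ you get $d(b,x)>0$ for every $x\in D'$, which yields $b\notin D'$, but the infimum $d(b,D')$ itself need not be positive; only the pointwise conclusion is needed. As for what each approach buys: yours is shorter, Urysohn-like, and free of radius bookkeeping; the paper's heavier annular construction is deliberate, since those sets $E$ are exactly the coarse neighborhoods $U(B,f)$ of Definition \ref{importantdefinition}, and the paper reuses them in Section \ref{hyperspace} --- for instance, Proposition \ref{metricbase} invokes the set $E$ from Theorem \ref{theorem1} to show that the $U(B,f)$ form a coarse neighborhood base, which underlies the proximity at infinity. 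Your treatments of (i), (iii), (v), and the union axiom (ii) match the paper's in substance (the paper declares the first three clear and proves (ii) by the same bounded-union argument, phrased as a contradiction rather than your cleaner quantifier-complete contrapositive).
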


\begin{proof}
Properties (i), (iii), and (v) are clear. We will show (ii) and (iv).

The backward direction of (ii) is trivial. To show the forward direction, assume $(A \cup B){\bf b}C$ and for contradiction assume that $A\bar{{\bf b}}C$ and $B\bar{{\bf b}}C.$ Since $(A \cup B){\bf b}C,$ there exists $\epsilon < \infty$ such that for all bounded sets $D,$
\[d((A\cup B) \setminus D, C \setminus D)< \epsilon.\]
Since $A\bar{{\bf b}}C$ and $B\bar{{\bf b}}C$, there exist bounded sets $D_1$ and $D_2$ such that
\[d(A \setminus D_1, C \setminus D_1)> \epsilon \quad \text{ and } \quad d(B \setminus D_2, C \setminus D_2)> \epsilon.\]
Let $D:= D_1 \cup D_2.$ Then notice that $D$ is bounded and
\[d(A \setminus D, C \setminus D)> \epsilon \quad \text{ and } \quad d(B \setminus D, C \setminus D)> \epsilon,\]
which implies that 
\[d((A\cup B) \setminus D, C \setminus D) > \epsilon,\]
a contradiction.

To prove (iv), notice that if $A$ is bounded, then the set $E:=(X\setminus A)$ has the desired properties. If $B$ is bounded, then $E:=B$ has the desired properties. Thus, assume that both $A$ and $B$ are unbounded and $A\bar{\bf b}B$. Then for every $n\in\mathbb{N}$ there is a bounded set $D_n \subseteq X$ such that $d(A\setminus D_n,B\setminus D_n)>n^2.$ Fix some $x_0 \in X.$ Since any bounded set is contained in some large ball centered at $x_0$, without loss of generality assume that each $D_n$ is a ball centered at $x_0$ with radius $r_n,$ i.e., $D_n=\mathscr{B}(x_0, r_n).$ Additionally, one can assume that the radii are strictly increasing as $n \to \infty$ and that they take integer values. We can even assume that for each $n,$ we have $r_n-r_{n-1}>n+1.$ 

For each $n,$ define
\begin{equation*}
\begin{split}
E_{0} & :=B,\\
E_{n} & := \mathscr{B}(B, n) \setminus \mathscr{B}(x_0, r_n),\\
E & :=\bigcup_{n \geq 0} E_{n}.
\end{split}
\end{equation*}
Notice that this definition implies that $d(X \setminus D_n,D_{n-1})>n$ for all $n>1.$ Notice that $E$ is unbounded, since $B \subseteq E$. We will show that that $A\bar{\bf b}E$ and $(X\setminus E)\bar{\bf b}B.$ 

\begin{figure}
\caption{Construction of $E$}
\includegraphics[scale=0.3]{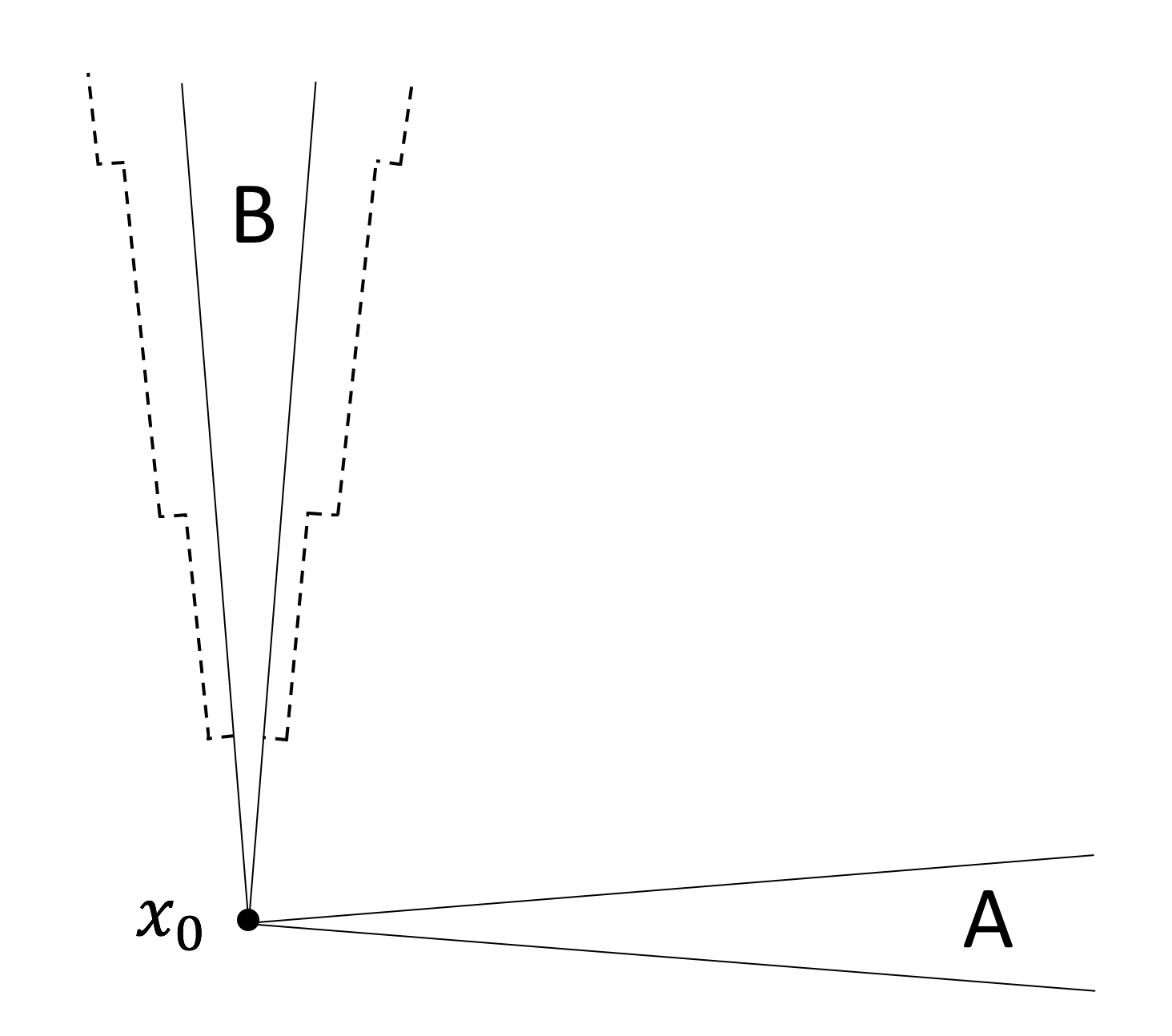}
\centering
\label{Coarseneighborhood}
\end{figure}

First assume that $A{\bf b}E.$ Then there exists $\epsilon < \infty$ such that for all $n \in \mathbb{N},$ there exists $x_n \in E \setminus D_n$ and $a_n \in A \setminus D_n$ such that $d(x_n,a_n)<\epsilon.$  Find $n$ so large that it satisfies the following inequalities: 
\begin{equation*}
\begin{split}
n & >1,\\
n & > \epsilon, \\
(n-1)^2 & > n+\epsilon.
\end{split}
\end{equation*}
Notice that the above inequalities are satisfied for any $k\geq n.$ Let $k$ be the largest integer such that $x_n \notin D_k$. Clearly $k \geq n$ and $x_n \in D_{k+1}.$ Consequently, $x_n \in D_{m}$ for all $m \geq k+1.$ This implies that $x_n \notin E_m$ for all $m \geq k+1.$ Therefore, since $x_n \in E,$ there exists $b \in B$ such that $d(b,x_n)<k.$ Notice that this also implies that $b \notin D_{k-1}$ (because $x_n \in (X \setminus D_k)$ and $d(X \setminus D_k,D_{k-1})>k).$ Similarly, $d(x_n,a_n)< \epsilon $ implies that $a_n \notin D_{k-1}$ (because $d(X \setminus D_k,D_{k-1})>k>\epsilon).$ Thus we have $a_n \in (A \setminus D_{k-1}), b \in (B \setminus D_{k-1}),$ and 
 \[d(b, a_n) \leq d(b, x_n)+d(x_n, a_n)< k + \epsilon<(k-1)^2.\]
 contradicting $d(A\setminus D_{k-1},B\setminus D_{k-1})>(k-1)^2.$ Thus, it has to be that $A\bar{\bf b}E.$

 To show that $(X\setminus E)\bar{\bf b}B,$ for contradiction assume that $(X\setminus E){\bf b}B.$ Then there exists $\epsilon < \infty$ such that for all $n \in \mathbb{N},$ there exists $x_n \in (X \setminus E) \setminus \mathscr{B}(x_0, r_n)$ and $b_n \in B \setminus \mathscr{B}(x_0, r_n)$ such that $d(x_n,b_n)<\epsilon.$ Choose $n$ large so that $\epsilon < n.$ Then $x_n \notin \mathscr{B}(x_0, r_n)$ and $d(x_n, b_n) < \epsilon < n.$ In other words, $x_n \in E_n$, contradiciting the fact that $x_n \notin E.$ Therefore, it has to be that $(X\setminus E)\bar{\bf b}B.$
\end{proof}

\begin{remark}
The specific construction of $E$ in the above proof will be utilized in section \ref{hyperspace}. However, to prove the strong axiom, one could choose $E:= \{x \in X \mid d(x, B) \leq d(x,A) \}$ \footnote{The authors are grateful to Thomas Weighill for suggesting this alternative construction.}. We leave the verification of this fact as an exercise for the reader.
\end{remark}

\begin{remark}\label{remark2}
One can easily show that if $B$ is unbounded, then the set $E$ from axiom (iv) has to be unbounded as well.
\end{remark}

\begin{remark}
Notice that, unlike asymptotic resemblance (for the definition, see \cite{Honari} or definition \ref{asymptoticresemblancedefinition}), the relation in Definition \ref{metriccoarseproximity} is not an equivalence relation. To see that, let $X= \mathbb{R}^2,$ $A$ the positive $x$-axis, $B$ the first quadrant, and $C$ the positive $y$-axis. Then $A {\bf b} B,$ $B {\bf b} C,$ but $A \bar{{\bf b}} C.$
\end{remark}

\section{Coarse Proximity Spaces}\label{coarseproximityspaces}

In this section, we generalize the coarse proximity relation from section \ref{mainbody} to an arbitrary set $X.$ We also explore several properties of this relation.

In order to generalize the coarse proximity relation from section \ref{mainbody} to an arbitrary set $X,$ we need to have a notion of a subset being "bounded." Thus, we recall the following definition:

\begin{definition}
	A {\bf bornology} $\mathcal{B}$ on a set $X$ is a family of subsets of $X$ satisfying:
	\begin{enumerate}[(i)]
		\item $\{x\}\in\mathcal{B}$ for all $x\in X,$
		\item $A\in\mathcal{B}$ and $B\subseteq A$ implies $B\in\mathcal{B},$
		\item If $A,B\in\mathcal{B},$ then $A\cup B\in\mathcal{B}.$
	\end{enumerate}
Elements of $\mathcal{B}$ are called {\bf bounded} and subsets of $X$ not in $\mathcal{B}$ are called {\bf unbounded}. If $X \notin \mathcal{B},$ then we call the bornology \textbf{proper}.
\end{definition}

\begin{definition}
Let $X$ be a set equipped with a bornology $\mathcal{B}$. A \textbf{coarse proximity} on a set $X$ is a relation ${\bf b}$ on the power set of $X$ satisfying the following axioms for all $A,B,C \subseteq X:$

\begin{enumerate}[(i)]
	\item $A{\bf b}B$ implies $B{\bf b}A,$
	\item $(A \cup B){\bf b}C$ if and only if $A{\bf b}C$ or $B{\bf b}C,$
	\item $A{\bf b}B$ implies $A \notin \mathcal{B}$ and $B \notin \mathcal{B},$
	\item $A\bar{\bf b}B$ implies that there exists a subset $E$ such that $A\bar{\bf b}E$ and $(X\setminus E)\bar{\bf b}B,$
	\item $A\cap B \notin \mathcal{B}$ implies $A {\bf b} B.$
\end{enumerate}
where $A\bar{ {\bf b}}B$ means "$A{\bf b} B$ is not true." If $A {\bf b} B$, then we say that $A$ is \textbf{coarsely close} to (or \textbf{coarsely near}) $B.$ Axiom (iv) will be called the \textbf{strong axiom}. A triple $(X,\mathcal{B},{\bf b})$ where $X$ is a set, $\mathcal{B}$ is a bornology on $X$, and ${\bf b}$ is a coarse proximity relation on $X,$ is called a {\bf coarse proximity space}.
\end{definition}

The reader is encouraged to compare the above axioms with the axioms of a (small-scale) proximity given in definition \ref{proximity}. For the remainder of the paper, the use of axiom (i) will not be explicitly mentioned.

\begin{example}
Let $(X,d)$ be a metric space, $\mathcal{B}_{d}$ the collection of all bounded sets of $X$ with respect to the metric $d$, and ${\bf b}_d$ the relation defined in \ref{metriccoarseproximity}. Then by theorem \ref{theorem1}, this relation is a coarse proximity on $X$. We call this relation the \textbf{metric coarse proximity} and the associated space $(X, \mathcal{B}_{d}, {\bf b}_d)$  the \textbf{metric coarse proximity space}.
\end{example}

\begin{example}
Let $X$ be a set with any bornology $\mathcal{B}.$ For any subsets $A$ and $B$ of $X,$ define 
\[A{\bf b}B \quad \text{ if } \quad A \cap B \notin \mathcal{B}.\]
Then this relation is a coarse proximity on $X,$ called the \textbf{discrete coarse proximity}.
\end{example}
\begin{proof}
All the axioms are clear besides axiom (iv). To show axiom (iv), set $E=B.$
\end{proof}

\begin{example}
Let $X$ be a set with any bornology $\mathcal{B}.$ For any subsets $A$ and $B$ of $X,$ define 
\[A{\bf b}B \quad \text{ if } \quad A,B  \notin \mathcal{B}.\]
Then this relation is a coarse proximity on $X,$ called the \textbf{indiscrete coarse proximity}.
\end{example}
\begin{proof}
All the axioms are clear besides axiom (iv). To show axiom (iv), assume $A \bar{\bf b} B.$ If $A \in \mathcal{B},$ let $E=X \setminus A.$ If $B \in \mathcal{B},$ let $E=B.$
\end{proof}

\begin{remark}
Notice that if $\mathcal{B}$ is not a proper bornology on a set $X$ (i.e. there are no unbounded sets), then the coarse proximity relation is empty.
\end{remark}

\begin{remark}\label{remark1}
 If $A\bar{\bf b}B$, then the set $E$ from the strong axiom contains $B$ up to some bounded set, i.e. $B \setminus E \in \mathcal{B}$. For if that is not the case, then $B  \cap (X \setminus E) \notin \mathcal{B},$ which by axiom (v) implies that $(X\setminus E){\bf b}B,$ a contradiction to the definition of $E.$ In particular, if $B \notin \mathcal{B},$ then $E$ has to be unbounded.
\end{remark}

\begin{lemma}\label{lemma1}
Let $(X,\mathcal{B},{\bf b})$ be a coarse proximity space. Let $A, B, C,$ and $D$ be subsets of $X.$ If $A \subseteq C, B \subseteq D,$ and $A {\bf b} B$, then $C {\bm b}D.$ In particular, $X$ is coarsely near every unbounded subset.
\end{lemma}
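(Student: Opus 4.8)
The plan is to prove the monotonicity statement by reducing it to the union axiom (ii), which already encodes how the relation interacts with enlarging one coordinate. First I would observe that since $A \subseteq C$, we can write $C = A \cup (C \setminus A)$, and similarly $D = B \cup (D \setminus B)$. The strategy is to push the enlargement through one coordinate at a time, using axiom (ii) in its ``backward'' (easy) direction each time.

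For the first coordinate, I would argue as follows. Since $A \, {\bf b} \, B$, axiom (ii) applied to the decomposition $C = A \cup (C \setminus A)$ gives immediately that $(A \cup (C\setminus A)) \, {\bf b}\, B$, i.e. $C \, {\bf b}\, B$, because the union is coarsely near $B$ whenever at least one of the pieces is. Then I would repeat the same move on the second coordinate: by symmetry (axiom (i)) we have $B \,{\bf b}\, C$, and the decomposition $D = B \cup (D \setminus B)$ together with axiom (ii) yields $D \,{\bf b}\, C$. Applying axiom (i) one final time gives $C \,{\bf b}\, D$, as desired. No appeal to the strong axiom is needed; the whole argument is just two applications of the trivial direction of the union axiom interleaved with symmetry.

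For the final sentence, I would specialize: if $B$ is any unbounded subset of $X$, then $B \cap B = B \notin \mathcal{B}$, so by axiom (v) we have $B \,{\bf b}\, B$. Taking $A = B$, $C = X$, and $D = B$ in the monotonicity statement just proved (noting $B \subseteq X$ and $B \subseteq B$), we conclude $X \,{\bf b}\, B$. Hence $X$ is coarsely near every unbounded subset.

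I do not anticipate any genuine obstacle here, since the statement is a direct formal consequence of the axioms; the only thing to be careful about is tracking which coordinate is being enlarged and invoking symmetry at the right moments so that axiom (ii) applies to the correct side of the relation.
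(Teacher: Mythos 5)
Your proof is correct and takes essentially the same route as the paper: the paper also applies the trivial direction of the union axiom twice, merely writing $C = A \cup C$ instead of your $C = A \cup (C \setminus A)$, which is a cosmetic difference. Your justification of the final sentence via axiom (v) applied to $B \cap B = B \notin \mathcal{B}$ is exactly the intended specialization.
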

\begin{proof}
Notice that $A \cup C = C.$ Thus, by axiom (ii), $C {\bf b} B.$ Since $B \cup D = D,$ axiom (ii) implies that $C{ \bf b}D.$
\end{proof}

\begin{remark}\label{remark11}
The above lemma implies that if $A \subseteq C, B \subseteq D,$ and $C \bar{{\bm b}}D,$ then $A \bar{{\bm b}}B.$
\end{remark}

\begin{proposition}\label{proposition2}
Let $(X,\mathcal{B},{\bf b})$ be a coarse proximity space. Let $A$ and $B$ be subsets of $X.$ Then $A{\bf b}B$ if and only if for all $D_1, D_2 \in \mathcal{B},$ $(A\setminus D_1){\bf b}(B\setminus D_2),$
\end{proposition}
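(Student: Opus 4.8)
The plan is to prove the two directions separately, with the backward implication essentially immediate and the forward implication resting on the union axiom (ii) together with axiom (iii). For the backward direction, I would first note that $\emptyset$ is bounded: since $\{x\} \in \mathcal{B}$ for any $x \in X$ and bornologies are closed under taking subsets, $\emptyset \subseteq \{x\}$ gives $\emptyset \in \mathcal{B}$. Thus $D_1 = D_2 = \emptyset$ is an admissible choice in the hypothesis, and it yields $(A \setminus \emptyset)\,{\bf b}\,(B \setminus \emptyset)$, which is exactly $A\,{\bf b}\,B$.

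For the forward direction, suppose $A\,{\bf b}\,B$ and fix arbitrary $D_1, D_2 \in \mathcal{B}$. The key device is the decomposition $A = (A \setminus D_1) \cup (A \cap D_1)$. Since $A \cap D_1 \subseteq D_1$ is bounded, the contrapositive of axiom (iii) gives $(A \cap D_1)\,\bar{\bf b}\,B$, i.e. a bounded set is coarsely near nothing. Applying the union axiom (ii) to $((A \setminus D_1) \cup (A \cap D_1))\,{\bf b}\,B$ then forces $(A \setminus D_1)\,{\bf b}\,B$, since the $(A \cap D_1)$ summand cannot be responsible for the closeness.

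I would then repeat the argument on the second coordinate: write $B = (B \setminus D_2) \cup (B \cap D_2)$, observe that $(A \setminus D_1)\,\bar{\bf b}\,(B \cap D_2)$ by the same use of axiom (iii) (as $B \cap D_2$ is bounded), and apply the union axiom—via symmetry, axiom (i)—to $(A \setminus D_1)\,{\bf b}\,B$ to conclude $(A \setminus D_1)\,{\bf b}\,(B \setminus D_2)$. Since $D_1, D_2 \in \mathcal{B}$ were arbitrary, this establishes the forward implication.

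There is no substantial obstacle here; the only point demanding care is the bookkeeping of which set is bounded at each stage, so that axiom (iii) may be invoked to discard the bounded piece immediately after splitting off via the union axiom. This is the natural large-scale analog of the small-scale fact that the relation $A\,\delta\,B$ is insensitive to altering $A$ or $B$ by negligible pieces, with \emph{bounded} here playing the role of \emph{negligible}.
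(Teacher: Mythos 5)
Your proof is correct and takes essentially the same route as the paper's: the forward direction in both cases splits off the bounded pieces and discards them via axiom (iii) together with the union axiom (the paper merely phrases this contrapositively), and your backward direction via $D_1=D_2=\emptyset$ is a trivial variant of the paper's appeal to Lemma \ref{lemma1} (monotonicity of ${\bf b}$). No gaps; nothing further is needed.
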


\begin{proof}
The converse direction follows from Lemma \ref{lemma1}. To prove the forward direction, assume $A{\bf b}B$ and let $D_1, D_2 \in \mathcal{B}$ be arbitrary. For contradiction, assume $(A\setminus D_1)\bar{{\bf b}}(B\setminus D_2),$ Then notice that since $D_1$ is bounded, $D_1 \bar{\bf b}(B\setminus D_2),$ so by axiom (ii), $A\bar{\bf b}(B\setminus D_2)$. Similarly, $A\bar{{\bf b}} D_2,$ which again by axiom (ii) gives us $A\bar{{\bf b}}B$, a contradiction. Thus, $(A\setminus D_1)\bar{{\bf b}}(B\setminus D_2).$
\end{proof}

\begin{remark}
Notice that the property from proposition \ref{proposition2} is a large scale equivalent of the trivial property of a small scale proximity, namely
\[A\delta B \text{ if and only if } (B \setminus \emptyset) \delta (A \setminus \emptyset).\]
\end{remark}

\begin{proposition}\label{converse_of_strong_axiom}
Let $(X,\mathcal{B},{\bf b})$ be a coarse proximity space. Let $A$ and $B$ be subsets of $X$. Then the converse of the strong axiom holds, i.e., if there exists $E \subseteq X$ such that $A\bar{\bf b}E$ and $(X\setminus E)\bar{\bf b}B,$ then $A\bar{\bf b}B.$
\end{proposition}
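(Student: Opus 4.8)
The plan is to prove the contrapositive: assuming $A \mathbf{b} B$, I will show that no set $E$ can satisfy both $A \bar{\mathbf{b}} E$ and $(X \setminus E) \bar{\mathbf{b}} B$ simultaneously. So suppose toward a contradiction that $A \mathbf{b} B$ holds yet there exists some $E \subseteq X$ with $A \bar{\mathbf{b}} E$ and $(X \setminus E) \bar{\mathbf{b}} B$. The key observation is that $B = (B \cap E) \cup (B \setminus E)$, and $B \setminus E \subseteq X \setminus E$. By Remark~\ref{remark11} (the monotonicity consequence of Lemma~\ref{lemma1}), since $(X \setminus E) \bar{\mathbf{b}} B$ and $B \setminus E \subseteq X \setminus E$ and $B \subseteq B$, we get $(B \setminus E) \bar{\mathbf{b}} B$; in particular $(B\setminus E)\bar{\mathbf{b}}A$.

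First I would use axiom~(ii), the union axiom, to decompose the hypothesis $A \mathbf{b} B$. Writing $B = (B \cap E) \cup (B \setminus E)$, axiom~(ii) tells us that $A \mathbf{b} B$ holds if and only if $A \mathbf{b} (B \cap E)$ or $A \mathbf{b} (B \setminus E)$. The second disjunct is ruled out by the previous paragraph, so we must have $A \mathbf{b} (B \cap E)$. Now $B \cap E \subseteq E$, so by Remark~\ref{remark11} applied with $A \bar{\mathbf{b}} E$ (taking $A \subseteq A$ and $B \cap E \subseteq E$), we obtain $A \bar{\mathbf{b}} (B \cap E)$, which directly contradicts $A \mathbf{b} (B \cap E)$.

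The argument is essentially a clean bookkeeping exercise once the right decomposition of $B$ is chosen, so I do not expect any single step to be a serious obstacle. The main thing to get right is the choice to split $B$ along the partition induced by $E$ rather than splitting $A$ or doing something more symmetric; splitting $B$ into its part inside $E$ and its part outside $E$ is exactly what makes both the downward-closure facts (via Remark~\ref{remark11}) apply cleanly, with $B \cap E$ being controlled by $A \bar{\mathbf{b}} E$ and $B \setminus E$ being controlled by $(X \setminus E) \bar{\mathbf{b}} B$. I would double-check the two applications of Remark~\ref{remark11} to make sure the containments are oriented correctly, since that remark is the workhorse here and a reversed inclusion would break the proof.
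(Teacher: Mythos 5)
Your decomposition $B=(B\cap E)\cup(B\setminus E)$ is the right one, and your treatment of the first piece is correct: from $A\bar{\bf b}E$ and $B\cap E\subseteq E$, Remark \ref{remark11} does give $A\bar{\bf b}(B\cap E)$. The gap is in the other piece, at the step ``we get $(B\setminus E)\bar{\bf b}B$; in particular $(B\setminus E)\bar{\bf b}A$.'' That inference is a non sequitur: $(B\setminus E)\bar{\bf b}B$ is a statement about the pair $(B\setminus E,B)$, and nothing entitles you to replace $B$ by $A$ on one side --- Remark \ref{remark11} only allows passing to \emph{subsets} of the two related sets, and $A\not\subseteq B$ in general. (Indeed, under your standing assumption $A{\bf b}B$, there is no abstract principle saying a set not coarsely close to $B$ is not coarsely close to $A$; that is exactly the kind of ``transitivity at infinity'' that coarse proximities do not satisfy.) So as written, the second disjunct $A{\bf b}(B\setminus E)$ coming out of the union axiom has not been ruled out, and this is precisely the crux of the proposition.

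The repair is one line, and it is the idea your write-up is missing: axiom (v). From $(X\setminus E)\bar{\bf b}B$, if $B\setminus E=B\cap(X\setminus E)$ were unbounded, axiom (v) would force $(X\setminus E){\bf b}B$; hence $B\setminus E\in\mathcal{B}$ (this is Remark \ref{remark1} of the paper). Equivalently, from your correct intermediate fact $(B\setminus E)\bar{\bf b}B$ together with $B\setminus E\subseteq B$, Remark \ref{remark11} gives $(B\setminus E)\bar{\bf b}(B\setminus E)$, and axiom (v) again forces $B\setminus E\in\mathcal{B}$. Either way, axiom (iii) then yields $A\bar{\bf b}(B\setminus E)$, since no set is coarsely close to a bounded set, and your union-axiom argument closes as intended. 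With this substitution your proof becomes essentially the paper's proof (stated contrapositively): the paper sets $D:=B\setminus E$, notes $D\in\mathcal{B}$ by Remark \ref{remark1}, gets $A\bar{\bf b}(B\setminus D)$ from $A\bar{\bf b}E$ and Lemma \ref{lemma1}, gets $A\bar{\bf b}D$ from axiom (iii), and concludes $A\bar{\bf b}B$ by the union axiom. Your instinct to double-check the Remark \ref{remark11} applications was sound; the failure is just that the flagged step is not an application of that remark at all.
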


\begin{proof}
Assume that there exists $E \subseteq X$ such that $A\bar{\bf b}E$ and $(X\setminus E)\bar{\bf b}B.$ By the proof of remark \ref{remark1}, there exists a bounded set $D$ such that $(B \setminus D) \subseteq E.$ By lemma \ref{lemma1}, this implies that $A \bar{\bf b} (B \setminus D).$ Also, by axiom (iii) we have that $A\bar{\bf b} D$, and thus by axiom (ii), $A\bar{\bf b}B.$
\end{proof}

\section{Coarse Neighborhoods}\label{coarseneighborhoods}

In this section, we introduce the definition of a coarse neighborhood and explore several of its basic properties. We show that if $X$ is a metric space, then coarse neighborhoods coincide with asymptotic neighborhoods defined in \cite{Bell} and that coarse maps copreserve coarse neighborhoods.

\begin{definition}
	Let $(X,\mathcal{B},{\bf b})$ be a coarse proximity space. Given subsets $A,B\subseteq X,$ we say that $B$ is a {\bf coarse neighborhood} of $A,$ denoted $A\ll B,$ if $A\bar{\bf b}(X\setminus B)$. 
\end{definition}

Let us now explore a few basic properties of coarse neighborhoods.

\begin{proposition}\label{propertiesofcoarseneighborhoods}
Let $(X,\mathcal{B},{\bf b})$ be a coarse proximity space. Let $A, B,$ and $C$ be subsets of $X.$ Then the following are true:
\begin{enumerate}[(i)]
\item if $A \in \mathcal{B},$ then $A\ll E$ for any $E \subseteq X,$
\item if $A\ll B,$ then $B$ contains $A$ up to some bounded set, i.e., $(A \setminus B) \in \mathcal{B},$
\item if $A \subseteq B$ and $B\ll C,$ then $A\ll C,$
\item if $(A \setminus B) \in \mathcal{B}$ and  $B\ll C,$ then $A\ll C,$
\item if $A\ll B$ and $B\ll C,$ then $A\ll C,$
\item if $A\ll B$ and $A\ll (X \setminus B),$ then $A \in \mathcal{B},$
\item $A\ll B$ if and only if $(X \setminus B)\ll (X \setminus A).$
\end{enumerate}
\end{proposition}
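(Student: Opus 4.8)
The plan is to unwind everything from the definition $A\ll B \iff A\bar{\bf b}(X\setminus B)$, feeding the five axioms of a coarse proximity (together with Lemma \ref{lemma1} and Remark \ref{remark11}) into each statement, and to prove the parts in an order that lets later parts reuse earlier ones. Most items are a one- or two-line translation; the only part needing a small idea is the transitivity (v), which I would reduce to (iv) rather than attack head-on.

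For (i), axiom (iii) says that a bounded set is coarsely near nothing; hence if $A\in\mathcal{B}$ then $A\bar{\bf b}(X\setminus E)$ holds for every $E$, i.e. $A\ll E$. For (ii), write $A\setminus B = A\cap(X\setminus B)$: were this set unbounded, axiom (v) would force $A{\bf b}(X\setminus B)$, contradicting $A\ll B$; thus $(A\setminus B)\in\mathcal{B}$. For (iii), $A\subseteq B$ gives $B\bar{\bf b}(X\setminus C)\Rightarrow A\bar{\bf b}(X\setminus C)$ directly by Remark \ref{remark11}. Part (iv) generalizes (iii): I would decompose $A=(A\cap B)\cup(A\setminus B)$, note that $A\cap B\subseteq B$ gives $(A\cap B)\bar{\bf b}(X\setminus C)$ by Remark \ref{remark11}, while $(A\setminus B)\in\mathcal{B}$ gives $(A\setminus B)\bar{\bf b}(X\setminus C)$ by axiom (iii), and recombine with the union axiom (ii) to conclude $A\bar{\bf b}(X\setminus C)$, that is $A\ll C$.

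The remaining parts then fall out quickly. For (v), apply (ii) to $A\ll B$ to learn $(A\setminus B)\in\mathcal{B}$; now $(A\setminus B)\in\mathcal{B}$ together with $B\ll C$ is exactly the hypothesis of (iv), which yields $A\ll C$. For (vi), the two hypotheses read $A\bar{\bf b}(X\setminus B)$ and $A\bar{\bf b}B$, so the union axiom (ii) gives $A\bar{\bf b}\big((X\setminus B)\cup B\big)$, that is $A\bar{\bf b}X$; if $A$ were unbounded then $A\cap X=A\notin\mathcal{B}$ and axiom (v) would give $A{\bf b}X$, a contradiction, so $A\in\mathcal{B}$. For (vii), both $A\ll B$ and $(X\setminus B)\ll(X\setminus A)$ translate, using $X\setminus(X\setminus A)=A$ and the symmetry axiom (i), into the single statement $A\bar{\bf b}(X\setminus B)$, so the equivalence is immediate.

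The main obstacle is essentially bookkeeping: one must stay careful with complements and with which axiom licenses each step. The one genuinely non-mechanical move is recognizing that transitivity (v) should not be proved directly but routed through (ii) and (iv); a direct chain from $A\bar{\bf b}(X\setminus B)$ and $B\bar{\bf b}(X\setminus C)$ to $A\bar{\bf b}(X\setminus C)$ is awkward, whereas the reduction via the bounded-difference characterization (ii) makes it a one-liner.
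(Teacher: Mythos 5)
Your proof is correct and follows essentially the same route as the paper's: identical arguments for (i)--(iii), (v), and (vii), and the same reduction of (v) through (ii) and (iv). The only cosmetic differences are that in (iv) you inline the union-axiom argument that the paper outsources to Proposition \ref{proposition2}, and in (vi) you deduce $A\bar{\bf b}X$ and invoke axiom (v) of the coarse proximity, where the paper instead applies part (ii) twice and writes $A=(A\setminus B)\cup(A\setminus(X\setminus B))$ to conclude boundedness from the bornology axioms --- both are equally valid one-liners.
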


\begin{proof}
Property (i) is trivial. To show (ii), notice that if $A \cap (X \setminus B) \notin \mathcal{B},$ then $A{\bf b} (X \setminus B),$ a contradiction to $A\ll B.$ (iii) follows from Remark \ref{remark11} and (iv) follows from (iii) and Propostion \ref{proposition2}. To see (v), notice that $A\ll B$ and property (ii) imply $(A \setminus B) \in \mathcal{B}.$ Thus, property (iv) shows $A\ll C.$ To see (vi), notice that $A\ll B$ and $A\ll (X \setminus B)$ imply $(A \setminus B) \in \mathcal{B},$ and $(A \setminus (X\setminus B)) \in \mathcal{B},$ respectively. Thus, $A=(A \setminus B) \cup (A \setminus (X\setminus B)) \in \mathcal{B}.$ Finally, to see (vii), notice that 
\begin{equation*}
\begin{split}
A\ll B & \Longleftrightarrow A\bar{\bf b} (X \setminus B)\\
& \Longleftrightarrow (X \setminus B)\bar{\bf b}A\\
& \Longleftrightarrow (X \setminus B)\bar{\bf b} (X \setminus (X \setminus A))\\
& \Longleftrightarrow (X \setminus B)\ll (X \setminus A). \qedhere
\end{split}
\end{equation*}
\end{proof}

\begin{remark}
Notice that (ii) in Proposition \ref{propertiesofcoarseneighborhoods} implies that if $A$ is unbounded, then so is its coarse neighborhood. Also, property (iv) implies that if $A\ll B$ and $D$ is bounded, then $(A \cup D)\ll B.$
\end{remark}

\begin{remark}\label{strongaxiomneighborhoods}
Notice that by using coarse neighborhoods, the strong axiom can be translated to: $A\bar{\bf b}B$ implies that there exists a subset $E$ such that $A\ll (X \setminus E)$ and $B\ll E.$
\end{remark}

The following proposition characterizes the strong axiom in terms of coarse neighborhoods.

\begin{proposition}\label{intermediateneighborhood}
Let $(X,\mathcal{B})$ be a set with a bornology, and let ${\bf b}$ be a relation on $2^{X}$ satisfying axioms (i),(ii),(iii), and (v) of a coarse proximity. Define, for subsets $A,B\subseteq X$, $A\ll B\iff A\bar{ {\bf b}}(X\setminus B)$. Then for subsets $A,B\subseteq X$ the following are equivalent:
\begin{enumerate}[(i)]
\item $A\bar{\bf b}B$ implies that there exists a subset $E$ such that $A\bar{\bf b}E$ and $(X\setminus E)\bar{\bf b}B,$
\item $A\ll B$ implies that there exists $C \subseteq X$ such that $A\ll C \ll B.$
\end{enumerate}
\end{proposition}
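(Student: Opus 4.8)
The plan is to observe that statements (i) and (ii) are the very same assertion re-expressed through the defining equivalence $A\ll B\iff A\bar{\bf b}(X\setminus B)$, so that the entire proof reduces to careful bookkeeping with complements rather than any substantial argument. In particular, I expect that none of the axioms (ii), (iii), (v) will actually be invoked: the equivalence is purely formal, flowing from the definition of $\ll$ together with the identity $X\setminus(X\setminus B)=B$.

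For the implication (i) $\Rightarrow$ (ii), I would begin with the hypothesis $A\ll B$, which by definition means $A\bar{\bf b}(X\setminus B)$. Applying the strong axiom (i) to the pair $(A,\,X\setminus B)$ produces a set $E$ satisfying $A\bar{\bf b}E$ and $(X\setminus E)\bar{\bf b}(X\setminus B)$. Setting $C:=X\setminus E$, the first condition becomes $A\bar{\bf b}(X\setminus C)$, i.e.\ $A\ll C$, while the second becomes $C\bar{\bf b}(X\setminus B)$, i.e.\ $C\ll B$. This yields the intermediate set $C$ with $A\ll C\ll B$ demanded by (ii).

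For the reverse implication (ii) $\Rightarrow$ (i), I would begin with $A\bar{\bf b}B$, which is precisely the statement $A\ll(X\setminus B)$. Statement (ii) then supplies a set $C$ with $A\ll C\ll(X\setminus B)$. Setting $E:=X\setminus C$, the relation $A\ll C$ unfolds to $A\bar{\bf b}(X\setminus C)=A\bar{\bf b}E$, and the relation $C\ll(X\setminus B)$ unfolds to $C\bar{\bf b}(X\setminus(X\setminus B))=(X\setminus E)\bar{\bf b}B$. These are exactly the two conclusions required by the strong axiom.

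Since each step is a direct substitution, I do not anticipate any genuine obstacle. The only point requiring care is tracking the double complement $X\setminus(X\setminus B)=B$ so that, under the assignment $E=X\setminus C$, the pair of conditions $A\bar{\bf b}E$ and $(X\setminus E)\bar{\bf b}B$ lines up correctly with $A\ll C$ and $C\ll B$.
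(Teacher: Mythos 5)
Your proposal is correct and is essentially identical to the paper's own proof: both directions proceed by the same substitutions $C = X\setminus E$ and $E = X\setminus C$, using only the definition of $\ll$ and the identity $X\setminus(X\setminus B)=B$. Your side remark is also accurate — the paper's argument likewise invokes none of axioms (i), (ii), (iii), (v), so the equivalence is purely formal (with both statements read as universally quantified over $A,B$, since the strong axiom must be applied to the pair $(A, X\setminus B)$ rather than $(A,B)$).
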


\begin{proof}
($(i) \implies (ii)$) Assume $A\ll B.$ Since $A \bar{\bf b} (X \setminus B),$ by the strong axiom there exists $E \subseteq X$ such that $A \bar{\bf b} E$ and $(X \setminus E) \bar{\bf b} (X \setminus B).$ In other words, we have that $A \bar{\bf b} (X \setminus (X \setminus E))$ and  $(X \setminus E) \bar{\bf b} (X \setminus B),$ i.e., $A\ll (X \setminus E) \ll B.$ Setting $C:=(X \setminus E)$ gives the desired result.\\
($(ii) \implies (i)$) Assume $A\bar{\bf b}B.$ This can be written as $A\bar{\bf b}(X \setminus (X \setminus B)),$ i.e., $A\ll (X \setminus B).$ Therefore, there exists $C \subseteq X$ such that $A\ll C\ll (X \setminus B),$ i.e. $A \bar{\bf b} (X \setminus C)$ and $C \bar{\bf b} (X \setminus (X \setminus B)).$ Let $E=X \setminus C.$ Then $A \bar{\bf b} E$ and $(X\setminus E) \bar{\bf b} B.$
\end{proof}

Now we will investigate the relationship between coarse neighborhoods, asymptotic neighborhoods, and asymptotic disjointness. In particular, we will show that in the case of metric spaces, coarse neighborhoods and asymptotic neighborhoods coincide. Recall the following definitions from \cite{Bell}:

\begin{definition}
	In a metric space $(X,d),$ a subset $B\subseteq X$ is an \textbf{asymptotic neighborhood} of a set $A\subseteq X$ if there exists $x_{0}\in X$ such that
	 \[\lim_{r \to \infty}d(A\setminus \mathscr{B}(x_{0},r),X\setminus B)=\infty.\]
\end{definition}

\begin{definition}\label{asymptoticdisjointness}
	In a metric space $(X,d),$ two subsets $A,B\subseteq X$ are said to be \textbf{asymptotically disjoint} if for some (and hence every) point $x_{0}\in X$ one has
	\[\lim_{r\to\infty}d(A\setminus \mathscr{B}(x_{0},r),B\setminus \mathscr{B}(x_{0},r))=\infty.\]
\end{definition}

The following result follows directly from definitions.

\begin{proposition}\label{asymptoticdisjointnessproposition}
	Let $(X,d)$ be a metric space with the corresponding metric coarse proximity ${\bf b}_{d}.$ Then $A$ and $B$ are asymptotically disjoint in the sense of Definition \ref{asymptoticdisjointness} if and only if $A\bar{ {\bf b}}B$.
\end{proposition}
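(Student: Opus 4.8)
The plan is to unwind both definitions and observe that the only real content is the dictionary between bounded sets and metric balls centered at a fixed basepoint. Fix $x_{0}\in X$; since asymptotic disjointness is basepoint-independent (by the ``for some (and hence every)'' in Definition \ref{asymptoticdisjointness}) and coarse closeness does not reference a basepoint at all, it suffices to work with this single $x_{0}$. The governing observation is that a subset $D\subseteq X$ is bounded if and only if $D\subseteq \mathscr{B}(x_{0},r)$ for some $r$, together with the monotonicity fact that if $S\subseteq S'$ and $T\subseteq T'$ then $d(S,T)\geq d(S',T')$. In particular the function $r\mapsto d(A\setminus \mathscr{B}(x_{0},r),\,B\setminus \mathscr{B}(x_{0},r))$ is non-decreasing in $r$.

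First I would spell out the two statements as negations of quantified conditions. Unwinding Definition \ref{metriccoarseproximity}, the relation $A\bar{\bf b}B$ reads: for every $\epsilon<\infty$ there is a bounded set $D$ with $d(A\setminus D,\,B\setminus D)\geq \epsilon$. Unwinding Definition \ref{asymptoticdisjointness}, asymptotic disjointness reads: for every $M>0$ there is an $R$ such that $d(A\setminus \mathscr{B}(x_{0},r),\,B\setminus \mathscr{B}(x_{0},r))>M$ for all $r\geq R$.

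For the forward direction, assuming asymptotic disjointness and given $\epsilon$, I would take the radius $r$ supplied by the limit and set $D:=\mathscr{B}(x_{0},r)$, which is bounded and witnesses $d(A\setminus D,\,B\setminus D)>\epsilon$; thus $A\bar{\bf b}B$. For the converse, assuming $A\bar{\bf b}B$ and given $M$, I would take the bounded set $D$ with $d(A\setminus D,\,B\setminus D)\geq M$, enclose it in a ball $D\subseteq \mathscr{B}(x_{0},r)$, and use the containments $A\setminus \mathscr{B}(x_{0},r)\subseteq A\setminus D$ and $B\setminus \mathscr{B}(x_{0},r)\subseteq B\setminus D$ together with the monotonicity fact to conclude $d(A\setminus \mathscr{B}(x_{0},r'),\,B\setminus \mathscr{B}(x_{0},r'))\geq M$ for all $r'\geq r$. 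This is precisely the statement that the limit is infinite.

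Since each step is a direct translation, I do not anticipate a genuine obstacle; the only point requiring care is the quantifier bookkeeping. The limit being $\infty$ requires the inequality to persist for all large $r$, and this is exactly where monotonicity of the distance function is used to upgrade a single good radius into a cofinal tail of good radii.
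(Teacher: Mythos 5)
Your proof is correct, and it is precisely the argument the paper has in mind: the paper offers no written proof, stating only that the proposition ``follows directly from definitions,'' and your unwinding of the quantifiers --- using that bounded sets are exactly the subsets of balls $\mathscr{B}(x_{0},r)$ and that $r\mapsto d(A\setminus\mathscr{B}(x_{0},r),B\setminus\mathscr{B}(x_{0},r))$ is non-decreasing --- is the direct verification being alluded to. The monotonicity observation is indeed the one point of substance, since it upgrades a single witnessing radius to the tail condition needed for the limit to be $\infty$.
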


To compare asymptotic neighborhoods and coarse neighborhoods, we need the following lemma:

\begin{lemma}\label{easierdisjointness}
	Let $(X,d)$ be a metric space and let $A,B\subseteq X.$ Then $A$ and $B$ are asymptotically disjoint if and only if for every $n\in\mathbb{N}$ there is a bounded set $C$ such that $d(A\setminus C,B)>n$.
\end{lemma}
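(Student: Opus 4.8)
The plan is to prove the two implications separately, fixing once and for all a basepoint $x_0 \in X$ (legitimate, since asymptotic disjointness holds for some point if and only if it holds for every point). I expect the implication from the stated ``bounded $C$'' condition to asymptotic disjointness to be routine. Let $n$ be given and choose a bounded set $C$ with $d(A\setminus C, B) > n$. Since $C$ is bounded it is contained in some ball $\mathscr{B}(x_0, R)$, so for every $r \geq R$ we have $A\setminus \mathscr{B}(x_0,r) \subseteq A\setminus \mathscr{B}(x_0,R) \subseteq A\setminus C$. Because shrinking either argument of the set distance can only increase it, this gives
\[
d(A\setminus \mathscr{B}(x_0,r),\, B\setminus \mathscr{B}(x_0,r)) \geq d(A\setminus \mathscr{B}(x_0,r),\, B) \geq d(A\setminus C,\, B) > n
\]
for all $r \geq R$. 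As $n$ was arbitrary, the limit defining asymptotic disjointness is $\infty$.

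The converse is where the real work lies, and I expect it to be the main obstacle. The difficulty is that asymptotic disjointness only controls the far-away part of $B$ (it deletes a ball from $B$ as well), whereas the target condition demands $d(A\setminus C, B) > n$ against \emph{all} of $B$, including the bounded part of $B$ lying inside the ball. My plan is to absorb that bounded part into $C$ by means of the triangle inequality. Given $n$, asymptotic disjointness yields a radius $r$ with $d(A\setminus \mathscr{B}(x_0,r), B\setminus \mathscr{B}(x_0,r)) > n$. I would then set $C := \mathscr{B}(x_0, r+n+1)$, which is bounded, and split $B$ as $(B\cap \mathscr{B}(x_0,r)) \cup (B\setminus \mathscr{B}(x_0,r))$. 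For the far piece, $A\setminus C \subseteq A\setminus \mathscr{B}(x_0,r)$ forces $d(A\setminus C, B\setminus \mathscr{B}(x_0,r)) > n$ at once. For the near piece, any $a \in A\setminus C$ satisfies $d(x_0,a) \geq r+n+1$ while any $b \in B\cap \mathscr{B}(x_0,r)$ satisfies $d(x_0,b) < r$, so the triangle inequality gives $d(a,b) \geq d(x_0,a) - d(x_0,b) > (r+n+1) - r = n+1$. Taking the infimum over the two pieces then yields $d(A\setminus C, B) > n$, as required.

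The only subtlety to watch is the interplay of strict inequalities with the infimum. I build the extra ``$+1$'' into the radius precisely so that the near-piece estimate is bounded below by $n+1$, which keeps the combined infimum strictly above $n$ even though an infimum of quantities each exceeding $n$ need not itself exceed $n$. The degenerate cases in which one of the two pieces of $B$ is empty cause no trouble, since by convention the distance to the empty set is $\infty$.
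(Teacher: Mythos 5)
Your proof is correct and rests on the same quantitative idea as the paper's: given the radius $r$ supplied by asymptotic disjointness for the threshold $n$, take $C$ to be a ball about $x_0$ whose radius exceeds $r$ by more than $n$, so that the triangle inequality separates $A\setminus C$ from the part of $B$ inside $\mathscr{B}(x_0,r)$ while the asymptotic-disjointness estimate handles the part outside. The paper merely packages this by contradiction (extracting witness pairs $(x_C,y_C)$ and disposing separately of the cases where $A$ or $B$ is bounded), whereas your direct split of $B$ into near and far pieces, together with the convention $d(\,\cdot\,,\emptyset)=\infty$, covers the degenerate cases uniformly.
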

\begin{proof}
	If either $A$ or $B$ is bounded, then the result is trivial. Thus, assume that $A$ and $B$ are unbounded. The reverse direction is trivial. Assume that $A$ and $B$ are asymptotically disjoint and assume towards a contradiction that $n\in\mathbb{N}$ is such that for all bounded $C\subseteq X,$ $d(A\setminus C,B)\leq n$. Thus, for every such bounded set $C$ there is a pair $(x_{C},y_{C})\in A\times B$ such that $x_{C}\notin C$ and $d(x_{C},y_{C})\leq n$. Since $A$ and $B$ are asymptotically disjoint, there is a bounded set $D$ such that $d(A\setminus D,B\setminus D)>n$. Without loss of generality we can assume that $D=\mathscr{B}(x_0, r)$ for some $x_0 \in X$ and some radius $r.$ Thus, for any $r'>r,$ if $C=\mathscr{B}(x_0, r')$, then $x_C \notin C$ and $y_C \in D.$ In particular, if $r'>r+n,$ then we have $x_C \notin \mathscr{B}(x_0, r'), y_C \in \mathscr{B}(x_0, r)$ and $d(x_{C},y_{C})\leq n,$ a contradiction.
\end{proof}

\begin{proposition}
Given a metric space $(X,d)$ and subsets $A,B\subseteq X,$ $B$ is an asymptotic neighborhood of $A$ if and only if $B$ is a coarse neighborhood of $A$ with respect to the metric coarse proximity ${\bf b}_d.$
\end{proposition}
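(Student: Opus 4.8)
The plan is to reduce both notions to asymptotic disjointness and then invoke the two results just proved. Recall that $B$ being a coarse neighborhood of $A$ means $A\ll B$, i.e.\ $A\bar{\bf b}(X\setminus B)$. By Proposition \ref{asymptoticdisjointnessproposition}, this holds if and only if $A$ and $X\setminus B$ are asymptotically disjoint. So the entire proposition will follow once I show that $A$ and $X\setminus B$ are asymptotically disjoint exactly when $B$ is an asymptotic neighborhood of $A$.

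The key tool is Lemma \ref{easierdisjointness}, applied with the two sets being $A$ and $X\setminus B$. It tells me that $A$ and $X\setminus B$ are asymptotically disjoint if and only if for every $n\in\mathbb{N}$ there is a bounded set $C$ with $d(A\setminus C, X\setminus B)>n$. I would then observe that this last condition is precisely a reformulation of the asymptotic neighborhood condition $\lim_{r\to\infty}d(A\setminus\mathscr{B}(x_0,r),X\setminus B)=\infty$. The bridge uses two elementary facts: first, every bounded set is contained in some ball $\mathscr{B}(x_0,r)$, so the balls are cofinal among bounded sets and one may replace "there is a bounded $C$" by "there is a radius $r$"; second, the map $r\mapsto d(A\setminus\mathscr{B}(x_0,r),X\setminus B)$ is non-decreasing, since enlarging $r$ shrinks $A\setminus\mathscr{B}(x_0,r)$ and hence can only increase its distance to the fixed set $X\setminus B$. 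Monotonicity guarantees that the limit equals $\infty$ if and only if for each $n$ some finite $r$ already pushes the distance past $n$, which is exactly the per-$n$ condition supplied by the lemma. The trivial edge cases (say $A$ bounded, where $A\ll B$ holds automatically by Proposition \ref{propertiesofcoarseneighborhoods}(i) and $A\setminus\mathscr{B}(x_0,r)$ is eventually empty) are already absorbed into Lemma \ref{easierdisjointness}, so no separate treatment is needed.

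The only genuine subtlety, and what I expect to be the main obstacle if one tried to argue from scratch, is the asymmetry between the two defining expressions: the definition of asymptotic disjointness truncates \emph{both} sets, writing $d\bigl(A\setminus\mathscr{B}(x_0,r),(X\setminus B)\setminus\mathscr{B}(x_0,r)\bigr)$, whereas the definition of asymptotic neighborhood truncates only $A$ and leaves $X\setminus B$ intact. Establishing that truncating a single set suffices is exactly the content of Lemma \ref{easierdisjointness}, so by routing the argument through that lemma the difficulty is already handled upstream, and what remains is the routine unwinding of definitions together with the monotonicity and cofinality observations above.
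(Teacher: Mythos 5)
Your proof is correct and takes essentially the same approach as the paper: the nontrivial content in both is Proposition \ref{asymptoticdisjointnessproposition} together with Lemma \ref{easierdisjointness}, which is exactly where the one-sided versus two-sided truncation issue you identify is resolved. The only cosmetic difference is that you run both implications through this equivalence chain (adding the routine cofinality-of-balls and monotonicity observations), whereas the paper proves the forward direction by a direct $\epsilon$-contradiction and cites those two results only for the converse.
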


\begin{proof}
Assume that $B$ is an asymptotic neighborhood of $A.$ Then there exists $x_{0}\in X$ such that $\lim_{r \to \infty}d(A\setminus \mathscr{B}(x_{0},r),X\setminus B)=\infty.$ For contradiction, assume that $A{\bf b}(X\setminus B).$ Then there exists $\epsilon < \infty$ such that for any $r$ we can find $x \in A\setminus \mathscr{B}(x_{0},r)$ and $y\in (X\setminus B) \setminus \mathscr{B}(x_{0},r)$ with the property that $d(x,y)< \epsilon.$ In particular, we can find $x \in A\setminus \mathscr{B}(x_{0},r)$ and $y\in (X\setminus B)$ such that $d(x,y)< \epsilon,$ contradicting the fact that $\lim_{r \to \infty}d(A\setminus \mathscr{B}(x_{0},r),X\setminus B)=\infty.$ The converse follows from Proposition \ref{asymptoticdisjointnessproposition} and Lemma \ref{easierdisjointness}.
\end{proof}

The following proposition shows that coarse maps copreserve asymptotic neighborhoods.

\begin{proposition}\label{copreservingneighborhoods}
	Let $(X,d_{1}),(Y,d_{2})$ be metric spaces and $h:X\rightarrow Y$ a coarse map. If $A,B\subseteq Y$ such that $A\ll B$ with respect to the metric coarse proximity structure induced by $d_2,$ then $h^{-1}(A)\ll h^{-1}(B)$ with respect to the metric coarse proximity structure on $X$ induced by $d_1.$
\end{proposition}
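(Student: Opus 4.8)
The plan is to reduce the statement about coarse neighborhoods to a statement about the coarse proximity relation ${\bf b}$ and then argue by contraposition. Unwinding the definition of a coarse neighborhood, $A \ll B$ means $A \bar{\bf b}(Y \setminus B)$, while the desired conclusion $h^{-1}(A) \ll h^{-1}(B)$ means $h^{-1}(A)\bar{\bf b}\bigl(X \setminus h^{-1}(B)\bigr)$. Since $X \setminus h^{-1}(B) = h^{-1}(Y \setminus B)$, writing $B' := Y \setminus B$ turns the proposition into the following clean assertion: if $A \bar{\bf b} B'$ in $Y$, then $h^{-1}(A)\bar{\bf b}h^{-1}(B')$ in $X$. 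In other words, it suffices to show that a coarse map pulls asymptotically disjoint sets back to asymptotically disjoint sets.

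First I would prove the contrapositive, so I assume $h^{-1}(A){\bf b}h^{-1}(B')$ and aim to deduce $A{\bf b}B'$. Using the second of the equivalent characterizations of coarse closeness, there is an $\epsilon < \infty$ such that for every bounded $D \subseteq X$ one can find $a \in h^{-1}(A)\setminus D$ and $b \in h^{-1}(B')\setminus D$ with $d_1(a,b) < \epsilon$. Because $h$ is bornologous, this $\epsilon$ yields an $S > 0$ with the property that $d_1(x_1,x_2) < \epsilon$ implies $d_2(h(x_1),h(x_2)) < S$; this $S$ will serve as the witnessing constant for $A{\bf b}B'$.

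Then I would verify the second equivalent condition for $A$ and $B'$ in $Y$ with this $S$. Given an arbitrary bounded set $D' \subseteq Y$, set $D := h^{-1}(D')$; since $h$ is proper, $D$ is bounded in $X$. Applying the previous step to this $D$ produces $a \in h^{-1}(A)\setminus D$ and $b \in h^{-1}(B')\setminus D$ with $d_1(a,b) < \epsilon$. Pushing forward, $h(a) \in A$ and $h(b) \in B'$; moreover $a \notin h^{-1}(D')$ and $b \notin h^{-1}(D')$ force $h(a) \notin D'$ and $h(b) \notin D'$, so $h(a) \in A \setminus D'$ and $h(b) \in B' \setminus D'$, while bornologousness gives $d_2(h(a),h(b)) < S$. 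As $D'$ was arbitrary, this is exactly the witnessing condition, so $A{\bf b}B'$, completing the contrapositive.

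There is no deep obstacle here; the entire content is the correct bookkeeping of the two halves of the coarseness hypothesis. The essential point — and the only place where coarseness, rather than mere set theory, is used — is the interplay between properness, which lets me pull an arbitrary bounded set $D'$ in $Y$ back to a bounded set $D$ in $X$ so that the ``escape to infinity'' quantifier is respected, and bornologousness, which transports the uniform distance bound $\epsilon$ in $X$ to a uniform bound $S$ in $Y$. Care must be taken that the witnesses genuinely lie outside $D'$ after applying $h$, which is precisely the observation that $a \notin h^{-1}(D')$ if and only if $h(a) \notin D'$.
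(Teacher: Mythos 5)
Your proof is correct, and it rests on the same two ingredients as the paper's: properness to pull an arbitrary bounded set in $Y$ back to a bounded set in $X$, and bornologousness to transport the uniform distance bound $\epsilon$ forward to a bound $S$. The execution differs in organization. The paper argues by contradiction with a basepoint: from $h^{-1}(A)\not\ll h^{-1}(B)$ it extracts sequences $x_{n}\in h^{-1}(A)\setminus\mathscr{B}(x_{0},n)$ and $y_{n}\in(X\setminus h^{-1}(B))\setminus\mathscr{B}(x_{0},n)$ with $d_{1}(x_{n},y_{n})<\epsilon$, forms the unbounded sets $A'=\{x_{n}\}$ and $B'=\{y_{n}\}$, asserts that coarseness of $h$ gives $h(A'){\bf b}_{d_{2}}h(B')$, and concludes $A\,{\bf b}_{d_{2}}(Y\setminus B)$ by the monotonicity Lemma \ref{lemma1}; it also needs a preliminary case split for bounded $A$ (using Proposition \ref{propertiesofcoarseneighborhoods}(i)). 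You instead note $X\setminus h^{-1}(B)=h^{-1}(Y\setminus B)$ and prove the clean contrapositive directly via the quantifier characterization of ${\bf b}$, which buys you two things: no case split is needed (if $A$ is bounded your hypothesis is simply vacuous and the argument is untouched), and you avoid the auxiliary sets and Lemma \ref{lemma1} entirely. Notably, the paper's step ``which by the coarseness of $h$ implies that $h(A'){\bf b}_{d_{2}}h(B')$'' is asserted without proof --- and it cannot be justified by citing Proposition \ref{coarseequalsproximity}, which appears later and whose forward direction itself relies on this proposition --- so the bookkeeping you spell out (pulling $D'$ back by properness, pushing witnesses and the bound forward) is exactly the content hidden in that assertion. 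Your version is the more self-contained of the two.
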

\begin{proof}
If $A$ is bounded, then since $h$ is a coarse map, $h^{-1}(A)$ is bounded. By Proposition \ref{propertiesofcoarseneighborhoods}, this implies that any set is a coarse neighborhood of $A.$ In particular, $h^{-1}(A)\ll h^{-1}(B).$ So let us assume that $A$ is unbounded. Let $x_{0}\in X$. If $h^{-1}(A)\not\ll h^{-1}(B)$ then there is an $\epsilon>0$ such that for all $n\in\mathbb{N}$ there exists $x_{n}\in h^{-1}(A)\setminus \mathscr{B}(x_{0},n)$ and $y_{n}\in(X\setminus h^{-1}(B))\setminus \mathscr{B}(x_{0},n)$ such that $d(x_{n},y_{n})<\epsilon$. The sets $A^{\prime}:=\{x_{n}\}_{n\in\mathbb{N}}$ and $B^{\prime}:=\{y_{n}\}_{n\in\mathbb{N}}$ are unbounded sets such that $A^{\prime}{\bf b}_{d_1} B^{\prime},$ which by the coarseness of $h$ implies that $h(A^{\prime}){\bf b}_{d_2}h(B^{\prime}).$ Therefore, by Lemma \ref{lemma1}, $A{\bf b}_{d_2}(Y\setminus B)$, a contradiction. Thus, $h^{-1}(A)\ll h^{-1}(B)$.
\end{proof}

\begin{remark}
Notice that if $A,B \subseteq X,$ then $A \ll B,$ does not imply $h(A) \ll h(B).$ To see that, let $X= \mathbb{R}, Y=\mathbb{R}^2, A=B=X,$ and let $f: X \to Y$ be defined by $f(x)=(x,0).$ Then $f$ is a coarse map, $A \ll B,$ but $h(A)\not\ll h(B).$
\end{remark}

\section{Equivalence Relation Induced by Coarse Proximity}\label{equivalencerelation}
In this section, we introduce certain equivalence relations on the power set of a space, called weak asymptotic resemblances. We show that every coarse proximity space induces a weak asymptotic resemblance, and consequently every coarse proximity structure induces a coarse structure. We also show that in the case of metric coarse proximity spaces, the weak asymptotic resemblance coincides with the asymptotic resemblance introduced in \cite{Honari}.

Recall the following definition and two examples from \cite{Honari}:
\begin{definition}\label{asymptoticresemblancedefinition}
Let $X$ be a set. Let $\lambda$ be an equivalence relation on the power set of $X.$ Then $\lambda$ is called an \textbf{asymptotic resemblance} if it satisfies the following properties:
\begin{enumerate}[(i)]
\item $A_1\lambda B_1,\,A_2\lambda B_2\text{ implies }(A_1\cup A_2)\lambda(B_1\cup B_2),$
\item $(B_{1}\cup B_{2})\lambda A$ and $B_{1},B_{2}\neq\emptyset$ implies that there are nonempty $A_{1},A_{2}\subseteq A$ such that $A=A_{1}\cup A_{2}$, $B_{1}\lambda A_{1}$, and $B_{2}\lambda A_{2}$.
\end{enumerate}
\end{definition}

\begin{example}
Let $(X,d)$ be a metric space. For any two subsets $A$ and $B$ of $X,$ define $A\lambda_d B$ if $d_{H}(A,B) < \infty.$ Then the relation $\lambda_d$ is an asymptotic resemblance on $X$. We call $\lambda_d$ the \textbf{asymptotic resemblance induced by the metric $d$}.

\end{example}

\begin{example}\label{example6}
Let $\mathcal{E}$ be a coarse structure on a set $X.$ For any two subsets $A$ and $B$ of $X,$ define $A \lambda_{\mathcal{E}}B$ if $A \subset E[B]$ and $B \subset E[A]$ for some $E \in \mathcal{E}.$ Then the relation $\lambda_{\mathcal{E}}$ is an asymptotic resemblance on $X$. We call $\lambda_{\mathcal{E}}$ the \textbf{asymptotic resemblance induced by the coarse structure $\mathcal{E}$}.
\end{example}

\begin{remark}
Without loss of generality we can always assume that the set $E$ from example \ref{example6} is symmetric.
\end{remark}

\begin{definition}\label{weakasymptoticresemblance}
	Let $X$ be a set and $\phi$ an equivalence relation on $2^{X}$ satisfying the following property:	
	\[A\phi B,\,C\phi D\text{ implies }(A\cup C)\phi(B\cup D).\]
	Then we call $\phi$ a \textbf{weak asymptotic resemblance}. If $A\phi B,$ then we say that $A$ and $B$ are \textbf{$\phi$ related}.
\end{definition}

As stated in the following proposition, every weak asymptotic resemblance induces a coarse structure:

\begin{proposition}\label{generatingcoarse}
	Let $X$ be a set and $\phi$ a weak asymptotic resemblance. Then the collection $\mathcal{E}_{\phi}$ of all subsets $E\subseteq X\times X$ such that $\pi_{1}(F)\phi\pi_{2}(F)$ for all $F\subseteq E$ (where $\pi_{1}$ and $\pi_{2}$ denote projection maps onto the first and second factor, respectively) is a coarse structure on $X$.
\end{proposition}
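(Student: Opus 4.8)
The plan is to verify the five axioms of a coarse structure for $\mathcal{E}_{\phi}$ one at a time, drawing on the fact that $\phi$ is an equivalence relation (hence reflexive, symmetric, and transitive) together with its defining union-compatibility property. Throughout, membership of a set $E\subseteq X\times X$ in $\mathcal{E}_{\phi}$ is tested by checking $\pi_{1}(F)\phi\pi_{2}(F)$ for every $F\subseteq E$, so every verification reduces to reasoning about projections of subsets. The first three axioms will be immediate, the union axiom will use only the union-compatibility of $\phi$, and the composition axiom is where the real work lies.

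For the diagonal, any $F\subseteq\triangle$ satisfies $\pi_{1}(F)=\pi_{2}(F)$, so reflexivity of $\phi$ gives $\pi_{1}(F)\phi\pi_{2}(F)$ and hence $\triangle\in\mathcal{E}_{\phi}$. Downward closure is automatic: if $E\in\mathcal{E}_{\phi}$ and $B\subseteq E$, then every $F\subseteq B$ is also a subset of $E$, so the defining condition for $B$ is inherited from that of $E$. For closure under inverses, given $E\in\mathcal{E}_{\phi}$ and $F\subseteq E^{-1}$, I would observe that $F^{-1}\subseteq E$ while $\pi_{1}(F)=\pi_{2}(F^{-1})$ and $\pi_{2}(F)=\pi_{1}(F^{-1})$; applying the hypothesis to $F^{-1}$ and then symmetry of $\phi$ yields $\pi_{1}(F)\phi\pi_{2}(F)$.

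For the union axiom, take $E,F\in\mathcal{E}_{\phi}$ and an arbitrary $G\subseteq E\cup F$. Writing $G_{1}:=G\cap E$ and $G_{2}:=G\setminus E\subseteq F$, we have $G=G_{1}\cup G_{2}$, and the hypotheses give $\pi_{1}(G_{1})\phi\pi_{2}(G_{1})$ and $\pi_{1}(G_{2})\phi\pi_{2}(G_{2})$. The union-compatibility property of $\phi$ then yields $(\pi_{1}(G_{1})\cup\pi_{1}(G_{2}))\phi(\pi_{2}(G_{1})\cup\pi_{2}(G_{2}))$, and since projections commute with unions this is precisely $\pi_{1}(G)\phi\pi_{2}(G)$, so $E\cup F\in\mathcal{E}_{\phi}$.

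The composition axiom is the one genuine obstacle, and it is where transitivity of $\phi$ enters decisively. Given $E,F\in\mathcal{E}_{\phi}$ and an arbitrary $G\subseteq E\circ F$, I would, for each $(x,y)\in G$, choose an intermediate point $z(x,y)$ with $(x,z(x,y))\in E$ and $(z(x,y),y)\in F$ (this invokes a choice function). Setting $G_{E}:=\{(x,z(x,y))\mid(x,y)\in G\}\subseteq E$ and $G_{F}:=\{(z(x,y),y)\mid(x,y)\in G\}\subseteq F$, the crucial observation is that the intermediate set $Z:=\pi_{2}(G_{E})$ coincides with $\pi_{1}(G_{F})$, while $\pi_{1}(G_{E})=\pi_{1}(G)$ and $\pi_{2}(G_{F})=\pi_{2}(G)$. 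The hypotheses then give $\pi_{1}(G)\phi Z$ and $Z\phi\pi_{2}(G)$, and transitivity of $\phi$ closes the chain to $\pi_{1}(G)\phi\pi_{2}(G)$, so $E\circ F\in\mathcal{E}_{\phi}$. I expect the bookkeeping around the choice of intermediate points $z(x,y)$, and the verification that the projections of $G_{E}$ and $G_{F}$ glue correctly through the common set $Z$, to be the only nontrivial part of the argument.
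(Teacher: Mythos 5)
Your proof is correct: the paper itself gives no argument for this proposition, deferring instead to Proposition 3.2 of \cite{Honari}, and your axiom-by-axiom verification --- reflexivity for the diagonal, symmetry applied to $F^{-1}$ for inverses, union-compatibility together with $\pi_i(G_1\cup G_2)=\pi_i(G_1)\cup\pi_i(G_2)$ for unions, and the choice-of-midpoints decomposition $G_E\subseteq E$, $G_F\subseteq F$ with $\pi_2(G_E)=\pi_1(G_F)$ plus transitivity for composition --- is exactly the standard argument behind that citation. Nothing is missing; the degenerate case $F=\emptyset$ is covered by reflexivity of $\phi$, and your implicit observation that only the equivalence-relation axioms and union-compatibility are used is precisely why the cited proof for asymptotic resemblances carries over verbatim to weak asymptotic resemblances.
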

\begin{proof}
	See Proposition 3.2 of \cite{Honari}.
\end{proof}

Every weak asymptotic resemblance induces a coarse structure, and every coarse structure induces an asymptotic resemblance. The following result shows that composition of these two operations does not enlarge the collection of related sets.

\begin{proposition}\label{almost}
	Let $X$ be a set and $\phi$ a weak asymptotic resemblance on $X.$ Let $\mathcal{E}_{\phi}$ be the coarse structure induced by that relation, as in proposition \ref{generatingcoarse}. Then the asymptotic resemblance relation induced by $\mathcal{E}_{\phi}$ is a subset of $\phi$.
\end{proposition}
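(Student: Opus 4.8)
The plan is to unwind both constructions and then exhibit a single subset of one entourage whose two coordinate projections recover $A$ and $B$ exactly. First I would unwind the hypothesis: if $A\lambda_{\mathcal{E}_\phi}B$, then by the definition of the asymptotic resemblance induced by a coarse structure (Example \ref{example6}) there is an entourage $E\in\mathcal{E}_\phi$ with $A\subseteq E[B]$ and $B\subseteq E[A]$. Using the remark following that example, I would replace $E$ by a symmetric entourage containing it (e.g.\ $E\cup E^{-1}$, which is still in $\mathcal{E}_\phi$ by the coarse structure axioms and still satisfies both inclusions).

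The key step is to restrict this entourage to the rectangle $A\times B$ by setting
\[F:=E\cap(A\times B).\]
Because $F\subseteq E$ and $E\in\mathcal{E}_\phi$, the defining property of $\mathcal{E}_\phi$ from Proposition \ref{generatingcoarse} immediately yields $\pi_1(F)\phi\pi_2(F)$. The remaining task is to identify these projections: I would check that $\pi_1(F)=A$ and $\pi_2(F)=B$. The containments $\pi_1(F)\subseteq A$ and $\pi_2(F)\subseteq B$ follow at once from $F\subseteq A\times B$, and the reverse containments are exactly what the two hypotheses provide: $A\subseteq E[B]$ matches each $a\in A$ with some $b\in B$ through $E$, so $a\in\pi_1(F)$, and $B\subseteq E[A]$, read through the symmetry of $E$, matches each $b\in B$ with some $a\in A$ through $E$, so $b\in\pi_2(F)$. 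Once both projections are pinned down, $\pi_1(F)\phi\pi_2(F)$ reads as $A\phi B$, which is the conclusion.

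The one place requiring care, and the main (if minor) obstacle, is the orientation of pairs in the second projection: the raw inclusion $B\subseteq E[A]$ only guarantees pairs $(b,a)\in E$, so without the reduction to a symmetric $E$ one could not place the points of $B$ as second coordinates of $F$. Reducing to symmetric $E$ at the outset removes this obstruction. Finally, I would note that the degenerate cases where $A$ or $B$ is empty are harmless: the two inclusions force both sets to be empty simultaneously, and $A\phi B$ then holds by reflexivity of the equivalence relation $\phi$.
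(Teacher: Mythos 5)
Your proof is correct and follows essentially the same route as the paper: reduce to a symmetric entourage $E$, pass to a subset $F\subseteq E$ lying in $A\times B$ whose projections are exactly $A$ and $B$, and invoke the defining property of $\mathcal{E}_{\phi}$ from Proposition \ref{generatingcoarse}. The only cosmetic difference is that you take $F=E\cap(A\times B)$ while the paper takes the union of the witnessing pairs $(a,b)$; both yield $\pi_1(F)=A$ and $\pi_2(F)=B$ by the same symmetry argument, and your explicit justification of the symmetric replacement $E\cup E^{-1}$ and of the empty-set case is a welcome bit of extra care.
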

\begin{proof} Let $A,B\subseteq X$ such that $A\lambda_{\mathcal{E}_{\phi}} B.$ Then there exists a symmetric $E\in\mathcal{E}_{\phi}$ such that $A\subseteq E[B],\,B\subseteq E[A],$ i.e., the following are satisfied:

\begin{enumerate}[(i)]
\item for all $a \in A$, there exists $b \in B$ such that $(a,b) \in E,$
\item for all $b \in B$, there exists $a \in A$ such that $(b,a) \in E.$
\end{enumerate}
Since $E$ is symmetric, these are equivalent to the following:
\begin{enumerate}[(i)]
\item for all $a \in A$, there exists $b \in B$ such that $(a,b) \in E,$
\item for all $b \in B$, there exists $a \in A$ such that $(a,b) \in E.$
\end{enumerate}
Let $F$ be a subset of $E$ that consists of the union of the points $(a,b)$ described in conditions (i) and (ii). Then clearly $\pi_{1}(F)=A$ and $\pi_{2}(F)=B,$ which by the definition of the coarse structure induced by $\phi$ implies that $A\phi B.$
\end{proof}

Now we are going to show that every coarse proximity space induces a weak asymptotic resemblance.

\begin{theorem}\label{asymptoticresemblance}
	Let $(X,\mathcal{B},{\bf b})$ be a coarse proximity space. Let $\phi$ be the relation on the power set of $X$ defined in the following way: $A\phi B$ if and only if the following hold:
	\begin{enumerate}[(i)]
	\item for every unbounded $B^{\prime}\subseteq B$ we have $A{\bf b}B^{\prime},$
	\item for every unbounded $A^{\prime}\subseteq A$ we have $A^{\prime}{\bf b}B.$
	\end{enumerate}	
	Then  $\phi$ is a weak asymptotic resemblance that we call the \textbf{weak asymptotic resemblance induced by the coarse proximity ${\bf b}$.} If the coarse proximity is induced by a metric $d,$ then we call $\phi$ the \textbf{weak asymptotic resemblance induced by $d$}.
\end{theorem}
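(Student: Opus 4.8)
The plan is to check the two requirements of Definition \ref{weakasymptoticresemblance}: that $\phi$ is an equivalence relation and that it is compatible with unions. I would split this into four separate verifications — reflexivity, symmetry, the union property, and transitivity — and I expect transitivity to be the only one demanding genuine work; the other three fall out of the coarse proximity axioms together with Lemma \ref{lemma1}. Throughout I would keep in mind the degenerate case: if $A$ is bounded it has no unbounded subsets, so both defining clauses become vacuous, and consequently any two bounded sets are $\phi$-related.

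First, reflexivity and symmetry. For $A\phi A$, both defining conditions reduce to asserting $A\,\mathbf{b}\,A'$ for every unbounded $A'\subseteq A$; since $A\cap A'=A'\notin\mathcal{B}$, this is immediate from axiom (v). Symmetry $A\phi B\Rightarrow B\phi A$ is a direct consequence of the symmetry of $\mathbf{b}$ (axiom (i)): condition (i) for $B\phi A$ is exactly condition (ii) for $A\phi B$ read through $\mathbf{b}$'s symmetry, and condition (ii) for $B\phi A$ is condition (i) for $A\phi B$. For the union property, given $A\phi B$ and $C\phi D$, to verify condition (i) for $(A\cup C)\phi(B\cup D)$ I would take an unbounded $E\subseteq B\cup D$, write $E=(E\cap B)\cup(E\cap D)$, and note that at least one piece, say $E\cap B$, is unbounded; then $A\,\mathbf{b}\,(E\cap B)$ by condition (i) of $A\phi B$, and $(A\cup C)\,\mathbf{b}\,E$ follows from the monotonicity of Lemma \ref{lemma1}. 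Condition (ii) is symmetric, splitting an unbounded subset of $A\cup C$.

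The heart of the matter is transitivity. Suppose $A\phi B$ and $B\phi C$. By the symmetry already established it suffices to prove condition (i) for $A\phi C$, i.e. that $A\,\mathbf{b}\,C'$ for every unbounded $C'\subseteq C$; condition (ii) then follows by applying this very statement to $C\phi B$ and $B\phi A$. So fix an unbounded $C'\subseteq C$, which by condition (i) of $B\phi C$ gives $B\,\mathbf{b}\,C'$. Arguing by contradiction, I would assume $A\,\bar{\mathbf{b}}\,C'$ and invoke the strong axiom in its neighborhood form (Remark \ref{strongaxiomneighborhoods}) to obtain a set $E$ with $A\ll(X\setminus E)$ and $C'\ll E$, that is, $A\,\bar{\mathbf{b}}\,E$ and $(X\setminus E)\,\bar{\mathbf{b}}\,C'$. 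Splitting $B=(B\cap E)\cup(B\setminus E)$ and applying axiom (ii) to $B\,\mathbf{b}\,C'$, one of the two pieces must be coarsely close to $C'$; but $B\setminus E\subseteq X\setminus E$ forces $(B\setminus E)\,\bar{\mathbf{b}}\,C'$ by monotonicity (Remark \ref{remark11}), so it must be that $(B\cap E)\,\mathbf{b}\,C'$, and in particular $B\cap E\notin\mathcal{B}$ by axiom (iii). Thus $B\cap E$ is an unbounded subset of $B$, so condition (i) of $A\phi B$ yields $A\,\mathbf{b}\,(B\cap E)$; on the other hand $B\cap E\subseteq E$ and $A\,\bar{\mathbf{b}}\,E$ give $A\,\bar{\mathbf{b}}\,(B\cap E)$ again by monotonicity, a contradiction.

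I anticipate no trouble with reflexivity, symmetry, or the union property, which are pure axiom-chasing. The delicate step is transitivity, and within it the key device is using the strong axiom to separate $A$ from $C'$ by a set $E$ and then testing the intermediate set $B$ against $E$ and its complement: the complement half is killed by $(X\setminus E)\,\bar{\mathbf{b}}\,C'$, which isolates an unbounded piece of $B$ inside $E$ that must be simultaneously near and far from $A$. The only points requiring care are the correct translation between the $\ll$ and $\bar{\mathbf{b}}$ formulations of the strong axiom and the consistent bookkeeping of which clause of $\phi$ is being invoked at each stage.
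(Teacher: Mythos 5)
Your proof is correct and follows essentially the same route as the paper's: reflexivity, symmetry, and the union property by direct axiom-chasing, and transitivity via the strong axiom, separating with a set $E$ and then forcing an unbounded piece of $B$ to lie inside $E$, hence simultaneously near to and far from $A$. Your bookkeeping is slightly cleaner than the paper's --- splitting $B=(B\cap E)\cup(B\setminus E)$ and applying the union axiom lets you bypass Proposition \ref{proposition2}, and your observation that the defining clauses are vacuous for bounded sets absorbs the bounded/unbounded case analysis the paper handles separately through Remark \ref{remark8} and Lemma \ref{lemma9} --- but the key idea is identical.
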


To prove the above theorem, we need the following remarks and lemmas.

\begin{remark}\label{remark8}
If $\phi$ is the relation defined in Theorem \ref{asymptoticresemblance} and $A$ and $B$ are bounded, then they are always $\phi$ related. If $A$ is bounded and $B$ unbounded, then they are not $\phi$ related.
\end{remark}

\begin{remark}\label{remark9}
If $\phi$ is the relation defined in Theorem \ref{asymptoticresemblance}, then notice that $A \phi A$ for all subsets $A$ of $X.$ Also, for all $A,B \subseteq X$ we have $A \phi B$ if and only if $B \phi A.$
\end{remark}

\begin{remark}
Notice that the $\phi$ relation defined in Theorem \ref{asymptoticresemblance} cannot be an asymptotic resemblance, since we have that the empty set is $\phi$ related to any bounded set, whereas in any asymptotic resemblance space the empty set is only related to itself.
\end{remark}

\begin{lemma}\label{transitivity}
Let $(X,\mathcal{B},{\bf b})$ be a coarse proximity space. Then the relation $\phi$ defined in Theorem \ref{asymptoticresemblance} is transitive.
\end{lemma}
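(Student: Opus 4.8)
The plan is to show that the relation $\phi$ from Theorem \ref{asymptoticresemblance} is transitive, that is, to assume $A\phi B$ and $B\phi C$ and deduce $A\phi C$. By the symmetry of $\phi$ noted in Remark \ref{remark9}, it suffices to verify only one of the two defining conditions: I would show that for every unbounded $A'\subseteq A$ we have $A'\,\mathbf{b}\,C$, and then the condition involving unbounded subsets of $C$ follows by the symmetric argument with the roles of $A$ and $C$ interchanged. Before diving in, I would dispose of the degenerate cases using Remark \ref{remark8}: if any of $A,B,C$ is bounded, then $\phi$-relatedness with its neighbor forces that neighbor to be bounded too (since a bounded set is only $\phi$ related to bounded sets), so all three are bounded and $A\phi C$ holds trivially. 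Thus I may assume $A$, $B$, and $C$ are all unbounded.

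So fix an unbounded $A'\subseteq A$; I want $A'\,\mathbf{b}\,C$. The idea is to route the closeness through $B$. Since $A\phi B$ and $A'\subseteq A$ is unbounded, condition (ii) of the definition gives $A'\,\mathbf{b}\,B$. The main work is to promote this to closeness with $C$. The natural first attempt is to produce an unbounded subset $B'\subseteq B$ that is "witnessed" by $A'$ and then apply $B\phi C$ to get $A'\,\mathbf{b}\,C$. Concretely, from $A'\,\mathbf{b}\,B$ I would extract, in the metric-model intuition, the portion of $B$ that is actually close to $A'$; the clean way to do this abstractly is via the strong axiom and its coarse-neighborhood reformulation (Remark \ref{strongaxiomneighborhoods} and Proposition \ref{intermediateneighborhood}), together with Lemma \ref{lemma1} and Proposition \ref{proposition2}.

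The key step I anticipate is the following decomposition claim: if $A'\,\mathbf{b}\,B$, then there is an unbounded subset $B'\subseteq B$ with $A'\,\mathbf{b}\,B''$ for every unbounded $B''\subseteq B'$ (in other words, $A'$ is close to an unbounded piece of $B$ "everywhere"). Granting such a $B'$, since $B\phi C$ and $B'\subseteq B$ is unbounded, condition (i) of $B\phi C$ (in its symmetric form from Remark \ref{remark9}) yields a relationship between $B'$ and $C$; I would then combine $A'\,\mathbf{b}\,B'$ with the closeness of $B'$ to $C$ to conclude $A'\,\mathbf{b}\,C$. The transitivity of the underlying closeness along a "chain" $A'\to B'\to C$ is exactly the kind of gluing that the strong axiom is designed to support, so I expect to argue by contradiction: assume $A'\bar{\mathbf{b}}C$, apply the strong axiom to obtain a splitting set $E$ with $A'\,\ll\,(X\setminus E)$ and $C\,\ll\,E$, and then show that $B'$ cannot simultaneously be close to $A'$ and live in the $C$-side, deriving a contradiction via Proposition \ref{propertiesofcoarseneighborhoods}.

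I expect the main obstacle to be the decomposition claim, i.e. converting the single instance of closeness $A'\,\mathbf{b}\,B$ into a robustly unbounded witness $B'\subseteq B$ every unbounded subpiece of which remains close to $A'$. Closeness is not a "hereditary" relation on subsets, so one cannot simply shrink $B$ and keep closeness; the correct device is to pass to coarse neighborhoods, where properties (iii)--(v) of Proposition \ref{propertiesofcoarseneighborhoods} give the transitivity and monotonicity that raw closeness lacks, and to use Proposition \ref{converse_of_strong_axiom} to pass back from neighborhoods to non-closeness when deriving the final contradiction. Once this witness-extraction lemma is in hand, the chaining $A'\to B'\to C$ and the appeal to $B\phi C$ should be routine, and the symmetric argument closes the proof.
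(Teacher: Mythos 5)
Your overall architecture (dispose of bounded cases via Remark \ref{remark8}, fix an unbounded $A'\subseteq A$, route through $B$, finish with the strong axiom) points in the right direction, but the keystone you flag as the main obstacle --- the ``decomposition claim'' that $A'\,\mathbf{b}\,B$ yields an unbounded $B'\subseteq B$ with $A'\,\mathbf{b}\,B''$ for \emph{every} unbounded $B''\subseteq B'$ --- is not merely unproven; it is false in a general coarse proximity space. It is a metric-space fact (there it follows from the characterization of $\mathbf{b}$ via finite Hausdorff distance of unbounded subsets), but Lemma \ref{transitivity} concerns arbitrary coarse proximity spaces. Concretely: let $X=\mathbb{N}$ with the bornology of finite sets, fix two distinct free ultrafilters $\mathcal{U}_1,\mathcal{U}_2$ on $\mathbb{N}$, and declare $A\,\mathbf{b}\,B$ if and only if $A\cap B$ is infinite, or $A\in\mathcal{U}_1$ and $B\in\mathcal{U}_2$, or $A\in\mathcal{U}_2$ and $B\in\mathcal{U}_1$. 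All five axioms hold (the union axiom because ultrafilters and infiniteness respect finite unions; the strong axiom by a short case analysis, taking $E=X\setminus A$ when $A$ lies in neither ultrafilter and $E=B$ otherwise). Now pick $S\in\mathcal{U}_1$ with $X\setminus S\in\mathcal{U}_2$ and set $A=S$, $B=X\setminus S$. Then $A\,\mathbf{b}\,B$, but any infinite $B'\subseteq B$ splits into two disjoint infinite halves, at most one of which lies in $\mathcal{U}_2$; the other half is an unbounded $B''\subseteq B'$ with $A\,\bar{\mathbf{b}}\,B''$. So no robust witness $B'$ exists, and your chaining $A'\to B'\to C$ has nothing to stand on. Relatedly, your endgame sketch assumes $B'$ ``lives in the $C$-side'' of the separating set $E$, but $B'$ may straddle $E$, and taming the straddling half is exactly where the (false) decomposition claim would have been invoked.

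The repair --- and this is the paper's proof --- is to never extract a witness from $A'\,\mathbf{b}\,B$ at all. Argue by contradiction at the top level: if $A\,\bar{\phi}\,C$, then without loss of generality some unbounded $A'\subseteq A$ satisfies $A'\,\bar{\mathbf{b}}\,C$; apply the strong axiom to \emph{this} non-closeness to obtain $E$ with $A'\,\bar{\mathbf{b}}\,E$ and $(X\setminus E)\,\bar{\mathbf{b}}\,C$, and then split $B$ along $E$. If $B\cap(X\setminus E)$ is unbounded, it is an unbounded subset of $B$ that is not close to $C$ (Remark \ref{remark11}), contradicting $B\,\phi\,C$; otherwise $B\setminus D\subseteq E$ for some bounded $D$, so $A'\,\bar{\mathbf{b}}\,(B\setminus D)$ by Remark \ref{remark11}, hence $A'\,\bar{\mathbf{b}}\,B$ by Proposition \ref{proposition2}, contradicting $A\,\phi\,B$. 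The structural point you missed is that the hypotheses $A\,\phi\,B$ and $B\,\phi\,C$ already quantify over \emph{all} unbounded subsets, so they should be spent against the dichotomy created by the separating set $E$, rather than trying to upgrade the single closeness $A'\,\mathbf{b}\,B$ into a hereditary one.
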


\begin{proof}
Let $A, B,$ and $C$ be subsets of $X$ such that $A \phi B$ and $B \phi C.$ Then either all of them are bounded or all of them are unbounded. If all of them are bounded, then by remark \ref{remark8} we have $A \phi C.$ So let us assume that all of them are unbounded. For contradiction, assume $A \bar{\phi} C$. Then, without loss of generality there exists an unbounded set $A^{\prime} \subseteq A$ such that $A^{\prime}\bar{{\bf b}}C$ (the other case will follow similarly by symmetry). Thus, there exists an unbounded set $E$ such that $A^{\prime}\bar{\bf b}E$ and $(X\setminus E)\bar{\bf b}C.$ If there exists an unbounded $B^{\prime} \subseteq B$ such that $B^{\prime} \subseteq (X\setminus E),$ then $(X\setminus E)\bar{\bf b}C$ and remark \ref{remark11} imply that $B^{\prime}\bar{\bf b}C,$ a contradiction to $B \phi C.$ Thus, it has to be that $B \subseteq E$ up to some bounded set $D$, i.e., $(B \setminus D) \subseteq E.$ Thus, since $A^{\prime}\bar{\bf b}E,$  by remark \ref{remark11} we must have that $A^{\prime}\bar{\bf b}(B \setminus D),$ which by proposition \ref{proposition2} implies that $A^{\prime} \bar{\bf b}B,$ a contradiction to $A \phi B.$ Therefore, it has to be that $A{\phi} C.$
\end{proof}

\begin{lemma}\label{lemma9}
Let $(X,\mathcal{B},{\bf b})$ be a coarse proximity space and let $\phi$ be the relation on the power set of $X$ as defined in Theorem \ref{asymptoticresemblance}. If $A\phi B,$ then for any bounded sets $D_1$ and $D_2,$ we have $(A \cup D_1) \phi (B \cup D_2).$
\end{lemma}

\begin{proof}
If $A$ and $B$ are bounded, then the result follows from Remark \ref{remark8}. So let us assume that $A$ and $B$ are unbounded. Let $E \subseteq A\cup D_1$ be unbounded. Then there exists unbounded $E' \subseteq E$ such that $E' \subseteq A.$ Thus, since $A\phi B,$ we have $E'{\bf b}B,$ which by Lemma \ref{lemma1} implies that $E{\bf b}(B \cup D_2.)$ The other condition follows similarly.
\end{proof}

Finally we are ready to prove Theorem \ref{asymptoticresemblance}.

\begin{proof}[Proof of Theorem \ref{asymptoticresemblance}]
The fact that $\phi$ is an equivalence relation follows from Remark \ref{remark9} and Lemma \ref{transitivity}. To see that $\phi$ satisfies the property from Definition \ref{weakasymptoticresemblance}, let $A,B,C,D\subseteq X$ be such that $A\phi B$ and $C\phi D$. If either pair ($A$ and $B$ or $C$ and $D$) is bounded, then the result follows from lemma \ref{lemma9}. Therefore, we will assume that all of them are unbounded. Now let $E \subseteq A\cup C$ be an unbounded set. Then either $E\cap A$ or $E\cap C$ is unbounded. Let us call that unbounded set $E'$. Then we have either $E' {\bf b}B$ or $E' {\bf b}D$, which by Lemma \ref{lemma1} implies that $E {\bf b} (B \cup D).$ Similarly in the reverse direction. Thus $(A\cup C)\phi(B\cup D)$. 
\end{proof}

The following corollary shows that coarse proximities induce coarse structures.
\begin{corollary}
Let $(X,\mathcal{B},{\bf b})$ be a coarse proximity space and let $\phi$ be the weak asymptotic resemblance induced by coarse proximity ${\bf b}.$ Then $\phi$ induces a coarse structure on $X.$
\end{corollary}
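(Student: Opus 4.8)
The plan is to combine two results already established in this section. Proposition \ref{generatingcoarse} tells us that any weak asymptotic resemblance $\phi$ on a set $X$ gives rise to a coarse structure $\mathcal{E}_{\phi}$, namely the collection of all $E \subseteq X \times X$ with $\pi_1(F)\,\phi\,\pi_2(F)$ for every $F \subseteq E$. Theorem \ref{asymptoticresemblance} tells us that the relation $\phi$ built from the coarse proximity ${\bf b}$ (via conditions (i) and (ii) on unbounded subsets) is in fact a weak asymptotic resemblance. So the corollary is essentially immediate: it is the composition of these two facts.

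Concretely, first I would invoke Theorem \ref{asymptoticresemblance} to assert that the relation $\phi$ induced by ${\bf b}$ is a weak asymptotic resemblance on $X$ in the sense of Definition \ref{weakasymptoticresemblance}. Then I would apply Proposition \ref{generatingcoarse} to this particular $\phi$, which directly produces the coarse structure $\mathcal{E}_{\phi}$ on $X$. Since the hypotheses of Proposition \ref{generatingcoarse} ask only that $\phi$ be a weak asymptotic resemblance — precisely what Theorem \ref{asymptoticresemblance} guarantees — the conclusion follows with no further work.

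Given that both ingredients are already proved, I do not anticipate any genuine obstacle; the corollary is a formal consequence. The only thing to be careful about is citing the right objects: the statement should make explicit that $\phi$ is the weak asymptotic resemblance \emph{induced by} ${\bf b}$ (so that we are feeding the correct relation into Proposition \ref{generatingcoarse}), rather than some arbitrary $\phi$. A one-line proof suffices, reading essentially: by Theorem \ref{asymptoticresemblance}, $\phi$ is a weak asymptotic resemblance, so by Proposition \ref{generatingcoarse} it induces the coarse structure $\mathcal{E}_{\phi}$ on $X$.
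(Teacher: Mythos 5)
Your proof is correct and matches the paper's own argument exactly: the paper also proves this corollary as an immediate consequence of Theorem \ref{asymptoticresemblance} and Proposition \ref{generatingcoarse}. Nothing further is needed.
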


\begin{proof}
This is an immediate consequence of Proposition \ref{generatingcoarse} and Theorem \ref{asymptoticresemblance}.
\end{proof}

Next proposition implies that in the case of metric spaces, the induced weak asymptotic resemblance and the induced asymptotic resemblance coincide when one considers nonempty subsets.
\begin{proposition}\label{metricconsistency}
	Let $(X,d)$ be a metric space and let $\phi$ be the weak asymptotic resemblance induced by the metric $d.$ Then given nonempty $A,B\subseteq X,$ we have that $A\phi B$ if and only if $A$ and $B$ have finite Hausdorff distance.
\end{proposition}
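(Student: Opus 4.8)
The plan is to prove the equivalence by splitting into cases according to the boundedness of $A$ and $B$, and to concentrate the real work on the case where both are unbounded. First I would dispose of the degenerate cases. If $A,B\in\mathcal{B}$, then, being nonempty, they automatically satisfy $d_{H}(A,B)<\infty$, while Remark \ref{remark8} tells us they are $\phi$ related, so both sides hold. If exactly one of them is bounded, say $A\in\mathcal{B}$ and $B\notin\mathcal{B}$, then $d_{H}(A,B)=\infty$ because an unbounded set cannot lie inside any bounded neighborhood of a bounded set, and Remark \ref{remark8} says a bounded set and an unbounded set are never $\phi$ related, so both sides fail. In every mixed case the two conditions agree, and it remains to treat $A,B\notin\mathcal{B}$.

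For the backward implication with both sets unbounded, I would assume $d_{H}(A,B)=M<\infty$ and verify the two defining conditions of $\phi$. Fix a basepoint $x_{0}$. Given an unbounded $A'\subseteq A$, I claim $A'{\bf b}B$ with witnessing constant $\epsilon=M+1$: for an arbitrary bounded $D\subseteq\mathscr{B}(x_{0},r)$, I choose $a\in A'$ with $d(a,x_{0})>r+M+1$ (possible since $A'$ is unbounded); since $d(a,B)\le M$ there is $b\in B$ with $d(a,b)<M+1$, and then $d(b,x_{0})\ge d(a,x_{0})-(M+1)>r$ forces $b\notin D$, so $d(A'\setminus D,B\setminus D)<\epsilon$. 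Hence condition (ii) holds, and condition (i) follows by the same argument with the roles of $A$ and $B$ exchanged (using the symmetry of $d_{H}$). Thus $A\phi B$.

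The forward implication is where the main work lies, and I would argue by contraposition: assuming $d_{H}(A,B)=\infty$, I produce an unbounded subset witnessing the failure of $\phi$. Since $d_{H}(A,B)=\max\{\sup_{a\in A}d(a,B),\,\sup_{b\in B}d(b,A)\}=\infty$, one of the two suprema is infinite; say $\sup_{a\in A}d(a,B)=\infty$ (the other case is symmetric and violates condition (i) rather than (ii)). Then I pick $a_{n}\in A$ with $d(a_{n},B)>n$ and set $A':=\{a_{n}\mid n\in\mathbb{N}\}$; fixing $b_{0}\in B$ (this is where nonemptiness of $B$ is used) gives $d(a_{n},b_{0})>n$, so $A'$ is unbounded. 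The crux is to show $A'\bar{\bf b}B$: if instead $A'{\bf b}B$, then by the equivalent characterization of coarse closeness there are unbounded $A''\subseteq A'$ and $B''\subseteq B$ with $d_{H}(A'',B'')=:K<\infty$, so every point of $A''$ lies within $K$ of $B\supseteq B''$; but $A''$ is an infinite subset of $\{a_{n}\}$ and hence contains some $a_{n}$ with $n>K$, contradicting $d(a_{n},B)>n>K$. Therefore condition (ii) of $\phi$ fails and $A\bar{\phi}B$.

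The main obstacle I anticipate is exactly this last step: it is essential to use the characterization of $A'{\bf b}B$ as the existence of unbounded subsets at finite Hausdorff distance, rather than the raw $\epsilon$-distance definition, since that is what allows the single escaping sequence $\{a_{n}\}$ to simultaneously defeat coarse closeness with all of $B$. The remaining care is purely bookkeeping about selecting witnesses outside an arbitrary bounded set $D$ in the backward direction, which becomes routine once one exploits unboundedness to move far from the basepoint $x_{0}$.
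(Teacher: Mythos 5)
Your proof is correct and takes essentially the same route as the paper's: both directions rest on selecting points $a_n\in A$ with $d(a_n,B)>n$ to defeat condition (ii) of $\phi$, together with the characterization of $A'{\bf b}B$ via unbounded subsets at finite Hausdorff distance. Your variations---the explicit case split on boundedness, contraposition instead of contradiction, verifying the backward direction from the raw $\epsilon$-definition rather than constructing $B'=\{b_a\}$, and getting unboundedness of $A'$ from a fixed $b_0\in B$ instead of the paper's contradiction argument---are cosmetic.
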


\begin{proof}
To prove the forward direction, assume that $A\phi B$ and assume towards a contradiction that $d_{H}(A,B)=\infty$. Then for each $n\in\mathbb{N}$ there exists $x_{n}\in A$ such that $d(x_{n},B)>n$ or there exists $y_n \in B$ such that $d(y_{n},A)>n$. Define $A^{\prime}$ to be the collection of all such $x_n$ and $B^{\prime}$ to be the collection of all such $y_n.$ Without loss of generality we may assume that $A^{\prime}$ is not finite. Notice that $A^{\prime}$ has to be unbounded (if $A^{\prime}$ is bounded, then $d(a_i, a_j)<M$ for all $a_i, a_j \in A^{\prime}$. Let $a_k \in A^{\prime}$. Then $d(a_k, B) \leq N$ for some $N,$ and consequently $d(a_i, B) \leq M+N$ for all $a_i \in A^{\prime},$ a contradiction to the construction of $A'$). Because $A\phi B$ we have that $A^{\prime}{\bf b}B,$ which implies that there are unbounded subsets $A^{\prime\prime}\subseteq A^{\prime}$ and $B^{\prime\prime}\subseteq B$ such that $d_{H}(A^{\prime\prime},B^{\prime\prime}) \leq n$ for some $n < \infty.$ Therefore, for all $a \in A^{\prime\prime}$ there exists $b \in B^{\prime\prime}$ such that $d(a,b)<n,$ a contradiction to the construction of $A^{\prime}.$

To prove the converse direction, assume that $d_{H}(A,B)=m < \infty$. If $A$ and $B$ are bounded, then $A\phi B$ trivially. If $A$ and $B$ are unbounded and $A^{\prime}\subseteq A$ is an unbounded set, then we know that $A^{\prime}\subseteq \mathscr{B}(B,m).$ Therefore, for all $a \in A^{\prime}$ we can find $b_a \in B$ such that $d(a,b_a) <m$. Let $B^{\prime}=\{b_{a}\}_{a\in A^{\prime}}.$ Then by construction of $B^{\prime}$ we have that $d_{H}(A^{\prime},B^{\prime})\leq m< \infty.$ which implies that $A^{\prime}{\bf b}B$. Running through the same argument replacing $A$'s with $B$'s yields $A\phi B$.
\end{proof}

The above proposition also implies that in the case of a metric space $(X,d),$ the underlying coarse proximity relation induces the asymptotic resemblance induced by $d$ when one considers nonempty subsets.

The following proposition shows that in any coarse proximity space two subsets are $\phi$ related if and only if they share all coarse neighborhoods.

\begin{proposition}\label{coarseneighborhoodinclusion}
	Let $(X,\mathcal{B},{\bf b})$ be a coarse proximity space and $\phi$ the weak asymptotic resemblance induced by the coarse proximity ${\bf b}.$ Then for $A,B\subseteq X$ the following are equivalent:
	\begin{enumerate}[(i)]
		\item For all $C\subseteq X,$ $A\ll C$ if and only if $B\ll C,$
		\item $A \phi B.$
	\end{enumerate}
\end{proposition}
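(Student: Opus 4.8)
The plan is to prove the two implications separately, and in both directions to argue by contraposition, exploiting the symmetry between $A$ and $B$ built into the definition of $\phi$ (Remark \ref{remark9}) so that each direction really only requires checking one of the two defining conditions of $\phi$.

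First I would establish $(ii) \implies (i)$, which I expect to be the easier half. Assume $A \phi B$ and suppose $A \ll C$ for some $C \subseteq X$; I want to conclude $B \ll C$, i.e. $B \bar{\bf b}(X \setminus C)$. For contradiction, suppose $B {\bf b}(X \setminus C)$. The idea is to extract from $B$ an unbounded witness to this closeness and transport it across the $\phi$-relation. More precisely, since $B {\bf b}(X\setminus C)$, by Lemma \ref{lemma1} and the defining conditions of $\phi$ one can produce an unbounded $B' \subseteq B$ with $B' {\bf b}(X \setminus C)$; then condition (ii) of the definition of $\phi$ (applied with the roles set up so that every unbounded subset of $B$ is coarsely close to $A$) together with transitivity-type reasoning should force $A {\bf b}(X\setminus C)$, contradicting $A \ll C$. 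By the $A$-$B$ symmetry of $\phi$ and of the biconditional to be proved, this single argument handles both "$A\ll C \Rightarrow B \ll C$" and its reverse.

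The harder direction is $(i) \implies (ii)$, and this is where I expect the main obstacle to lie. Here I assume $A$ and $B$ share all coarse neighborhoods and must verify both conditions defining $A \phi B$. The natural approach is contrapositive: if $A \bar\phi B$, then without loss of generality there is an unbounded $A' \subseteq A$ with $A' \bar{\bf b} B$. Now I want to build a set $C$ that is a coarse neighborhood of one of $A, B$ but not the other, thereby violating (i). The strong axiom is the key tool: from $A' \bar{\bf b} B$ it yields a set $E$ with $A' \bar{\bf b} E$ and $(X \setminus E)\bar{\bf b} B$, which in the language of coarse neighborhoods (Remark \ref{strongaxiomneighborhoods}) says $A' \ll (X\setminus E)$ and $B \ll E$. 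The difficulty is that $A' \ll (X\setminus E)$ is a statement about the subset $A'$, not about $A$ itself, so I cannot immediately read off that $X \setminus E$ separates $A$ from $B$ as coarse neighborhoods. The crux of the argument will be to choose $C$ (likely $C := E$ or a modification of it) so that $B \ll C$ holds while $A \not\ll C$ fails; showing $A \not\ll C$ amounts to showing $A {\bf b}(X \setminus C)$, for which the unbounded piece $A' \subseteq A$ with $A' \bar{\bf b} E$ must be leveraged carefully, possibly combined with Proposition \ref{converse_of_strong_axiom} to rule out the separation.

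Throughout I would lean on Lemma \ref{lemma1} and Remark \ref{remark11} (monotonicity of ${\bf b}$), Proposition \ref{proposition2} (stability under removing bounded sets), and the coarse-neighborhood calculus of Proposition \ref{propertiesofcoarseneighborhoods}, together with Remark \ref{remark8} to dispose of the degenerate cases where one of $A, B$ is bounded. The genuinely delicate point, and the step I would write out most carefully, is the bookkeeping in $(i) \implies (ii)$ that converts the unbounded witness $A'$ and the strong-axiom set $E$ into a single $C$ witnessing a failure of shared coarse neighborhoods.
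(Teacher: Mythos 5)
There is a genuine gap, and it sits in the direction you expected to be easy. In $(ii)\implies(i)$ your plan hinges on ``transitivity-type reasoning'' to pass from $B{\bf b}(X\setminus C)$ and $A\phi B$ to $A{\bf b}(X\setminus C)$, but ${\bf b}$ is emphatically not transitive (the paper's own example: in $\mathbb{R}^{2}$ the positive $x$-axis, the first quadrant, and the positive $y$-axis), and the defining conditions of $\phi$ only relate unbounded subsets of $A$ or $B$ to the \emph{opposite whole set} --- they say nothing directly about a third set such as $X\setminus C$. Indeed, the transfer statement you want (if $A\phi B$ and $B{\bf b}D$ then $A{\bf b}D$) is essentially Corollary \ref{corollaryforsection7}, which the paper deduces \emph{from} the proposition you are proving, so invoking it would be circular. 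The step can be made honest, but only with the tool you reserved for the other direction: the paper applies Proposition \ref{intermediateneighborhood} (equivalently, the strong axiom applied to $A\bar{\bf b}(X\setminus C)$) to interpolate $A\ll E\ll C$; then $B\cap(X\setminus E)$ must be bounded, since an unbounded such intersection would be an unbounded subset of $B$, hence ${\bf b}$-close to $A$ by $A\phi B$, contradicting $A\ll E$ via Remark \ref{remark11}; finally $(B\setminus D)\subseteq E$ together with $E\bar{\bf b}(X\setminus C)$ and Proposition \ref{proposition2} yields $B\ll C$. Your sketch never brings the strong axiom into this direction, and without it the crucial step has no support.

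In $(i)\implies(ii)$ your plan is correct, and the point you flag as the delicate crux closes in one line --- though not via Proposition \ref{converse_of_strong_axiom}, which is a red herring here. From $A'\bar{\bf b}E$, axiom (v) forces $A'\cap E\in\mathcal{B}$, so $A'\setminus E$ is an unbounded subset of $A\cap(X\setminus E)$, and axiom (v) again gives $A{\bf b}(X\setminus E)$, i.e.\ $A\not\ll E$, while the strong axiom already gave $B\ll E$; thus $C:=E$ itself witnesses the failure of (i), no modification needed. For comparison, the paper avoids constructing a separating $C$ at all: assuming $A\bar{\bf b}B'$ for some unbounded $B'\subseteq B$, it gets $E$ with $A\ll(X\setminus E)$ and $B'\ll E$, transfers $A\ll(X\setminus E)$ to $B\ll(X\setminus E)$ by hypothesis (i), restricts to $B'$, and concludes $B'\ll E$ and $B'\ll(X\setminus E)$, forcing $B'\in\mathcal{B}$ by Proposition \ref{propertiesofcoarseneighborhoods}(vi) --- a contradiction. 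Both routes work and use the same circle of ideas; your version of this direction is fine once the axiom (v) observation is supplied, but as submitted the proposal leaves both cruxes open and proposes an invalid mechanism for one of them.
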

\begin{proof}
 ($(ii) \implies (i)$) Assume $A \phi B$ and let $C$ be such that $A\ll C.$ Proposition \ref{intermediateneighborhood} implies the existence of $E$ such that $A\ll E \ll C.$ Notice that $B \subseteq E$ up to a bounded set $D,$ i.e., $(B \setminus D) \subseteq E.$ For if that is not the case, then $D$ is an unbounded subset of $ X \setminus E$ such that $D {\bf b} A$ (because $D \subseteq B$ and $A \phi B$), and therefore implying that $(X \setminus E) {\bf b} A,$ a contradiction to $A\ll E.$ Thus, we know that $(B \setminus D) \subseteq E$ and since $E\ll C$ ($i.e., E \bar{\bf b} (X \setminus C)$), we have that $(B \setminus D) \bar{\bf b} (X \setminus C)$, which by Proposition \ref{proposition2} shows that $B \bar{\bf b} (X \setminus C),$ i.e. $B\ll C.$ The other implication follows by symmetry.

($(i)\implies (ii)$) Let $B^{\prime}\subseteq B$ be an unbounded subset and assume towards a contradiction that $A\bar{{\bf b}}B^{\prime}$. Then by the strong axiom there is an $E\subseteq X$ such that $A\ll (X\setminus E)$ and $B^{\prime}\ll E.$ However, by assumption we have that $B\ll (X\setminus E).$ In particular, this implies that $B^{\prime}\ll (X\setminus E).$ So we have that $B^{\prime}\ll E$ and $B^{\prime}\ll (X\setminus E),$ which by Proposition \ref{propertiesofcoarseneighborhoods} implies that $B^{\prime}$ is bounded, a contradiction. Therefore $A{\bf b}B^{\prime}$ for every unbounded $B^{\prime}\subseteq B$. Similarly one can show that $A^{\prime} {\bf b}B$ for every unbounded $A^{\prime}\subseteq A$. Thus $A \phi B.$
\end{proof}

One could expect that $A \phi B$ implies that for all $C\subseteq X$ $C\ll A$ if and only if $C\ll B.$ However, that is not the case.

\begin{example}
Consider $\mathbb{R}^2.$ Let $A=\{(x,y) \mid y=\abs{x} \},$ $B=\{(x+1,y) \mid (x,y) \in A \},$ and $C=\{(x,y) \mid (x,y) \in A \text{ and } x \leq 0 \}.$ Let $X=A \cup B$ with the metric inherited from $\mathbb{R}^2.$ Then $A \phi B$ and $C\ll A,$ but it is not true that $C\ll B$ (in fact, $C$ is unbounded and disjoint from $B$).
\end{example}

The following corollary is a direct consequence of Proposition \ref{coarseneighborhoodinclusion} and will be used in section \ref{coarsecategory}.

\begin{corollary}\label{corollaryforsection7}
Let $(X,\mathcal{B},{\bf b})$ be a coarse proximity space and $\phi$ the corresponding equivalence relation on $2^{X}$. Let $A,B,C,$ and $D$ be subsets of $X$ such that $A \phi C$ and $B \phi D$. Then $A {\bf b} B$ if and only if $C {\bf b} D.$
\end{corollary}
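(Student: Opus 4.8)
The plan is to reduce Corollary \ref{corollaryforsection7} to Proposition \ref{coarseneighborhoodinclusion} together with the strong axiom, using the symmetry of the statement to cut the work in half. First I would observe that it suffices to prove one implication, say that $A {\bf b} B$ implies $C {\bf b} D$; the reverse implication then follows by the identical argument with the roles of the pairs swapped (since $A \phi C$ is equivalent to $C \phi A$ by Remark \ref{remark9}, and likewise for $B \phi D$). So I would fix the hypotheses $A \phi C$, $B \phi D$, and $A {\bf b} B$, and aim to derive $C {\bf b} D$.

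The natural approach is to argue by contrapositive: assume $C \bar{\bf b} D$ and deduce $A \bar{\bf b} B$, contradicting $A {\bf b} B$. From $C \bar{\bf b} D$ the strong axiom (in the coarse-neighborhood form of Remark \ref{strongaxiomneighborhoods}) yields a set $E$ with $C \ll (X \setminus E)$ and $D \ll E$. Now the key move is to transport these coarse-neighborhood relations across the $\phi$-equivalences. Since $A \phi C$ and $C \ll (X \setminus E)$, Proposition \ref{coarseneighborhoodinclusion} gives $A \ll (X \setminus E)$; since $B \phi D$ and $D \ll E$, the same proposition gives $B \ll E$. But $A \ll (X \setminus E)$ and $B \ll E$ together say exactly that $A \bar{\bf b} E$ and $(X \setminus E) \bar{\bf b} B$, which by the converse of the strong axiom (Proposition \ref{converse_of_strong_axiom}) forces $A \bar{\bf b} B$, the desired contradiction. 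Therefore $A {\bf b} B$ implies $C {\bf b} D$.

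I expect the only real subtlety to be verifying that Proposition \ref{coarseneighborhoodinclusion} applies in the direction I need: that proposition states $A \phi C$ is equivalent to $A$ and $C$ sharing all coarse neighborhoods, i.e. for all $F \subseteq X$, $A \ll F$ iff $C \ll F$. So from $C \ll (X \setminus E)$ I immediately get $A \ll (X \setminus E)$ by taking $F = X \setminus E$, and from $D \ll E$ I get $B \ll E$ by taking $F = E$. This is a direct application with no boundedness edge cases to worry about, since the proposition is stated for arbitrary subsets. The main thing to be careful about is the precise translation between the $\ll$ notation and the $\bar{\bf b}$ notation at the two ends of the argument (unpacking $C \ll (X\setminus E)$ as $C \bar{\bf b} E$, and repackaging $A \ll (X \setminus E)$, $B \ll E$ into the hypotheses of Proposition \ref{converse_of_strong_axiom}), but these are purely definitional. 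Assembling the contrapositive and invoking symmetry for the reverse implication completes the proof.
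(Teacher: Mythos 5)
Your proposal is correct and takes essentially the same route as the paper's proof: both argue via the strong axiom (in the coarse-neighborhood form of Remark \ref{strongaxiomneighborhoods}), transport the resulting neighborhoods across the $\phi$-classes using Proposition \ref{coarseneighborhoodinclusion}, conclude with the converse of the strong axiom (Proposition \ref{converse_of_strong_axiom}), and dispatch the reverse implication by symmetry. The only cosmetic difference is that you run the contrapositive starting from $C\bar{\bf b}D$ while the paper starts from $A\bar{\bf b}B$, which is the identical argument with the pairs swapped.
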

\begin{proof}
Assume $A \bar{\bf b}B.$ Then there exists $E \subseteq X$ such that $E \bar{\bf b} A$ and $(X \setminus E) \bar{\bf b} B.$ By Remark \ref{strongaxiomneighborhoods} this can be translated to $A\ll (X \setminus E)$ and $B\ll E.$ By Proposition \ref{coarseneighborhoodinclusion}, this implies that $C\ll (X \setminus E)$ and $D\ll E,$ i.e. $E \bar{\bf b} C$ and $(X \setminus E) \bar{\bf b} D.$ By the converse of the strong axiom, this implies that $C \bar{\bf b}D.$ The converse direction follows by symmetry.
\end{proof}

\section{Coarse proximity maps}\label{coarsecategory}

In this section, we introduce functions preserving coarse proximity relations, called coarse proximity maps, and we investigate their basic properties. We also show that the collections of coarse proximity spaces and closeness classes of coarse proximity maps make up a category.

\begin{definition}
	Let $(X,\mathcal{B}_{1},{\bf b}_{1})$ and $(Y,\mathcal{B}_{2},{\bf b}_{2})$ be coarse proximity spaces. Let $f:X\rightarrow Y$ be a function and $A$ and $B$ subsets of $X.$ Then $f$ is a \textbf{coarse proximity map} provided that the following are satisfied:
\begin{enumerate}[(i)]
\item $B\in\mathcal{B}_{1}$ implies $f(B)\in\mathcal{B}_{2},$
\item $A{\bf b}_{1}B$ implies $f(A){\bf b}_{2}f(B).$
\end{enumerate}
\end{definition}

\begin{remark}\label{remark5}
Notice that a coarse proximity map sends unbounded sets to unbounded sets. For if $B \notin \mathcal{B}_1,$ then $B {\bf b}_1B.$ Thus, $f(B) {\bf b}_2 f(B),$ implying that $f(B) \notin \mathcal{B}_2.$ Consequently, preimages of bounded sets are bounded.
\end{remark}

\begin{remark}\label{remark13}
Notice that the composition of two coarse proximity maps is a coarse proximity map.
\end{remark}

The following proposition shows that in the case of metric spaces, coarse maps and coarse proximity maps coincide.

\begin{proposition}\label{coarseequalsproximity}
	Let $(X,\mathcal{B}_{d_1},{\bf b}_1)$ and $(Y,\mathcal{B}_{d_2},{\bf b}_2)$ be metric coarse proximity spaces. Let $f:X\rightarrow Y$ be a function. Then $f$ is a coarse map if and only if $f$ is a coarse proximity map.
\end{proposition}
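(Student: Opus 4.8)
The plan is to reduce everything to the Hausdorff-distance characterizations of $\mathbf{b}$ established earlier, together with the observation (Remark \ref{remark5}) that any coarse proximity map is automatically proper. Throughout I would freely use the equivalence of conditions (1)--(3) for coarse closeness and the fact (Proposition \ref{asymptoticdisjointnessproposition}) that $A\bar{\mathbf{b}}_{d}B$ means exactly that $A$ and $B$ are asymptotically disjoint.

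For the forward direction, suppose $f$ is coarse. Axiom (i) of a coarse proximity map follows from bornologousness alone: a bounded set has bounded diameter, so its image has bounded diameter and hence is bounded. For axiom (ii), I would start from $A\mathbf{b}_{1}B$ and choose unbounded $A_{1}\subseteq A,\ B_{1}\subseteq B$ with $d_{H}(A_{1},B_{1})=m<\infty$; applying bornologousness at scale $R=m+1$ produces $S$ with $d_{H}(f(A_{1}),f(B_{1}))\leq S<\infty$, while properness forces $f(A_{1})$ and $f(B_{1})$ to be unbounded. Since $f(A_{1})\subseteq f(A)$ and $f(B_{1})\subseteq f(B)$, condition (3) yields $f(A)\mathbf{b}_{2}f(B)$. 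This direction is routine.

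For the converse, suppose $f$ is a coarse proximity map. Properness is free from Remark \ref{remark5}, so the entire content is bornologousness, which I would prove by contradiction. If $f$ fails to be bornologous there is a scale $R>0$ such that for every $S$ some pair $x,y$ satisfies $d_{1}(x,y)<R$ but $d_{2}(f(x),f(y))\geq S$. The crucial preliminary step is the claim that such "bad pairs" can be found with their images avoiding any prescribed bounded set $K\subseteq Y$: otherwise every bad pair would have an endpoint in the bounded set $f^{-1}(K)$, hence both endpoints in a single bounded ball, whose image is bounded by axiom (i), forcing a uniform bound on $d_{2}(f(x),f(y))$ and contradicting the failure of bornologousness. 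Using this claim I would build pairs $(a_{n},b_{n})$ inductively so that $d_{1}(a_{n},b_{n})<R$, $d_{2}(f(a_{n}),f(b_{n}))\geq n$, and both $f(a_{n}),f(b_{n})$ lie at distance $>n$ from a fixed $y_{0}$ and from every previously chosen image point. Setting $A=\{a_{n}\}$ and $B=\{b_{n}\}$ gives $d_{H}(A,B)\leq R$ with both sets unbounded (by properness), so $A\mathbf{b}_{1}B$; on the other hand the separation built into the construction forces $d_{2}(f(a_{m}),f(b_{k}))\geq\max(m,k)$, so only the finitely many cross pairs with $m,k\leq\epsilon$ can be $\epsilon$-close, and these lie in a single bounded ball. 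Hence $f(A)$ and $f(B)$ are asymptotically disjoint, i.e. $f(A)\bar{\mathbf{b}}_{2}f(B)$, contradicting that $f$ is a coarse proximity map; combined with properness, $f$ is coarse.

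The main obstacle is precisely this converse: coarse closeness of $f(A)$ and $f(B)$ only asserts the existence of \emph{some} close pairs escaping to infinity, not closeness of the specific diagonal pairs $f(a_{n})\leftrightarrow f(b_{n})$. A naive choice of bad pairs could have images that happen to be coarsely close through a shifted pairing such as $f(a_{n})\leftrightarrow f(b_{n+1})$. The inductive separation, guaranteeing that each new image point is far from \emph{all} earlier ones, is what rules this out and forms the technical heart of the argument, while the escaping-pairs claim is the key lemma that makes the induction possible.
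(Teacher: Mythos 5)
Your proof is correct, but it takes a genuinely different route from the paper in both directions. For the forward implication the paper does not argue via Hausdorff distance at all: it assumes $f(A)\bar{\mathbf{b}}_2 f(B)$, applies the strong axiom to produce a separating set $E\subseteq Y$ with $f(A)\ll(Y\setminus E)$ and $f(B)\ll E$, pulls these coarse neighborhoods back along $f$ using Proposition \ref{copreservingneighborhoods}, and concludes $A\bar{\mathbf{b}}_1 B$ from the converse of the strong axiom (Proposition \ref{converse_of_strong_axiom}); your direct extraction of unbounded subsets $A_1,B_1$ at finite Hausdorff distance and the push-forward by bornologousness is more elementary and self-contained. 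The larger divergence is in the converse: the paper never proves bornologousness by hand, but instead shows $f$ is an asymptotic resemblance map (via Propositions \ref{metricconsistency} and \ref{proximitymapspreserveweakasymptoticresemblance}), notes properness from Remark \ref{remark5}, and then outsources the conclusion to Theorem $2.3$ of \cite{Honari}. Your inductive ``escaping bad pairs'' construction is a correct self-contained replacement for that citation: the preliminary claim (bad pairs at every scale with images avoiding any prescribed bounded $K$) is valid, since otherwise all bad pairs at large scale lie in the $R$-neighborhood of the bounded set $f^{-1}(K)$, whose image is bounded by axiom (i); the induced separation does give $d_2(f(a_m),f(b_k))\geq\max(m,k)$ for all $m,k$, so $f(A)$ and $f(B)$ are asymptotically disjoint while $d_H(A,B)\leq R$, contradicting axiom (ii) — and your diagnosis that the shifted pairing $f(a_n)\leftrightarrow f(b_{n+1})$ is exactly what a naive construction fails to exclude is on target. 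Two harmless slips: unboundedness of $A$ and $B$ follows from the contrapositive of axiom (i) (bounded sets have bounded images), not from properness as you state; and ``both endpoints in a single bounded ball'' should read ``in the bounded $R$-neighborhood of $f^{-1}(K)$.'' What the paper's route buys is brevity and a structural link to the asymptotic resemblance theory it builds on; what yours buys is a fully elementary metric argument with no dependence on the external result from \cite{Honari}.
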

\begin{proof}
To prove the forward direction, assume that $f$ is a coarse map. Since $f$ is bornologous, it sends bounded sets to bounded sets. Now assume $A,B \subseteq X$ are such that $A{\bf b}_1 B,$ and for contradiction assume that $f(A) \bar{\bf b}_2 f(B).$ Then there exists a set $E\subseteq Y$ such that  $f(A) \bar{\bf b}_2 E$ and  $(Y\setminus E) \bar{\bf b}_2 f(B),$ i.e.,
\[f(A)\ll (Y \setminus E) \quad \text{and} \quad f(B) \ll E.\]
Since coarse maps copreserve coarse neighborhoods (see Proposition \ref{copreservingneighborhoods}), this implies that
\[A \subseteq f^{-1}(f(A)) \ll f^{-1} (Y \setminus E)=(X \setminus f^{-1}(E))  \quad \text{and} \quad B \subseteq f^{-1}(f(B))\ll f^{-1}(E),\]
i.e., $A \bar{\bf b} f^{-1}(E)$ and $B \bar{\bf b} (X\setminus f^{-1}(E)).$ By Proposition \ref{converse_of_strong_axiom}, this shows that $A\bar{\bf b} B,$ a contradiction. Thus, it has to be that $f(A){\bf b} f(B),$ completing the proof that $f$ is a coarse proximity map.

To prove the converse, let $f$ be a coarse proximity and let $\lambda_{1}$ and $\lambda_{2}$ be asymptotic resemblance relations induced by the metrics $d_{1}$ and $d_{2}$ respectively. By Proposition \ref{metricconsistency} we have that for nonempty subsets these relations are precisely the $\phi_1$ and $\phi_2$ relations constructed from the respective coarse proximity structures as in Proposition \ref{asymptoticresemblance} (i.e., they are weak asymptotic resemblances induced by $d_1$ and $d_2,$ respectively). We will show that $f$ is an asymptotic resemblance map, as in \cite{Honari}. Let $A,B\subseteq X$ be such that $A\lambda_{1}B$. It is trivial to show that $f(A)\lambda_2 f(B)$ (the implication $A \phi B \implies f(A) \phi f(B)$ is actually true for any coarse proximity map. For the proof, see Proposition \ref{proximitymapspreserveweakasymptoticresemblance}). Thus, $f$ is an asymptotic resemblance mapping as in \cite{Honari}. Since Remark \ref{remark5} implies that $f$ is also proper, by Theorem $2.3$ of \cite{Honari} $f$ must also be a coarse mapping between the metric spaces $(X,d_{1})$ and $(Y,d_{2})$. 
\end{proof}

\begin{corollary}\label{allthesame}
Let $X$ and $Y$ be metric spaces and let $f:X \to Y$ be a function. Then $f$ is a coarse map if and only if $f$ is a coarse proximity map if and only if $f$ is an asymptotic resemblance map.
\end{corollary}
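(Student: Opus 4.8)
The plan is to assemble the statement from Proposition \ref{coarseequalsproximity} together with the argument already contained in its proof, so that almost no genuinely new work is required. Since Proposition \ref{coarseequalsproximity} already gives that $f$ is a coarse map if and only if $f$ is a coarse proximity map, it suffices to fold asymptotic resemblance maps (in the sense of \cite{Honari}) into this equivalence, and I would do this by proving the two implications ``coarse proximity map $\Rightarrow$ asymptotic resemblance map'' and ``asymptotic resemblance map $\Rightarrow$ coarse map.''

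For the first implication I would reuse the reasoning from the converse direction of the proof of Proposition \ref{coarseequalsproximity}: by Proposition \ref{metricconsistency} the weak asymptotic resemblance induced by a metric $d_i$ agrees, on nonempty subsets, with the asymptotic resemblance $\lambda_{d_i}$ given by finite Hausdorff distance, while by Proposition \ref{proximitymapspreserveweakasymptoticresemblance} every coarse proximity map preserves the induced weak asymptotic resemblance. Chaining these facts, a coarse proximity map carries $\lambda_{d_1}$-related sets to $\lambda_{d_2}$-related sets and is therefore an asymptotic resemblance map. For the second implication I would appeal to Theorem $2.3$ of \cite{Honari}, which characterizes coarse maps between metric spaces among asymptotic resemblance maps; combined with Proposition \ref{coarseequalsproximity} this closes the loop and yields the full triple equivalence.

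The step I expect to require the most care is the reverse passage, from asymptotic resemblance map back to coarse map, because Theorem $2.3$ of \cite{Honari} yields coarseness only once properness is available. I would therefore make sure the properness hypothesis is supplied exactly where Honari's theorem is invoked; for a coarse proximity map it is automatic by Remark \ref{remark5}, since preimages of bounded sets are then bounded. A secondary point to verify is the discrepancy between the two relations on empty and bounded sets: Proposition \ref{metricconsistency} identifies the induced weak asymptotic resemblance with $\lambda_d$ only for nonempty subsets, so I would confirm that the translation between ``asymptotic resemblance map'' and ``coarse proximity map'' is insensitive to bounded and empty sets, which holds because such sets are invisible to both relations at infinity.
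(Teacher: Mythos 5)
Your decomposition---Proposition \ref{coarseequalsproximity} for the coarse/coarse-proximity equivalence, Propositions \ref{metricconsistency} and \ref{proximitymapspreserveweakasymptoticresemblance} for ``coarse proximity map $\Rightarrow$ asymptotic resemblance map,'' and Honari's result to close the loop---is essentially the paper's route: the paper proves the corollary by citing Proposition \ref{coarseequalsproximity} together with Proposition $2.3$ of \cite{Honari}, and your first implication is verbatim the argument inside the converse direction of the proof of Proposition \ref{coarseequalsproximity}. However, there is a genuine gap at exactly the step you flagged as delicate, and your proposed fix does not work. In the arrow ``asymptotic resemblance map $\Rightarrow$ coarse map,'' the only hypothesis available is that $f$ preserves the relations $\lambda_{d_1},\lambda_{d_2}$. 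Remark \ref{remark5} supplies properness for \emph{coarse proximity} maps, and at that point in your cycle you do not yet know that $f$ is one---that is precisely what the cycle is supposed to establish---so importing properness from Remark \ref{remark5} there is circular. Nor can properness be dispensed with: a constant map $f:\mathbb{R}\to\mathbb{R}$ preserves the finite-Hausdorff-distance relation (any two nonempty subsets map to singletons at distance $0$), is bornologous, but is not proper and hence neither coarse nor a coarse proximity map. So ``relation-preserving $\Rightarrow$ coarse'' is false on its own; properness must enter as a hypothesis of Honari's theorem, not as a consequence of something proved later.

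Note how the paper avoids this trap: in Proposition \ref{coarseequalsproximity} the direction that invokes Theorem $2.3$ of \cite{Honari} \emph{starts} from the hypothesis that $f$ is a coarse proximity map, so Remark \ref{remark5} legitimately supplies the properness that Honari's theorem requires; the corollary then simply quotes Honari's characterization, in which the asymptotic-resemblance leg carries properness as part of its statement. To repair your argument, replace your last arrow accordingly: either cite Honari's result in the form ``$f$ is coarse if and only if $f$ is a proper asymptotic resemblance map'' and read the corollary's third condition in that sense, or route the equivalence as the paper does---coarse $\Leftrightarrow$ coarse proximity via Proposition \ref{coarseequalsproximity}, and coarse $\Leftrightarrow$ asymptotic resemblance directly from Honari's Proposition $2.3$---rather than attempting the unproved implication from relation-preservation alone. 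Your secondary point about empty and bounded sets (Proposition \ref{metricconsistency} applying only to nonempty subsets) is handled correctly and matches the paper.
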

\begin{proof}
This follows from Proposition \ref{coarseequalsproximity} and Proposition $2.3$ of \cite{Honari}.
\end{proof}

The following corollary shows that if $X$ is a metric space, then any coarse proximity map copreserves coarse neighborhoods.
\begin{corollary}
	Let $(X,d_{1}),(Y,d_{2})$ be metric spaces and $h:X\rightarrow Y$ a coarse proximity map. If $A,B\subseteq Y$ such that $A\ll B$ with respect to the metric coarse proximity structure induced by $d_2,$ then $h^{-1}(A)\ll h^{-1}(B)$ with respect to the metric coarse proximity structure on $X$ induced by $d_1.$
\end{corollary}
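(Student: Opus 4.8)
The plan is to observe that this corollary is an immediate consequence of two results already established, and so the proof reduces to chaining them together. The key point is that the hypothesis on $h$ — being a coarse proximity map — can be converted into the stronger-looking hypothesis of being a coarse map, at which point a copreservation result already in hand finishes the argument.

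First I would invoke Proposition \ref{coarseequalsproximity}, which asserts that for metric coarse proximity spaces the notions of coarse map and coarse proximity map coincide. Since $h:X\rightarrow Y$ is given to be a coarse proximity map between the metric coarse proximity spaces $(X,\mathcal{B}_{d_1},{\bf b}_1)$ and $(Y,\mathcal{B}_{d_2},{\bf b}_2)$, this proposition upgrades $h$ to a genuine coarse map between the metric spaces $(X,d_1)$ and $(Y,d_2)$. This is the only nontrivial input, and it was proven earlier.

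Once $h$ is known to be a coarse map, the desired conclusion is precisely the content of Proposition \ref{copreservingneighborhoods}: coarse maps copreserve coarse neighborhoods, so that $A\ll B$ in $(Y,d_2)$ implies $h^{-1}(A)\ll h^{-1}(B)$ in $(X,d_1)$. Applying this directly gives the statement. (One should keep in mind that in the metric setting coarse neighborhoods and asymptotic neighborhoods coincide, which is exactly the framework in which Proposition \ref{copreservingneighborhoods} is phrased, so the hypotheses line up without any adjustment.)

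I do not anticipate any real obstacle here, since all the substantive work — both the equivalence of the two classes of maps and the copreservation property for coarse maps — has already been carried out in the earlier propositions. The only thing requiring care is confirming that the notion of coarse neighborhood appearing in the corollary is the same object addressed by Proposition \ref{copreservingneighborhoods}, which it is. Alternatively, one could mimic the proof of Proposition \ref{copreservingneighborhoods} directly in the coarse proximity language, using condition (ii) of the definition of a coarse proximity map in place of the coarseness of $h$ to run the same contradiction argument; however, this is unnecessary given the equivalence supplied by Proposition \ref{coarseequalsproximity}.
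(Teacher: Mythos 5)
Your proposal is correct and matches the paper's own proof exactly: the paper likewise deduces the corollary immediately from Proposition \ref{coarseequalsproximity} (coarse maps and coarse proximity maps coincide for metric spaces) combined with Proposition \ref{copreservingneighborhoods} (coarse maps copreserve coarse neighborhoods). Nothing further is needed.
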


\begin{proof}
This is an immediate consequence of Proposition \ref{coarseequalsproximity} and Proposition \ref{copreservingneighborhoods}.
\end{proof}

As is usual for coarse topology, the morphisms in the category of coarse proximity spaces will not simply be coarse proximity maps, but instead equivalence classes thereof. We take our definition of closeness to be aesthetically similar to the definition of closeness for maps whose codomain is an asymptotic resemblance space, as in \cite{Honari}.

\begin{definition}\label{coarsecloseness}
	Let $X$ be a set and $(Y,\mathcal{B},{\bf b})$ a coarse proximity space. Two functions $f,g:X \to Y$ are {\bf close}, denoted $f\sim g$, if for all $A \subseteq X$
	\[f(A)\phi g(A),\]
	where $\phi$ is the weak asymptotic resemblance relation induced by the coarse proximity structure ${\bf b}.$
\end{definition}

\begin{remark}
Notice that since $\phi$ is an equivalence relation, the closeness relation from Definition \ref{coarsecloseness} is an equivalence relation. We will denote the equivalence class of a function $f$ by $[f].$
\end{remark}

There are at least $3$ ways to define closeness relation on maps from $X$ to $Y.$ If $Y$ is a coarse proximity space, we can define the closeness relation with respect to that relation, as in Definition \ref{coarsecloseness}. If $Y$ is a metric space, then we can define the closeness relation with respect to that binary operation, as in Definition \ref{coarselyclose}. Finally, if $Y$ is an asymptotic resemblance space, we can define the closeness relation with respect to that relation, as in \cite{Honari}. The following proposition shows that in the case of metric spaces, all of these definitions of closeness coincide. To easily distinguish between the closeness relations, for the remainder of this section we will say that $f$ and $g$ are proximally close if they satisfy Definition \ref{coarsecloseness}, coarsely close if they satisfy Definition \ref{coarselyclose}, and asymptotically close if they satisfy Definition $2.15$ of \cite{Honari}.

\begin{proposition}\label{equivalenceofcloseness}
	Let $X$ be a set, $(Y,d)$ a metric space, and $f,g:X \to Y$ two functions. Then the following are equivalent:
	\begin{enumerate}[(i)]
	\item $f$ and $g$ are proximally close, \label{equivalent1}
	\item $f$ and $g$ are asymptotically close, \label{equivalent2}
	\item $f$ and $g$ are coarsely close. \label{equivalent3}
	\end{enumerate}
\end{proposition}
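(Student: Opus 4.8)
The plan is to establish (\ref{equivalent1})$\iff$(\ref{equivalent2}) directly, the easy implication (\ref{equivalent3})$\Rightarrow$(\ref{equivalent1}), and finally the substantive implication (\ref{equivalent2})$\Rightarrow$(\ref{equivalent3}); together these give all three equivalences, with essentially all of the work in the last step.

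The equivalence of (\ref{equivalent1}) and (\ref{equivalent2}) is almost immediate from Proposition \ref{metricconsistency}. Both conditions assert, for every $A\subseteq X$, a relation between the images $f(A)$ and $g(A)$, one using $\phi$ and the other using $\lambda_d$. Since $f(A)=\emptyset$ precisely when $A=\emptyset$, and likewise for $g$, the only pair of subsets on which $\phi$ and $\lambda_d$ disagree, namely $\emptyset$ versus a nonempty bounded set, never occurs as a pair $(f(A),g(A))$: for $A=\emptyset$ both images are empty and both relations hold, while for $A\neq\emptyset$ both images are nonempty and Proposition \ref{metricconsistency} gives $f(A)\,\phi\,g(A)\iff d_H(f(A),g(A))<\infty\iff f(A)\,\lambda_d\,g(A)$. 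I would record this to obtain (\ref{equivalent1})$\iff$(\ref{equivalent2}). The implication (\ref{equivalent3})$\Rightarrow$(\ref{equivalent1}) is equally direct: if $d(f(x),g(x))<C$ for all $x$, then for every $A$ each point of $f(A)$ is within $C$ of a point of $g(A)$ and conversely, so $d_H(f(A),g(A))\le C<\infty$; together with Proposition \ref{metricconsistency} this yields $f(A)\,\phi\,g(A)$ for all $A$, which is proximal closeness.

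The substantive implication is (\ref{equivalent2})$\Rightarrow$(\ref{equivalent3}), which I would prove by contraposition. Fix a basepoint $y_0\in Y$. Assuming $f$ and $g$ are not coarsely close, I first pick a sequence $(x_n)$ with $d(f(x_n),g(x_n))\ge n$, so that $\delta_n:=d(f(x_n),g(x_n))\to\infty$ along every infinite subset of indices; the aim is to build a single $A$ with $d_H(f(A),g(A))=\infty$, contradicting asymptotic closeness. The obstruction is that a large pointwise gap $\delta_n$ does not by itself make the image sets Hausdorff-far, since $f(x_n)$ could be near $g(x_m)$ for some $m\neq n$. To handle this I split into two cases. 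If some bounded set $D\subseteq \mathscr{B}(y_0,r)$ contains $f(x_n)$ for infinitely many $n$ (or symmetrically contains $g(x_n)$ for infinitely many $n$), then letting $A$ be that set of indices makes $f(A)$ bounded while $g(A)$ is unbounded, since $d(g(x_n),y_0)\ge \delta_n-r\to\infty$; hence $d_H(f(A),g(A))=\infty$ and we are done.

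In the remaining case both $f(x_n)\to\infty$ and $g(x_n)\to\infty$, meaning each image leaves every bounded set for all but finitely many $n$. Here I would select a subsequence $(x_{n_k})$ greedily: having chosen $x_{n_1},\dots,x_{n_{k-1}}$, the finitely many previously produced image points form a bounded set, so (because both images escape to infinity) all but finitely many indices $n$ with $\delta_n\ge k$ have both $f(x_n)$ and $g(x_n)$ at distance at least $k$ from every previously produced image point; I pick such an $n=n_k$. Taking $A=\{x_{n_k}\}$, a bookkeeping check shows $d(g(x_{n_k}),f(x_{n_j}))\ge k$ for $j<k$ (guaranteed when $x_{n_k}$ was chosen), for $j=k$ (this is $\delta_{n_k}\ge k$), and for $j>k$ (guaranteed when $x_{n_j}$ was chosen, since then $j>k$), so that $d(g(x_{n_k}),f(A))\ge k\to\infty$ and therefore $d_H(f(A),g(A))=\infty$. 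In either case asymptotic closeness fails, completing the contrapositive and hence all the equivalences. I expect this greedy selection, and verifying that the chosen $g$-images stay uniformly far from the entire set $f(A)$, to be the main difficulty.
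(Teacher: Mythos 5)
Your proposal is correct, but it is substantially more self-contained than the paper's proof. For the equivalence of (\ref{equivalent1}) and (\ref{equivalent2}) you and the paper do the same thing: invoke Proposition \ref{metricconsistency} to identify $\phi$ with $\lambda_d$ on nonempty sets (you additionally dispose of the empty-set case explicitly, which the paper glosses over; your observation that $(f(A),g(A))$ is never a mixed empty/nonempty pair is the right way to do it). The real divergence is in (\ref{equivalent2})$\iff$(\ref{equivalent3}): the paper proves nothing here, simply citing Proposition 2.16 of \cite{Honari}, whereas you reconstruct that result from scratch. Your contrapositive argument is sound: the dichotomy between some image subsequence being trapped in a bounded set (where $f(A)$ bounded and $g(A)$ unbounded immediately forces $d_H(f(A),g(A))=\infty$) and both image sequences escaping to infinity is exhaustive, and in the second case the greedy selection works because at stage $k$ only finitely many indices land in the bounded $k$-neighborhood of the finitely many previously produced image points. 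Your three-way check $j<k$, $j=k$, $j>k$ is the key bookkeeping, and the $j>k$ case is correctly covered since $g(x_{n_k})$ is among the "previously produced" points when $x_{n_j}$ is chosen, giving distance at least $j>k$; the resulting bound $d(g(x_{n_k}),f(A))\geq k$ indeed rules out $g(A)\subseteq\mathscr{B}(f(A),m)$ for every finite $m$. Your direct (\ref{equivalent3})$\Rightarrow$(\ref{equivalent1}) via $d_H(f(A),g(A))\leq C$ then closes the cycle, so the implication structure (a cycle rather than the paper's two biconditionals) also differs. What each approach buys: the paper's version is two sentences long and leans on the literature; yours makes the proposition independent of \cite{Honari}'s Proposition 2.16, with Proposition \ref{metricconsistency} as the only nontrivial input, at the cost of the escape-to-infinity construction you carry out.
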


\begin{proof}
Since in the case of metric spaces asymptotic resemblance induced by the metric and the $\phi$ relation coincide for nonempty sets (see Proposition \ref{metricconsistency}), the closeness relation from Definition \ref{coarsecloseness} (i.e., the definition of proximally close) coincides with the closeness relation defined in \cite{Honari} (i.e., the definition of asymptotically close). This shows the equivalence of (\ref{equivalent1}) and (\ref{equivalent2}). The equivalence of (\ref{equivalent2}) and (\ref{equivalent3}) is the statement of Proposition 2.16 in \cite{Honari}.
\end{proof}

\begin{remark}
Thanks to the above proposition, whenever we deal with metric spaces, the sentence "closeness class of a function $f$" is unambiguous.
\end{remark}

\begin{corollary}\label{extraequivalenceofcloseness}
	Let $f,g:(X,d_{1})\rightarrow(Y,d_{2})$ be maps between metric spaces. Then the following are equivalent:
	\begin{enumerate}[(i)]
	\item $f$ and $g$ are coarse proximity maps and are proximally close,
	\item $f$ and $g$ are asymptotic resemblance maps and are asymptotically close,
	\item $f$ and $g$ are coarse maps and are coarsely close.
	\end{enumerate}
\end{corollary}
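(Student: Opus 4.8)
The plan is to observe that the statement is nothing more than the conjunction of two equivalences already established in the excerpt, one governing the "map" half of each condition and one governing the "closeness" half. First I would invoke Corollary \ref{allthesame}, applied separately to $f$ and to $g$: it asserts that for a single function between metric spaces, being a coarse map, being a coarse proximity map, and being an asymptotic resemblance map are pairwise equivalent. Consequently the three map-hypotheses appearing in (i), (ii), and (iii), namely "$f$ and $g$ are coarse proximity maps," "$f$ and $g$ are asymptotic resemblance maps," and "$f$ and $g$ are coarse maps," are mutually equivalent.

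Next I would invoke Proposition \ref{equivalenceofcloseness}, taking the set there to be the underlying set of $(X,d_1)$ and the metric space there to be $(Y,d_2)$. Crucially, that proposition is stated for arbitrary functions $f,g\colon X\to Y$ with no assumption that they are coarse, and it shows that being proximally close, being asymptotically close, and being coarsely close are pairwise equivalent. Hence the three closeness-clauses appearing in (i), (ii), and (iii) are mutually equivalent as well.

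Finally I would combine the two: each of (i), (ii), (iii) is precisely the conjunction of a map-type condition with a closeness-type condition, and within each of these two families the three variants are equivalent (the map conditions by Corollary \ref{allthesame}, the closeness conditions by Proposition \ref{equivalenceofcloseness}). Therefore the three conjunctions are equivalent, which is exactly the claim. I do not expect a genuine obstacle here; the only point requiring a moment's care is that Proposition \ref{equivalenceofcloseness} establishes the closeness equivalences independently of whether the maps are coarse, so the two equivalences really may be applied separately and then conjoined without circularity.
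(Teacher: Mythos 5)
Your proposal is correct and matches the paper's proof exactly: the paper also derives the corollary immediately from Corollary \ref{allthesame} (for the map conditions) and Proposition \ref{equivalenceofcloseness} (for the closeness conditions). Your added observation that Proposition \ref{equivalenceofcloseness} applies to arbitrary functions, so no circularity arises, is a correct and worthwhile clarification of why the two equivalences combine cleanly.
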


\begin{proof}
This follows immediately from Proposition \ref{allthesame} and Proposition \ref{equivalenceofcloseness}.
\end{proof}

\begin{definition}
Let $(X,\mathcal{B}_{1},{\bf b}_{1})$ and $(Y,\mathcal{B}_{2},{\bf b}_{2})$ be coarse proximity spaces. We call a coarse proximity map $f: X \to Y$ a \textbf{proximal coarse equivalence} if there exists a coarse proximity map $g:Y\to X$ such that $g\circ f\sim id_{X}$ and $f\circ g\sim id_{Y}.$ We say that $(X,\mathcal{B}_{1},{\bf b}_{1})$ and $(Y,\mathcal{B}_{2},{\bf b}_{2})$ are \textbf{proximally coarse equivalent} if there exists a proximal coarse equivalence $f:X \to Y.$
\end{definition}

\begin{remark}
Notice that Proposition \ref{allthesame} and Proposition \ref{equivalenceofcloseness} also imply that the proximal coarse equivalence coincides with asymptotic equivalence (defined in \cite{Honari}) and coarse equivalence (defined in \cite{Roe}).
\end{remark}

To define a reasonable definition of composition of two closeness classes of coarse proximity maps, we need to know that coarse proximity functions preserve the $\phi$ relation.

\begin{proposition}\label{proximitymapspreserveweakasymptoticresemblance}
Let $(X,\mathcal{B}_{1},{\bf b}_{1})$ and $(Y,\mathcal{B}_{2},{\bf b}_{2})$ be coarse proximity spaces and let $f:X \to Y$ be a coarse proximity map. Let $\phi_1$ and $\phi_2$ be weak asymptotic resemblance relations induced by ${\bf b}_1$ and ${\bf b}_2$, respectively. Then for any $A,B \subseteq X,$ we have
\[A \phi_1 B \implies f(A) \phi_2 f(B).\]
\end{proposition}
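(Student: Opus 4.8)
The plan is to prove the statement by unwinding the definition of $\phi$ and exploiting the fact that $f$ preserves both boundedness and the coarse proximity relation ${\bf b}$. Recall that $A \phi_1 B$ means that for every unbounded $B' \subseteq B$ we have $A {\bf b}_1 B'$, and for every unbounded $A' \subseteq A$ we have $A' {\bf b}_1 B$. My goal is to verify the two analogous conditions for $f(A)$ and $f(B)$ in $Y$.

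First I would dispose of the bounded case. If $A$ and $B$ are bounded, then since $f$ is a coarse proximity map it sends bounded sets to bounded sets, so $f(A)$ and $f(B)$ are bounded; by Remark \ref{remark8} any two bounded sets are $\phi_2$ related, giving $f(A) \phi_2 f(B)$. By Remark \ref{remark8} again, $A \phi_1 B$ forces $A$ and $B$ to be simultaneously bounded or simultaneously unbounded, so the remaining case is when both are unbounded.

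For the unbounded case, I would check condition (i) for $f(A) \phi_2 f(B)$: let $B^* \subseteq f(B)$ be unbounded, and I must show $f(A) {\bf b}_2 B^*$. The natural move is to pull $B^*$ back. Consider $B' := B \cap f^{-1}(B^*)$; then $f(B') \supseteq B^*$ (in fact one can arrange $f(B') = B^*$ by choosing one preimage per point), and $B'$ is unbounded since $f$ sends bounded sets to bounded sets (Remark \ref{remark5}) and $B^*$ is unbounded. Because $A \phi_1 B$ and $B' \subseteq B$ is unbounded, we get $A {\bf b}_1 B'$. Applying property (ii) of a coarse proximity map yields $f(A) {\bf b}_2 f(B')$, and since $B^* \subseteq f(B')$, Lemma \ref{lemma1} gives $f(A) {\bf b}_2 B^*$, as desired. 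Condition (ii), namely that every unbounded $A^* \subseteq f(A)$ satisfies $A^* {\bf b}_2 f(B)$, follows by the identical argument with the roles of $A$ and $B$ interchanged.

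The main subtlety I expect is the passage from an unbounded subset $B^* \subseteq f(B)$ back to an unbounded subset $B' \subseteq B$ with $f(B') \supseteq B^*$: one must confirm that the preimage $B' = B \cap f^{-1}(B^*)$ is genuinely unbounded. This is where Remark \ref{remark5} is essential, since it guarantees that preimages of bounded sets under a coarse proximity map are bounded; equivalently, if $B'$ were bounded then $f(B')$ would be bounded, contradicting that it contains the unbounded set $B^*$. Everything else is a routine application of Lemma \ref{lemma1} and the defining properties of a coarse proximity map, so this preimage-unboundedness step is the only point requiring genuine care.
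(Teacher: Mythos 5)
Your proof is correct and takes essentially the same route as the paper: the paper argues by contradiction, pulling an unbounded $A'\subseteq f(A)$ with $A'\,\bar{{\bf b}}_{2}\,f(B)$ back to the unbounded set $A''=f^{-1}(A')\cap A$, while you run the identical pullback construction $B'=B\cap f^{-1}(B^{*})$ in direct (contrapositive-free) form. One small correction: the inclusion needed to invoke Lemma \ref{lemma1} is $f(B')\subseteq B^{*}$, not $B^{*}\subseteq f(B')$ as you cite; fortunately $f(B')=B^{*}$ holds automatically from the definition of $B'$ (no choice of preimages is needed), so the step is sound as written.
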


\begin{proof}
Let $A,B,$ and $f$ be as in the statement of the proposition. If $A$ and $B$ are bounded, then the result is trivial. So assume that $A$ and $B$ are unbounded. For contradiction assume that $f(A) \bar{\phi_2}f(B).$ Then there exists $A^{\prime} \subseteq f(A)$ such that $A^{\prime}$ is unbounded and $A' \bar{{\bf b}_2}f(B)$. Then $A'':=f^{-1}(A') \cap A$ is unbounded, $A'' \subseteq A$ and $A'' \bar{{\bf b}_1} B$ (because otherwise $f(A''){\bf b}_2 f(B),$ and since $f(A'') \subseteq f(A),$ $f(A){\bf b}_2 f(B)$), a contradiction to $A \phi_1 B.$
\end{proof}

The following proposition implies that if $f \sim g$, then $f$ is a coarse proximity map/equivalence if and only if $g$ is.

\begin{proposition}
Let $(X,\mathcal{B}_{1},{\bf b}_{1})$ and $(Y,\mathcal{B}_{2},{\bf b}_{2})$ be coarse proximity spaces. Let $f:X \to Y$ and $g:X \to Y$ be two close functions. If $f$ is a coarse proximity map, then so is $g$. If $f$ is a proximal coarse equivalence, then so is $g.$
\end{proposition}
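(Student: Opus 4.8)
The plan is to prove the two claims separately, each by a direct argument exploiting the fact that closeness means $f(A)\,\phi_2\,g(A)$ for every $A\subseteq X$, together with the two results just established: Proposition \ref{proximitymapspreserveweakasymptoticresemblance} (coarse proximity maps preserve $\phi$) and Corollary \ref{corollaryforsection7} ($\phi$-equivalent arguments yield equivalent coarse closeness, i.e. if $A\,\phi_2\,C$ and $B\,\phi_2\,D$ then $A\,{\bf b}_2\,B\iff C\,{\bf b}_2\,D$). First I would show that if $f$ is a coarse proximity map and $f\sim g$, then $g$ is a coarse proximity map. For axiom (i) of a coarse proximity map, I note that $g$ sends bounded sets to bounded sets: given $B\in\mathcal{B}_1$, we have $f(B)\in\mathcal{B}_2$ by hypothesis, and since $f(B)\,\phi_2\,g(B)$, Remark \ref{remark8} forces $g(B)\in\mathcal{B}_2$ (a bounded set is $\phi$-related only to bounded sets). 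For axiom (ii), suppose $A\,{\bf b}_1\,B$. Then $f(A)\,{\bf b}_2\,f(B)$ because $f$ is a coarse proximity map. Since $f(A)\,\phi_2\,g(A)$ and $f(B)\,\phi_2\,g(B)$, Corollary \ref{corollaryforsection7} immediately gives $g(A)\,{\bf b}_2\,g(B)$, which is exactly what we need.

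Next I would handle the statement about proximal coarse equivalences. Suppose $f$ is a proximal coarse equivalence with coarse proximity inverse $h:Y\to X$, so that $h\circ f\sim id_X$ and $f\circ h\sim id_Y$. By the first part of the proposition, $g$ is already known to be a coarse proximity map, so it remains to produce a two-sided inverse up to closeness. The natural candidate is the same $h$. The plan is to verify $h\circ g\sim id_X$ and $g\circ h\sim id_Y$ by transporting the corresponding statements for $f$ across the closeness $f\sim g$. The key technical step is to show that closeness is preserved under composition with a coarse proximity map on the appropriate side; concretely, if $f\sim g$ and $h$ is a coarse proximity map, then $h\circ f\sim h\circ g$, and symmetrically $g\circ h\sim f\circ h$.

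For the left-composition claim $h\circ f\sim h\circ g$, fix $A\subseteq X$. We have $f(A)\,\phi_2\,g(A)$ by closeness of $f$ and $g$, and applying Proposition \ref{proximitymapspreserveweakasymptoticresemblance} to the coarse proximity map $h$ gives $h(f(A))\,\phi_1\,h(g(A))$, i.e. $(h\circ f)(A)\,\phi_1\,(h\circ g)(A)$; since $A$ was arbitrary, $h\circ f\sim h\circ g$. For the right-composition claim $g\circ h\sim f\circ h$, fix $A\subseteq Y$ and apply closeness of $f,g$ to the set $h(A)\subseteq X$ to obtain $f(h(A))\,\phi_2\,g(h(A))$, i.e. $(f\circ h)(A)\,\phi_2\,(g\circ h)(A)$; this holds for all $A$, so $f\circ h\sim g\circ h$. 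Now I would combine these with the hypotheses using transitivity of $\sim$: from $h\circ g\sim h\circ f\sim id_X$ we get $h\circ g\sim id_X$, and from $g\circ h\sim f\circ h\sim id_Y$ we get $g\circ h\sim id_Y$. Hence $h$ serves as a coarse proximity inverse for $g$, so $g$ is a proximal coarse equivalence.

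The main obstacle is the right-composition step, which is the only place the argument is genuinely asymmetric: left composition is handled cleanly by Proposition \ref{proximitymapspreserveweakasymptoticresemblance}, but for right composition I cannot invoke that proposition, and must instead observe that closeness of $f$ and $g$ is a statement quantified over \emph{all} subsets of $X$, so it applies in particular to the subsets of the form $h(A)$. Once this quantifier observation is in place, the rest is routine bookkeeping with the transitivity of the equivalence relation $\sim$ and the two cited results.
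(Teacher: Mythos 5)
Your proposal is correct and follows essentially the same route as the paper's proof: the first claim via Remark \ref{remark8} and Corollary \ref{corollaryforsection7}, and the second by reusing the same coarse proximity inverse, handling left composition with Proposition \ref{proximitymapspreserveweakasymptoticresemblance} and right composition by applying the closeness hypothesis to sets of the form $h(A)$, then concluding by transitivity of $\sim$. Your write-up is in fact slightly cleaner, since you make the transitivity steps explicit where the paper leaves them implicit (and the paper's final display contains a typo, $(f_1\circ f_1)$ for $(f\circ f_1)$, which your version avoids).
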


\begin{proof}
Let $\phi_1$ and $\phi_2$ be weak asymptotic resemblance relations induced by ${\bf b}_1$ and ${\bf b}_2$, respectively. Let us first assume that $f$ is a coarse proximity map. Let $B \subseteq X$ be bounded. Since $f$ is a coarse proximity map, $f(B)$ is bounded. Since $f(B) \phi_2 g(B),$ Remark \ref{remark8} implies that $g(B)$ is bounded. Now let $A, C \subseteq X$ and assume $A{\bf b}_1C.$ Since $f$ is a coarse proximity map, $f(A){\bf b}_2f(C).$ Since $g$ is close to $f$, we have that $f(A) \phi_2 g(A)$ and $f(C) \phi_2 g(C).$ Then Corollary \ref{corollaryforsection7} implies that $g(A){\bf b}_2g(C).$ Thus, $g$ is a coarse proximity map.

Now assume that $f$ is a proximal coarse equivalence, i.e., there exists a coarse proximity map $f_1:Y\to X$ such that $f_1\circ f\sim id_{X}$ and $f\circ f_1\sim id_{Y}.$ We will show that $f_1\circ g\sim id_{X}$ and $g\circ f_1\sim id_{Y}.$ To see that $f_1\circ g\sim id_{X},$ let $A \subseteq X.$ Then since $g \sim f,$ we have that $g(A) \phi_2 f(A).$ Since $f_1$ is a coarse proximity map, Proposition \ref{proximitymapspreserveweakasymptoticresemblance} implies that $(f_1(g(A)) \phi_1 (f_1(f(A)).$ Since $A$ was arbitrary, this implies that
\[(f_1\circ g) \sim (f_1\circ f) \sim id_X.\]
To see that $g \circ f_1\sim id_{Y}$, let $C\subseteq Y.$ Since $g$ is close to $f,$ we have $g(f_1(C)) \phi_2 f(f_1(C)).$ Since $C$ was arbitrary, this implies that
\[(g\circ f_1) \sim (f_1\circ f_1) \sim id_Y. \qedhere\]
\end{proof}

\begin{proposition}\label{compositionofcoarsnessclasses}
Let $(X,\mathcal{B}_{1},{\bf b}_{1}), (Y,\mathcal{B}_{2},{\bf b}_{2}),$ and $(Z,\mathcal{B}_{3},{\bf b}_{3})$ be coarse proximity spaces. Let $f: X \to Y$ and $g:Y \to Z$ be coarse proximity functions and let $[f]$ and $[g]$ be respective closeness classes. Then the operation $[f] \circ [g]:= [f \circ g]$ is well-defined.
\end{proposition}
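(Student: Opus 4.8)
The plan is to unwind what "well-defined" means here: I must show that the closeness class of the composite depends only on the classes $[f]$ and $[g]$, not on the chosen representatives. So I would begin by fixing coarse proximity maps $f,f':X\to Y$ with $f\sim f'$ and $g,g':Y\to Z$ with $g\sim g'$, and aim to prove that the two composites (each a map $X\to Z$ sending $x\mapsto g(f(x))$, resp. $x\mapsto g'(f'(x))$) are close. First I would invoke Remark \ref{remark13} to note that each composite is again a coarse proximity map, so that the bracket notation $[f\circ g]$ is meaningful; after that, the entire content of the proposition reduces to verifying that the two composites are close in the sense of Definition \ref{coarsecloseness}.

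To carry this out, I would fix an arbitrary subset $A\subseteq X$ and chain three facts. By Definition \ref{coarsecloseness}, $f\sim f'$ gives $f(A)\,\phi_2\,f'(A)$. Since $g$ is a coarse proximity map, Proposition \ref{proximitymapspreserveweakasymptoticresemblance} lets me transport this relation forward through $g$ to obtain $g(f(A))\,\phi_3\,g(f'(A))$. Separately, applying $g\sim g'$ to the subset $f'(A)\subseteq Y$ yields $g(f'(A))\,\phi_3\,g'(f'(A))$. Because $\phi_3$ is an equivalence relation (Theorem \ref{asymptoticresemblance}), transitivity chains these into $g(f(A))\,\phi_3\,g'(f'(A))$, which is exactly the statement that the two composites agree on $A$ up to $\phi_3$. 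As $A$ was arbitrary, the composites are close, so their common closeness class depends only on $[f]$ and $[g]$.

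I expect the only genuine subtlety to be the step that pushes the closeness of $f$ and $f'$ across $g$: this requires that $g$ respect the weak asymptotic resemblance $\phi$, which is precisely Proposition \ref{proximitymapspreserveweakasymptoticresemblance} and crucially uses that $g$ is a coarse proximity map and not merely an arbitrary function. Everything else is bookkeeping, namely the unpacking of Definition \ref{coarsecloseness}, the transitivity of $\phi_3$, and the appeal to Remark \ref{remark13}. The one point to watch is the composition order forced by the domains and codomains, so that the intermediate object $f'(A)$ is genuinely a subset of $Y$ on which $g$ and $g'$ can be compared.
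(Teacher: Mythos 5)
Your proof is correct and follows essentially the same route as the paper's: you fix representatives, invoke Remark \ref{remark13} so the composites are coarse proximity maps, and chain $f(A)\,\phi_2\,f'(A)$ forward via Proposition \ref{proximitymapspreserveweakasymptoticresemblance} and then apply $g\sim g'$ to $f'(A)$, concluding by transitivity of $\phi_3$. This matches the paper's argument step for step, including the one genuinely substantive point you correctly flag, namely that pushing closeness across $g$ requires $g$ to be a coarse proximity map.
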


\begin{proof}
Let $f$ and $g$ be as in the statement of the proposition. Let $\phi_2$ and $\phi_3$ be weak asymptotic resemblance relations induced by ${\bf b}_2$ and ${\bf b}_3,$ respectively. Let $f' \in [f]$ and $g' \in [g].$ By Remark \ref{remark13}, $g \circ f$ and $g' \circ f'$ are coarse proximity maps from $X$ to $Z.$ Let us show that $g \circ f$ and $g' \circ f'$ are close, which will show that $[g \circ f]=[g' \circ f'].$
Let $A$ be a set. Since $f \sim f'$, we have that $f(A) \phi_2 f'(A).$ Therefore, we have
\[\Big(g(f(A))\Big) \phi_3 \Big(g(f'(A))\Big) \phi_3 \Big(g'(f'(A))\Big),\]
where the first equivalence follows from Proposition \ref{proximitymapspreserveweakasymptoticresemblance} and the second equivalence follows from $g \sim g'.$ Since $\phi_3$ is an equivalence relation, this completes the proof that $[g \circ f]=[g' \circ f'].$
\end{proof}

\begin{definition}\label{categoryofcoarseproximityspaces}
The collection of coarse proximity spaces and closeness classes of coarse proximity maps (with the composition of morphisms defined as in Proposition \ref{compositionofcoarsnessclasses}) makes up the category $C_{\bf b}$ of coarse proximity spaces.
\end{definition}

\begin{remark}
Associativity of morphisms in the above definition follows from the associativity of composition of functions. The identity morphism is the equivalence class of the identity map.
\end{remark}

\begin{remark}
Notice that if $(X,\mathcal{B}_{1},{\bf b}_{1})$ and $(Y,\mathcal{B}_{2},{\bf b}_{2})$ are coarse proximity spaces and $f:X \to Y$ is a proximal coarse equivalence, then $[f]$ is an equivalence in the category of coarse proximity spaces.
\end{remark}

\section{Proximity at Infinity}\label{hyperspace}

In this section, we construct a natural small-scale proximity structure on the set of unbounded subsets of a metric space. We also show how this structure naturally induces a small-scale proximity on the equivalence classes of the weak asymptotic resemblance induced by the metric. We call this space the proximity space at infinity. We then proceed to show that the construction is functorial, making up a functor from the category of unbounded metric spaces whose morphisms are closeness classes of coarse proximity maps (or coarse maps) to the category of proximity spaces whose morphisms are proximity maps. The idea of defining topological structures on equivalence classes of unbounded sets has been utilized previously. In \cite{Hartmann}, a functor from metric spaces to totally bounded metric spaces, called "spaces of ends", is constructed. For a variety of unbounded metric spaces the space of ends is empty. As we will see, the proximity space at infinity for every unbounded metric space is always nonempty. Our construction was inspired by considering the Vietoris topology on the hyperspace of the Higson corona of a proper metric space.

\begin{definition}
	A sequence $f:\mathbb{N}\rightarrow\mathbb{R}$ is called \textbf{adequate} if it is positive and $f(n)-f(n-1)>n+1$ for all $n>1.$
\end{definition}

\begin{remark}
Notice that if $f$ and $g$ are adequate sequences, then so is $h:=\max{\{f,g\}}.$
\end{remark}

\begin{definition}\label{importantdefinition}
	Let $(X,d)$ be a metric space and let $x_{0}$ be a point in $X.$ If $A\subseteq X$ is a set and $f:\mathbb{N}\rightarrow\mathbb{R}$ is an adequate sequence, then we define the \textbf{coarse neighborhood of $A$ of radius $f$ relative to $x_{0}$}, denoted $U_{x_{0}}(A,f)\subseteq X,$ in the following way:	
\begin{equation*}
\begin{split}
A_{0}^{f} & :=A,\\
A_{n}^{f} & := \mathscr{B}(A,n)\setminus \mathscr{B}(x_{0},f(n)),\\
U_{x_{0}}(A,f) & :=\bigcup_{n \geq 0} A_{n}^{f}.
\end{split}
\end{equation*}

To simplify notation, when the base point is clear from the context we will denote $\mathscr{B}(x_{0},f(n))$ by $\mathscr{B}_{f(n)}$ and $U_{x_{0}}(A,f)$ by $U(A,f).$ In this notation, the definition of $U(A,f)$ becomes
\begin{equation*}
\begin{split}
A_{0}^{f} & :=A,\\
A_{n}^{f} & := \mathscr{B}(A,n)\setminus \mathscr{B}_{f(n)},\\
U(A,f) & :=\bigcup_{n \geq 0} A_{n}^{f}.
\end{split}
\end{equation*}
\end{definition}
 
 The reader is encouraged to compare the above definition to the construction of the coarse neighborhood in section \ref{mainbody}. As expected, we will show that a coarse neighborhood of $A$ of radius $f$ relative to $x_{0}$ is really a coarse neighborhood.

\begin{proposition}
	Given a metric space $(X,d)$, a point $x_{0} \in X$, and a set $A\subseteq X,$ we have $A\ll U(A,f)$ for every adequate sequence $f$.
\end{proposition}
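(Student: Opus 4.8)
The plan is to unwind the definition of a coarse neighborhood: by definition $A\ll U(A,f)$ means precisely $A\bar{\bf b}(X\setminus U(A,f))$, so it suffices to show that $A$ and $X\setminus U(A,f)$ are not coarsely close. This is exactly the same type of statement as the claim $(X\setminus E)\bar{\bf b}B$ established in the final paragraph of the proof of Theorem \ref{theorem1}, with $A$ here playing the role of $B$ there and the ball radii $f(n)$ playing the role of the $r_n$; I would therefore follow that argument essentially verbatim.

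Concretely, I would argue by contradiction. Suppose $A{\bf b}(X\setminus U(A,f))$. Using the equivalent reformulation of coarse closeness in a metric space --- namely that $A{\bf b}C$ holds iff there is some $\epsilon<\infty$ such that every bounded set $D$ admits $a\in A\setminus D$ and $c\in C\setminus D$ with $d(a,c)<\epsilon$ --- applied to the bounded sets $\mathscr{B}_{f(n)}$, I obtain an $\epsilon<\infty$ so that for every $n\in\mathbb{N}$ there exist $a_{n}\in A\setminus\mathscr{B}_{f(n)}$ and $x_{n}\in(X\setminus U(A,f))\setminus\mathscr{B}_{f(n)}$ with $d(x_{n},a_{n})<\epsilon$.

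Now fix any $n>\epsilon$. Then $x_{n}\notin\mathscr{B}_{f(n)}$, while $d(x_{n},a_{n})<\epsilon<n$ with $a_{n}\in A$ shows $x_{n}\in\mathscr{B}(A,n)$. Hence
\[
x_{n}\in\mathscr{B}(A,n)\setminus\mathscr{B}_{f(n)}=A_{n}^{f}\subseteq U(A,f),
\]
contradicting the choice $x_{n}\in X\setminus U(A,f)$. Therefore $A\bar{\bf b}(X\setminus U(A,f))$, i.e. $A\ll U(A,f)$.

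I expect no serious obstacle here: the only point requiring care is the index bookkeeping that forces $x_{n}$ into the single level set $A_{n}^{f}$ --- one must extract $a_{n}$ and $x_{n}$ outside the \emph{same} ball $\mathscr{B}_{f(n)}$ and then invoke $n>\epsilon$ so that the distance bound $d(x_{n},a_{n})<\epsilon$ places $x_{n}$ within radius $n$ of $A$. It is worth noting that the adequacy hypothesis on $f$ (the growth condition $f(n)-f(n-1)>n+1$) is not actually needed for this particular statement; only the positivity of $f$ enters, and the growth condition will instead become relevant in the later results about these neighborhoods.
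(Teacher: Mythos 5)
Your proof is correct and follows essentially the same route as the paper's own argument: assume $A{\bf b}(X\setminus U(A,f))$, apply the metric reformulation of coarse closeness to the bounded sets $\mathscr{B}_{f(n)}$, and derive the contradiction $x_{n}\in A_{n}^{f}$ for any $n>\epsilon$. Your closing observation that only the positivity of $f$ (not the adequacy growth condition) is used here is also accurate; the growth condition indeed only enters in the later results such as Lemma \ref{lemmaforstarrefinement} and Proposition \ref{starrefinement}.
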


\begin{proof}
For contradiction assume that $A{\bf b}(X\setminus U(A,f)).$ Then there exists $\epsilon < \infty$ such that for all $n \in \mathbb{N},$ there exists $a_n \in A \setminus \mathscr{B}_{f(n)}$ and $x_n \in (X \setminus U(A,f)) \setminus \mathscr{B}_{f(n)}$ such that $d(a_n,x_n)<\epsilon.$ Choose $n$ large such that $\epsilon < n.$ Then $x_n \notin \mathscr{B}_{f(n)}$ and $d(a_n, x_n) < \epsilon < n.$ Thus, $x_n \in A_{n}^{f}$, contradicting the fact that $x_n \notin U(A,f).$ Therefore, by contradiction, $A\bar{\bf b}(X\setminus U(A,f)),$ i.e., $A\ll U(A,f).$
\end{proof}

The following definition and proposition justify why it is reasonable to restrict ourselves to considering only coarse neighborhoods of the form $U(A,f).$

\begin{definition}
Given a coarse proximity space $(X,\mathcal{B},{\bf b})$ and a set $A\subseteq X,$ we say that a collection $\mathcal{A}\subseteq 2^{X}$ of coarse neighborhoods of $A$ is a {\bf coarse neighborhood base} at $A$ if for every coarse neighborhood $D\subseteq X$ of $A$ there is $E\in\mathcal{A}$ such that 
	$$A\ll E \ll D.$$
\end{definition}

The following proposition shows that for any set $B,$ all $U(B,f)$ form a coarse neighborhood base at $B.$

\begin{proposition}\label{metricbase}
	Let $(X,d)$ be a metric space and $x_{0}\in X$ a point. For each set $B\subseteq X$ define $\mathcal{C}_{x_{0}}(B)$ to be the set of all coarse neighborhoods of the form $U(B,f),$ where $f$ is an adequate sequence. Then $\mathcal{C}_{x_{0}}(B)$ is a coarse neighborhood base at $B$. 
\end{proposition}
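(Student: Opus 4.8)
We must show that $\mathcal{C}_{x_{0}}(B)$, the collection of all coarse neighborhoods of the form $U(B,f)$ with $f$ adequate, is a coarse neighborhood base at $B$. That is, given any coarse neighborhood $D$ of $B$ (so $B \bar{\mathbf{b}} (X \setminus D)$), we must produce an adequate sequence $f$ so that $B \ll U(B,f) \ll D$.

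The plan is as follows. The left inclusion $B \ll U(B,f)$ is free: the preceding proposition already establishes that $A \ll U(A,f)$ for every adequate $f$, so whatever $f$ we end up constructing, $B \ll U(B,f)$ holds automatically. All the work is in arranging the right inclusion $U(B,f) \ll D$, i.e. $U(B,f) \bar{\mathbf{b}} (X \setminus D)$.

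Let me sketch the construction of $f$. Since $B \ll D$ means $B \bar{\mathbf{b}} (X \setminus D)$, by Lemma \ref{easierdisjointness} (the reformulation of asymptotic disjointness) applied to $B$ and $X \setminus D$, for every $n \in \mathbb{N}$ there is a bounded set, and hence a radius, past which $B \setminus \mathscr{B}_{x_0}$ is at distance greater than $n$ from $X \setminus D$. First I would extract from this a sequence of radii $s_n$ with the property that
\[
d\bigl(B \setminus \mathscr{B}(x_0, s_n),\, X \setminus D\bigr) > 2n + 1,
\]
say. I would then define $f(n)$ by taking $f(n)$ at least as large as $s_n$ and, simultaneously, large enough to force $f$ to be adequate, i.e. $f(n) - f(n-1) > n+1$; this is arranged by setting $f(n) = \max\{s_n,\, f(n-1) + n + 2\}$ recursively. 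The key design feature is that by the time the "radius-$n$ thickening" $\mathscr{B}(B,n)$ in the definition of $A_n^f$ becomes active, we have already moved out past $\mathscr{B}_{f(n)} \supseteq \mathscr{B}(x_0, s_n)$, where $B$ is genuinely far from $X \setminus D$.

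Now I would verify $U(B,f) \bar{\mathbf{b}} (X \setminus D)$ by contradiction, mirroring the estimate in the proof of Theorem \ref{theorem1}. Suppose $U(B,f) \,\mathbf{b}\, (X \setminus D)$; then there is $\epsilon < \infty$ so that for each $n$ we find $u_n \in U(B,f) \setminus \mathscr{B}_{f(n)}$ and $x_n \in (X \setminus D) \setminus \mathscr{B}_{f(n)}$ with $d(u_n, x_n) < \epsilon$. Since $u_n \in U(B,f)$ lies outside the large ball $\mathscr{B}_{f(m)}$ for the appropriate index, it belongs to some $B_m^f = \mathscr{B}(B,m) \setminus \mathscr{B}_{f(m)}$, so there is $b \in B$ with $d(b, u_n) < m$, and one checks using the adequacy gaps $f(m) - f(m-1) > m+1$ that $b$ itself lies outside $\mathscr{B}(x_0, s_m)$. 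Then
\[
d(b, x_n) \le d(b, u_n) + d(u_n, x_n) < m + \epsilon,
\]
while by the defining property of $s_m$ we have $d(b, x_n) > 2m + 1$ because $b \in B \setminus \mathscr{B}(x_0, s_m)$ and $x_n \in X \setminus D$. Choosing $n$ (and hence $m$) large enough that $2m + 1 > m + \epsilon$ produces the contradiction. I expect the main obstacle to be the bookkeeping that ties the activation of the radius-$m$ thickening to the radius $s_m$ at which $B$ is far from $X \setminus D$: one has to track exactly which $B_m^f$ the point $u_n$ falls into and confirm that the corresponding $b \in B$ is pushed out past $s_m$, using the adequacy condition to absorb the thickening radius $m$ against the gaps in $f$. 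Once that alignment is set up correctly, the triangle-inequality estimate closes the argument exactly as in the strong-axiom proof of Theorem \ref{theorem1}.
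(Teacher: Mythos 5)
Your proposal is correct and is essentially the paper's own argument: the paper proves this proposition by observing that the set $E$ constructed in the proof of Theorem \ref{theorem1}(iv) (applied with $A = X\setminus D$) is exactly a set of the form $U_{x_0}(B,f)$ for an adequate $f$ (the radii $r_n$ there satisfy $r_n - r_{n-1} > n+1$), with $B \ll E \ll D$ already established there, and your construction of $f$ from disjointness radii padded to adequacy, together with the largest-index bookkeeping and triangle-inequality contradiction, reproduces precisely that construction and verification. Your one deviation --- invoking the one-sided truncation of Lemma \ref{easierdisjointness}, so that only the point $b \in B$ (and not the point of $X\setminus D$) must be pushed past a truncation radius, with a linear gap $2n+1$ in place of the paper's $n^2$ --- mildly streamlines the estimate but does not change the route.
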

\begin{proof}
The statement is trivial if $B$ is bounded, so assume that $B$ is unbounded. Let $B\subseteq X$ be an unbounded set and $D\subseteq X$ a coarse neighborhood of $B$. Then $B\bar{\bf b}(X\setminus D)$. Set $A=(X\setminus D).$ Then the set $E$ from Theorem \ref{theorem1} is the desired coarse neighborhood such that $E \in \mathcal{C}_{x_{0}}(B)$ and $B \ll E \ll D.$
\end{proof}

Let us explore a few basic properties of coarse neighborhoods of the form $U(A,f).$
\begin{proposition}\label{basicproperties}
Let $(X,d)$ be a metric space, $x_{0}\in X$ a point, $f$ and $g$ adequate sequences, and $A$ and $B$ unbounded subsets of $X.$ Then the following are true:
\begin{enumerate}[(i)]
\item $A \subseteq U(A,f),$
\item if $A$ is bounded, then so is $U(A,f),$
\item if $B \subseteq A,$ then $U(B,f) \subseteq U(A,f),$
\item if $f \leq g,$ then $U(A,g) \subseteq U(A,f),$
\item $U(A,f) \cup U(B,f) = U(A \cup B,f),$
\item if $A\ll B,$ then there exists a bounded set $D$ such that $U(A,f) \setminus D \subseteq U(B,f).$
\end{enumerate}
\end{proposition}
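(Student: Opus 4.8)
The plan is to dispatch (i)--(v) directly from the definition of $U(A,f)$ and to reserve the real work for (vi). For (i), since $A_0^f = A$ by definition, $A \subseteq U(A,f)$ is immediate. For (iii), (iv), and (v) I would argue layer by layer and then take the union over $n$: if $B \subseteq A$ then $\mathscr{B}(B,n)\subseteq\mathscr{B}(A,n)$ forces $B_n^f \subseteq A_n^f$; if $f \leq g$ then $\mathscr{B}(x_0,f(n)) \subseteq \mathscr{B}(x_0,g(n))$ forces $A_n^g \subseteq A_n^f$; and the identity $\mathscr{B}(A\cup B,n) = \mathscr{B}(A,n)\cup\mathscr{B}(B,n)$ gives $(A\cup B)_n^f = A_n^f \cup B_n^f$. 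For (ii), if $A \subseteq \mathscr{B}(x_0,R)$ then $\mathscr{B}(A,n)\subseteq\mathscr{B}(x_0,R+n)$, so $A_n^f = \emptyset$ as soon as $f(n) \geq R+n$; since an adequate sequence grows superlinearly ($f(n)-f(n-1) > n+1$), this holds for all large $n$, leaving $U(A,f)$ a finite union of bounded sets.

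The substance is (vi). The key input is that $A \ll B$ gives, by Proposition \ref{propertiesofcoarseneighborhoods}(ii), that $A \setminus B$ is bounded, so there is an $R_0$ with $A \setminus B \subseteq \mathscr{B}(x_0,R_0)$; equivalently, every $a \in A$ with $d(a,x_0) \geq R_0$ already lies in $B$. I would then choose a threshold $M$ so that $f(m) - m \geq R_0$ for all $m \geq M$, which is possible because $f(m)-m$ is strictly increasing (its $m$-th difference is $f(m)-f(m-1)-1 > m > 0$) and tends to infinity, and set $D := \mathscr{B}(x_0,M+R_0)$, which is bounded.

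To verify $U(A,f)\setminus D \subseteq U(B,f)$, I would take $p \in U(A,f)$ with $d(p,x_0) \geq M+R_0$, so $p \in A_m^f$ for some $m$; that is, there is $a \in A$ with $d(p,a) < m$ (with $a = p$ in the degenerate layer $m = 0$) and $d(p,x_0) \geq f(m)$. The goal in every case is to show the witness $a$ already lies in $B$, for then $d(p,B) \leq d(p,a) < m$ places $p$ in $\mathscr{B}(B,m)$, and together with $d(p,x_0)\geq f(m)$ this yields $p \in B_m^f \subseteq U(B,f)$. To see $a \in B$ I would split on the size of $m$: if $m \geq M$ then $d(a,x_0) \geq d(p,x_0) - d(p,a) > f(m) - m \geq R_0$; if $m < M$ then $d(a,x_0) > (M+R_0) - m > R_0$. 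Either way $d(a,x_0) > R_0$, so $a \in B$, completing the inclusion.

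The main obstacle, and the reason the argument needs the two-case split rather than a single distance estimate, is that the index $m$ of the layer $A_m^f$ containing $p$ is not controlled by $d(p,x_0)$: a point far from $x_0$ can still be witnessed at a small scale $m$, in which case the superlinear growth of $f$ gives nothing, and one must instead use that $p$ itself lies outside $D$ to push the witness $a$ beyond $R_0$. Reconciling these two regimes through the single bounded set $D = \mathscr{B}(x_0,M+R_0)$ is the crux. A minor technical point to handle carefully is the degenerate layer $m=0$, where $A_0^f = A$ is not a genuine fattening and the witness must be taken to be $p$ itself; this case is absorbed into the small-$m$ estimate.
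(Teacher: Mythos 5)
Your proof is correct, and for parts (i)--(v) it matches the paper's (which simply declares them direct consequences of the definitions, citing $\mathscr{B}(A,n)\cup\mathscr{B}(B,n)=\mathscr{B}(A\cup B,n)$ for (v)); your explicit layer-by-layer verification, including the observation for (ii) that $A_n^f=\emptyset$ once $f(n)\geq R+n$, just fills in those routine details. For (vi), however, you take a genuinely different route. The paper's proof is a three-line formal deduction from the earlier parts: since $A\ll B$, Proposition \ref{propertiesofcoarseneighborhoods}(ii) gives a bounded $D'$ with $A\setminus D'\subseteq B$; then (iii) yields $U(A\setminus D',f)\subseteq U(B,f)$, (v) yields $U(A,f)\subseteq U(B,f)\cup U(D',f)$, and taking $D:=U(D',f)$, bounded by (ii), finishes. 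You instead run a direct metric argument: choosing $R_0$ with $A\setminus B\subseteq\mathscr{B}(x_0,R_0)$ and a threshold $M$ with $f(m)-m\geq R_0$ for $m\geq M$ (valid, since $f(m)-f(m-1)>m+1$ makes $f(m)-m$ strictly increasing and divergent), you show each $p\in A_m^f$ outside $D=\mathscr{B}(x_0,M+R_0)$ has its witness $a$ pushed beyond $R_0$ --- by the superlinear growth of $f$ when $m\geq M$, and by $p\notin D$ when $m<M$ --- so $a\in B$ and $p\in B_m^f$; your handling of the degenerate layer $m=0$ via $a=p$ is also sound. The trade-off: the paper's argument is shorter and exhibits (vi) as a purely formal consequence of (i)--(v), while yours is self-contained and quantitatively sharper, giving an explicit bounded set $D=\mathscr{B}(x_0,M+R_0)$ and showing each point lands in the layer of $U(B,f)$ of the \emph{same} index $m$, information the paper's structural proof does not track.
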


\begin{proof}
The first four properties are direct consequences of definitions. (v) follows from the fact that $\mathscr{B}(A,n) \cup \mathscr{B}(B,n) = \mathscr{B}(A \cup B, n).$ To show (vi), let $A\ll B.$ Then there exists a bounded set $D'$ such that $(A \setminus D') \subseteq B.$ Thus, by (iii), we have $U((A \setminus D'), f) \subseteq U(B,f).$ Thus, by (v), $U(A,f) \subseteq U(B,f) \cup U(D',f).$ Let $D=U(D',f).$ Then by (ii) $D$ is bounded, and we get that $U(A,f) \setminus D \subseteq U(B,f).$
\end{proof}

One could expect that if $A,B$ are subsets of a metric space such that $A\ll B,$ then for every adequate sequence $f$ one has $U(A,f)\ll U(B,f)$. However, this is not the case.

\begin{example}
Consider $\mathbb{R}^2.$ Let $A=B=\{(0,y) \mid y>0\}.$ Let $x_0$ be the origin and let $f$ and $g$ be two adequate sequences such that $f(n)<g(n)$ for all $n \in \mathbb{N}.$ Let $X= (\mathbb{R}^2 \setminus U(A,g)) \cup A.$ Then $A\ll B$ (Since $(X \setminus B)=(X \setminus U(A,g))$), but  it is not true that $U(A,f)\ll U(B,f)$ (since $U(A,f) = U(B,f)$ and $U(A,f)$ is unbounded in $X \setminus U(A,g)$).
\end{example}
 
 One could also expect that if $A,B$ are subsets of a metric space such that $A \phi B,$ then for every adequate sequence $f$ one has:
 \begin{enumerate}[(i)]
\item $U(A,f) \phi U(B,f),$
\item $C\ll U(A,f)$ if and only if $C\ll U(B,f)$ for any $C \subseteq X.$
\end{enumerate}
However, the following example shows that neither of these statements is true.
\begin{example}
	Let $1>\epsilon>0$, $A=\{(0,t)\in\mathbb{R}^{2}\mid t\geq 0\},\,B=\{(-1,t)\in\mathbb{R}^{2}\mid t\geq 0\}$ and $x_{0}=(0,0)$. Notice that $A$ and $B$ have finite Hausdorff distance. Define $f:\mathbb{N}\rightarrow\mathbb{N}$ by $f(n)=n^{3}.$ Then $f$ is an adequate sequence. For each $n\in\mathbb{N},$ define $x_{n}=((n-\epsilon),f(n)+\epsilon)$. Let $C=\{x_{n}\}_{n\in\mathbb{N}}.$ Let $X=A\cup B\cup C$ with the subspace metric inherited from $\mathbb{R}^{2}$. Then $C \subseteq U(A,f)$ and $C \cap U(B,f)= \emptyset.$ Notice that by this specific construction, there are bounded sets $D_{1}$ and $D_{2}$ such that $X=U(A,g)\cup D_{1}$ and $B\cup A=U(B,g)\cup D_{2}$. Consequently, since $X$ and $A\cup B$ do not have finite Hausdorff distance, neither do $U(A,g)$ and $U(B,g)$. Also, notice that $C\ll U(A,f)$ (since $(X \setminus U(A,f)$ is a bounded set), but it is not true that $C\ll U(B,f)$ (since $C \cap (X\setminus U(B,f)) = C,$ which is unbounded).
\end{example}

To be able to prove the "star-refinement" property of coarse neighborhoods, we need the following lemmas:

\begin{lemma}\label{lemmaforstarrefinement}
Let $(X,d)$ be a metric space, $A$ an unbounded subset of $X$, $x_{0}$ a point in $X,$ $f:\mathbb{N}\rightarrow\mathbb{R}$ an adequate sequence, and $n \in \mathbb{N}$ such that $n>1.$ Then 
\[d \Big((X\setminus U(A,f)) \setminus \mathscr{B}_{f(n)}, A \setminus \mathscr{B}_{f(n)}\Big)>n-1.\]
\end{lemma}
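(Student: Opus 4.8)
I want to show that points of $X \setminus U(A,f)$ lying outside $\mathscr{B}_{f(n)}$ are at distance more than $n-1$ from points of $A$ lying outside $\mathscr{B}_{f(n)}$. The natural strategy is to unwind the definition of $U(A,f) = \bigcup_{m \geq 0} A_m^f$ and argue by contradiction: suppose there are $x \in (X \setminus U(A,f)) \setminus \mathscr{B}_{f(n)}$ and $a \in A \setminus \mathscr{B}_{f(n)}$ with $d(x,a) \leq n-1$. I then want to produce an index $m$ for which $x \in A_m^f = \mathscr{B}(A,m) \setminus \mathscr{B}_{f(m)}$, contradicting $x \notin U(A,f)$.

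\emph{The key steps, in order.}
First I would fix the witnesses $x$ and $a$ as above and observe that since $d(x,a) \leq n-1 < n$, the point $x$ lies in $\mathscr{B}(A,n)$; this takes care of the ``$\mathscr{B}(A,\cdot)$'' half of the membership condition for $A_n^f$. Second, I need to check the ``$\setminus \mathscr{B}_{f(n)}$'' half, i.e.\ that $x \notin \mathscr{B}_{f(n)}$ — but this is given directly by the hypothesis $x \in (X \setminus U(A,f)) \setminus \mathscr{B}_{f(n)}$. Combining the two, $x \in \mathscr{B}(A,n) \setminus \mathscr{B}_{f(n)} = A_n^f \subseteq U(A,f)$, contradicting $x \notin U(A,f)$. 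This already seems to close the argument at radius exactly $n$, giving the bound $d(\cdots) \geq n$; the slightly weaker bound $n-1$ in the statement presumably provides the strict inequality needed downstream and leaves room for the edge case where $d(x,a)$ equals $n$ exactly, so I would keep the estimate as stated and verify the strictness carefully.

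\emph{The main obstacle.}
The one place to be careful is the interaction between the radius of the ball $\mathscr{B}(A,m)$ (which is the integer $m$) and the triangle-inequality slack: I have $d(x,a) \leq n-1$, so $x$ lands comfortably inside $\mathscr{B}(A,n)$ with room to spare, and I must make sure the ``$-1$'' is exactly what lets me conclude $x \in A_n^f$ rather than having to pass to a smaller or larger index where the subtracted ball $\mathscr{B}_{f(m)}$ might change. The adequacy condition $f(m) - f(m-1) > m+1$ is what guarantees the nested balls $\mathscr{B}_{f(m)}$ grow fast enough that no interference occurs, but for this particular lemma I expect the bound to follow already from the definition of $A_n^f$ without invoking adequacy heavily; the adequacy hypothesis is likely there for uniformity with the surrounding development. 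The genuine subtlety is only bookkeeping with strict versus non-strict inequalities, which I would resolve by tracking that $a \notin \mathscr{B}_{f(n)}$ forces $x$ to remain outside $\mathscr{B}_{f(n)}$ while still witnessing $x \in \mathscr{B}(A,n)$.
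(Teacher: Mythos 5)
Your proposal is correct and takes essentially the same route as the paper: assume for contradiction a close pair $x\in (X\setminus U(A,f))\setminus\mathscr{B}_{f(n)}$, $a\in A\setminus\mathscr{B}_{f(n)}$, observe $x\in\mathscr{B}(A,n)$ and $x\notin\mathscr{B}_{f(n)}$, hence $x\in A_{n}^{f}\subseteq U(A,f)$, a contradiction; you are also right that adequacy plays no real role here. One cosmetic fix: since the infimum need not be attained, the negation of the stated bound only yields witnesses with $d(x,a)<n$ (not $d(x,a)\leq n-1$) --- which is exactly what the paper uses and suffices verbatim for your argument.
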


\begin{proof}
For contradiction, assume that there exists $x \in (X\setminus U(A,f)) \setminus \mathscr{B}_{f(n)}$ and $a \in A \setminus \mathscr{B}_{f(n)}$ such that $d(x,a) < n.$ Then we have that $x \notin \mathscr{B}_{f(n)}$ and $x \in \mathscr{B}(A,n).$ Thus, $x \in A_n^f,$ a contradiction to $x \notin U(A,f).$
\end{proof}

For the remainder of the paper, we will use the following notation: for each $n,$ define
 \[C_n:= \mathscr{B}_{f(n)} \setminus \mathscr{B}_{f(n-1)}.\]

\begin{lemma}\label{furtherandfurther}
Let $(X,d)$ be a metric space, $A$ an unbounded subset of $X$, $x_{0}$ a point in $X,$ $f:\mathbb{N}\rightarrow\mathbb{R}$ an adequate sequence, and $n \in \mathbb{N}$ such that $n>1.$ If $x \in C_n$ and $y \in X$ such that $d(x,y)<n,$ then $y$ can only belong to $C_{n-1}, C_{n},$ or $C_{n+1}.$ In particular, $y \in \mathscr{B}_{f(n+1)}$ and $y \notin \mathscr{B}_{f(n-2)}.$
\end{lemma}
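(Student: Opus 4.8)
The plan is to unravel the definitions of $C_n$ and $C_{n-1}$ in terms of the balls $\mathscr{B}_{f(k)}$ and then estimate $d(x_0, y)$ from above and below using the triangle inequality, exploiting the adequacy condition $f(k) - f(k-1) > k + 1$ to show that the separation between consecutive annuli exceeds $n$. Recall that $x \in C_n = \mathscr{B}_{f(n)} \setminus \mathscr{B}_{f(n-1)}$ means $f(n-1) \leq d(x_0, x) < f(n)$. Since $d(x,y) < n$, the triangle inequality gives
\[
f(n-1) - n < d(x_0, y) < f(n) + n.
\]
I would then show that this range is contained in $[f(n-2), f(n+1))$ (equivalently, that $y \notin \mathscr{B}_{f(n-2)}$ and $y \in \mathscr{B}_{f(n+1)}$), which forces $y$ to lie in $C_{n-1} \cup C_n \cup C_{n+1}$.

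For the upper bound, I would check $f(n)+n \leq f(n+1)$: by adequacy, $f(n+1) - f(n) > (n+1)+1 = n+2 > n$, so $f(n)+n < f(n+1)$, giving $d(x_0,y) < f(n+1)$, i.e. $y \in \mathscr{B}_{f(n+1)}$. For the lower bound, I would check $f(n-2) \leq f(n-1) - n$; by adequacy $f(n-1) - f(n-2) > (n-1)+1 = n$, so $f(n-1) - n > f(n-2)$, giving $d(x_0,y) > f(n-2)$, i.e. $y \notin \mathscr{B}_{f(n-2)}$. (I would note the mild edge case where $n-2 < 1$, i.e. $n = 2$; then $\mathscr{B}_{f(0)}$ or the relevant lower annulus degenerates, but the conclusion $y \notin \mathscr{B}_{f(n-2)}$ is either vacuous or still follows since $f$ is positive, so the claim that $y$ lies in $C_{n-1}, C_n,$ or $C_{n+1}$ holds with $C_{n-1} = C_1 = \mathscr{B}_{f(1)} \setminus \mathscr{B}_{f(0)}$.)

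Combining the two bounds, $y \in \mathscr{B}_{f(n+1)} \setminus \mathscr{B}_{f(n-2)}$, and since the annuli $C_{n-1} = \mathscr{B}_{f(n-1)}\setminus\mathscr{B}_{f(n-2)}$, $C_n = \mathscr{B}_{f(n)}\setminus\mathscr{B}_{f(n-1)}$, and $C_{n+1} = \mathscr{B}_{f(n+1)}\setminus\mathscr{B}_{f(n)}$ partition this region, $y$ must lie in one of $C_{n-1}, C_n, C_{n+1}$, which is exactly the claim.

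I do not expect any serious obstacle here: the entire argument is a triangle-inequality bookkeeping exercise, and the adequacy inequalities $f(k)-f(k-1) > k+1$ are engineered precisely so that $n$ never spans more than one annulus on either side. The only point requiring a little care is the boundary behavior for small $n$ (the $C_{n-2}$ index), which I would dispatch with the observation that $f$ is positive so any point of $X$ automatically lies outside $\mathscr{B}_{f(k)}$ vacuously once the index drops below the domain of $f$.
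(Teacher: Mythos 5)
Your proof is correct and follows essentially the same route as the paper's: both arguments combine the triangle inequality with the adequacy gaps $f(n+1)-f(n)>n+2$ and $f(n-1)-f(n-2)>n$ to show that a displacement of less than $n$ cannot cross more than one annulus boundary in either direction. If anything, you are slightly more careful than the paper, which glosses over the degenerate $n=2$ case (where $\mathscr{B}_{f(n-2)}$ involves the undefined value $f(0)$) that you explicitly dispatch.
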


\begin{proof}
The fact that $y \notin C_k$ for $k \leq n-2$ follows from the fact that $\mathscr{B}_{f(k)} \subseteq \mathscr{B}_{f(n-2)}$ for all $k \leq n-2$ and the fact that for $n>1$ the difference in radii between $\mathscr{B}_{f(n-2)}$ and $\mathscr{B}_{f(n-1)}$ is bigger than $n.$ The fact that $y \notin C_k$ for $k \geq n+2$ follows from the fact that $\mathscr{B}_{f(n+2)} \subseteq \mathscr{B}_{f(k)}$ for all $k \geq n+2$ and the fact that the difference in radii between $\mathscr{B}_{f(n+1)}$ and $\mathscr{B}_{f(n)}$ is bigger than $n+2$ for $n>1.$\end{proof}

\begin{lemma}\label{lemmalemma}
Let $(X,d)$ be a metric space, $A$ an unbounded subset of $X$, $x_{0}$ a point in $X,$ $f:\mathbb{N}\rightarrow\mathbb{R}$ an adequate sequence, and $n \in \mathbb{N}$ such that $n>1.$ If $x \in U(A,f)$ and $x \in \mathscr{B}_{f(n)},$ then there exists $a \in A$ such that $d(x,a)<n.$
\end{lemma}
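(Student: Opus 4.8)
The plan is to unpack the hypothesis $x \in U(A,f)$ directly from the definition of this set. Since $U(A,f) = \bigcup_{k \geq 0} A_k^f$, there is some index $k \geq 0$ with $x \in A_k^f$. I would first dispose of the degenerate case $k = 0$: here $A_0^f = A$, so $x \in A$ itself, and taking $a := x$ gives $d(x,a) = 0 < n$ since $n > 1$.

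For the main case $k \geq 1$, recall that $A_k^f = \mathscr{B}(A,k) \setminus \mathscr{B}_{f(k)}$. Membership $x \in \mathscr{B}(A,k)$ immediately produces a point $a \in A$ with $d(x,a) < k$; this $a$ will be the witness, so it suffices to show $k \leq n$, since then $d(x,a) < k \leq n$ forces $d(x,a) < n$.

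The key step is to pin down the index $k$ using the two ball constraints on $x$. From $x \notin \mathscr{B}_{f(k)}$ we get $d(x,x_0) \geq f(k)$, while the standing hypothesis $x \in \mathscr{B}_{f(n)}$ gives $d(x,x_0) < f(n)$. Chaining these yields $f(k) < f(n)$. Since $f$ is adequate, the inequality $f(m) - f(m-1) > m+1 > 0$ for all $m > 1$ forces $f$ to be strictly increasing on the positive integers; as both $k \geq 1$ and $n > 1$, the inequality $f(k) < f(n)$ then gives $k < n$. Consequently $d(x,a) < k < n$, which is exactly the conclusion.

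I do not anticipate any real obstacle: the only subtlety is that adequacy controls the increments $f(m)-f(m-1)$ only for $m > 1$, so the monotonicity argument should be applied on the positive-index portion of $f$, which is precisely the range of indices appearing through the sets $\mathscr{B}_{f(k)}$ in the definition of $U(A,f)$. Everything else is routine bookkeeping.
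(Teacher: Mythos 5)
Your proof is correct and follows essentially the same route as the paper's: pick the index $k$ with $x \in A_k^f$, extract a witness $a \in A$ with $d(x,a) < k$, and deduce $k < n$ from $x \notin \mathscr{B}_{f(k)}$ together with $x \in \mathscr{B}_{f(n)}$. You are in fact slightly more careful than the paper, which glosses over the $k=0$ case and leaves the strict monotonicity of adequate sequences implicit.
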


\begin{proof}
Since $x \in U(A,f),$ we know that $x \in A_{m}^{f}$ for some $m$. Thus, there exists $a \in A$ such that $d(x,a)<m.$ Also, since $x \in A_{m}^{f},$ $x \notin \mathscr{B}_{f(m)}.$ Since $x \in \mathscr{B}_{f(n)},$ it has to be that $m<n.$ Thus $d(x,a)<m<n.$
\end{proof}

\begin{proposition}\label{starrefinement}
	Let $(X,d)$ be a metric space and $x_{0}\in X$. Then given an adequate sequence $f:\mathbb{N}\rightarrow\mathbb{R},$ there is another adequate sequence $g$ such that for all unbounded $A\subseteq X$ we have that
\[U(U(A,g),g)\ll U(A,f).\]
\end{proposition}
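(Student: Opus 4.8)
The plan is to unwind the relation $U(U(A,g),g)\ll U(A,f)$, which by definition means $U(U(A,g),g)\,\bar{\bf b}\,(X\setminus U(A,f))$, and to verify it as asymptotic disjointness. By Proposition \ref{asymptoticdisjointnessproposition} together with Lemma \ref{easierdisjointness}, it suffices to show that for every $n\in\mathbb{N}$ there is a bounded set $C$ with $d\big(U(U(A,g),g)\setminus C,\,X\setminus U(A,f)\big)>n$. I will take $C$ to be a large ball $\mathscr{B}_{g(P)}$, with $P$ depending on $n$. The sequence $g$ itself is chosen once and for all to be adequate and to dominate $f$ strongly, e.g. so that $g(p)\geq f(4p)+4p$ for all $p$; such an adequate $g$ exists since one may build it recursively, forcing the gaps $g(p)-g(p-1)>p+1$ by hand while keeping each $g(p)$ above $f(4p)+4p$.

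The core is a single estimate on points of the double neighborhood. Fix $z\in U(U(A,g),g)$ lying far from $x_{0}$, and let $p\geq 1$ be the index of the annulus $\mathscr{B}_{g(p)}\setminus\mathscr{B}_{g(p-1)}$ containing $z$, so that $d(z,x_{0})\geq g(p-1)$. By definition there is $m\geq 0$ with $z\in\mathscr{B}(U(A,g),m)\setminus\mathscr{B}_{g(m)}$ (if $m=0$ then $z\in U(A,g)$ and Lemma \ref{lemmalemma} applies directly). Since $d(z,x_{0})\geq g(m)$ while $d(z,x_{0})<g(p)$, monotonicity of $g$ forces $m<p$. Choosing $w\in U(A,g)$ with $d(z,w)<m\leq p-1$, Lemma \ref{furtherandfurther} (applied with the adequate sequence $g$) places $w$ in $\mathscr{B}_{g(p+1)}$, so Lemma \ref{lemmalemma} (again with $g$) produces $a\in A$ with $d(w,a)<p+1$. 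Hence $d(z,a)<(p-1)+(p+1)=2p$, i.e. every far-out point of $U(U(A,g),g)$ in the $p$-th annulus is within $2p$ of $A$.

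To finish, fix $n$ and suppose toward a contradiction that some $z$ as above and some $q\in X\setminus U(A,f)$ satisfy $d(z,q)\leq n$. Put $N:=2p+n+1$. From $d(z,a)<2p$ and $d(z,q)\leq n$ we get $d(a,q)<2p+n=N-1$. On the other hand, $d(a,x_{0})>g(p-1)-2p$ and $d(q,x_{0})\geq g(p-1)-n$, and the domination $g(p-1)\geq f(4(p-1))+4(p-1)$ guarantees, for all $p$ past a threshold $P=P(n)$, that both of these exceed $f(N)$ (because $N=2p+n+1\leq 4(p-1)$ for large $p$, whence $f(N)\leq f(4(p-1))\leq g(p-1)-4(p-1)$). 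Thus $a\in A\setminus\mathscr{B}_{f(N)}$ and $q\in(X\setminus U(A,f))\setminus\mathscr{B}_{f(N)}$, and Lemma \ref{lemmaforstarrefinement} gives $d(a,q)>N-1$, contradicting $d(a,q)<N-1$. Taking $C=\mathscr{B}_{g(P)}$ ensures $p>P$ for every $z\notin C$ (since $d(z,x_{0})\geq g(P)$ and $d(z,x_{0})<g(p)$), which is exactly the regime where the estimate applies.

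The main obstacle is the order of quantifiers: $g$ must be selected before $n$, yet the annulus index $N$ and the required separation both grow with $n$. The resolution is precisely the strong domination of $g$ over $f$, which makes the radius budget $g(p-1)$ at level $p$ outpace $f$ evaluated at the inflated index $N\approx 2p$, so that for each fixed $n$ the inequalities hold for all sufficiently deep annuli. The remaining work is routine index bookkeeping (handling small $p$, the case $m=0$, and verifying that an adequate dominating $g$ can indeed be constructed).
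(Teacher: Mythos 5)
Your proposal is correct and takes essentially the same route as the paper's proof: both choose $g$ to dominate $f$ at an inflated index (your recursive $g(p)\geq f(4p)+4p$ versus the paper's $g(n)=f(n^2)$), locate the relevant points in annuli, chain triangle inequalities through $U(A,g)$ via Lemmas \ref{furtherandfurther} and \ref{lemmalemma}, and contradict the separation bound of Lemma \ref{lemmaforstarrefinement}; the only organizational difference is that you verify non-closeness through the asymmetric criterion of Lemma \ref{easierdisjointness} with the bounded set $\mathscr{B}_{g(P(n))}$, while the paper directly negates ${\bf b}$ with a fixed $\epsilon$. One trivial adjustment: since you exclude pairs with $d(z,q)\leq n$, you only get $d\bigl(U(U(A,g),g)\setminus C,\,X\setminus U(A,f)\bigr)\geq n$, so run the argument at scale $n+1$ to obtain the strict inequality that Lemma \ref{easierdisjointness} requires.
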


\begin{proof}
Let $f$ be as in the statement of the proposition. Define 
\[g(n)=f(n^2).\] 
Since $f$ is adequate, so is $g.$ Let $A \subseteq X$ be an arbitrary unbounded subset. To simplify notation, we will define the following:
\begin{equation*}
\begin{split}
D & :=U(U(A,g),g),\\
E & := U(A,g),\\
F & := U(A,f).
\end{split}
\end{equation*}

We wish to show that $D\bar{ {\bf b}}(X\setminus F),$ where ${\bf b}$ is the coarse proximity relation induced by the metric $d.$ For contradiction assume that $D{\bf b}(X\setminus F).$
Then there is an $\epsilon<\infty$ such that for every natural number $n$ there exists $x_{n}\in ((X\setminus F)\setminus B_{f(n)})$ and $y_{n}\in (D\setminus \mathscr{B}_{f(n)})$ such that 
\[d(x_{n},y_{n})<\epsilon.\]
Since $F$ is a coarse neighborhood of $A$ of radius $f,$ by Lemma \ref{lemmaforstarrefinement} we have that for any $n>4$
\[d((X \setminus F) \setminus \mathscr{B}_{f(n-3)}, A \setminus \mathscr{B}_{f(n-3)})>n-4.\]
Find $n$ so large that it satisfies the following inequalities: 
\begin{equation*}
\begin{split}
n & >4,\\
n -4 & > \epsilon + 2 \sqrt{n+2} +3,\\
n & > \lceil \sqrt{n+2} \rceil ,\\
n-1 & > \lceil \sqrt{n+2} \rceil +1.
\end{split}
\end{equation*}
Notice that the above inequalities are satisfied for any $k \geq n.$
Let $k$ be the largest number such that $x_n \notin \mathscr{B}_{f(k)}.$ Then $x_n \in \mathscr{B}_{f(k+1)}.$ Clearly $k \geq n.$ Since $d(x_{n},y_{n})<\epsilon<n \leq k,$ by Lemma \ref{furtherandfurther}, $y_n$ can be in $C_{k+2}, C_{k+1},$ or $C_{k}.$ In particular, $y_n \in \mathscr{B}_{f(k+2)}.$ Therefore $y_{n}$ has to be in $\mathscr{B}_{g( \lceil \sqrt{k+2} \rceil)}.$ Because if it is not, then 
\[y_n \notin \mathscr{B}_{g( \lceil \sqrt{k+2} \rceil)} = \mathscr{B}_{f(\lceil \sqrt{k+2} \rceil^2)} \supseteq \mathscr{B}_{f(k+2)},\]
a contradiction. Thus, since $y_n \in D$ and $y_n \in \mathscr{B}_{g( \lceil \sqrt{k+2} \rceil)},$ by Lemma \ref{lemmalemma} there exists $z \in E,$ such that 
\[d(y_n, z)< \lceil \sqrt{k+2} \rceil.\]
 Since $y_n \in \mathscr{B}_{g( \lceil \sqrt{k+2} \rceil)}$ and $d(y_n, z)< \lceil \sqrt{k+2} \rceil,$ by the proof of the Lemma \ref{furtherandfurther} we have that $z \in \mathscr{B}_{g( \lceil \sqrt{k+2} \rceil +1)}.$ Thus, since $z \in E,$ and $z \in \mathscr{B}_{g( \lceil \sqrt{k+2} \rceil +1)},$ again by Lemma \ref{lemmalemma} there exists $a \in A$ such that 
\[d(z,a)<\lceil \sqrt{k+2} \rceil +1.\]
Let us now examine how close $a$ is to $x_0$. We do it step by step. We know that $y_n$ can be in $C_{k+2}, C_{k+1},$ or $C_{k}.$ In particular, $y_n \notin \mathscr{B}_{f(k-1)}.$ Since $d(y_n, z)< \lceil \sqrt{k+2} \rceil<k,$ we have that $z \notin \mathscr{B}_{f(k-2)}.$ Since $d(z,a)<\lceil \sqrt{k+2} \rceil +1<k-1,$ we have that $a \notin \mathscr{B}_{f(k-3)}.$ So, we have $x_{n} \in ((X\setminus F)\setminus \mathscr{B}_{f(k-3)}),  a \in (A \setminus \mathscr{B}_{f(k-3)}),$ and 
\begin{equation*}
\begin{split}
d(x_n, a) & \leq d(x_n, y_n)+d(y_n,z)+d(z,a)\\
& \leq \epsilon + \lceil \sqrt{k+2} \rceil + \lceil \sqrt{k+2} \rceil +1\\
& \leq \epsilon + \sqrt{k+2}+1 +\sqrt{k+2} +1+1\\
& = \epsilon +2\sqrt{k+2} + 3\\
& < k-4,
\end{split}
\end{equation*}
a contradiction to
\[d((X \setminus F) \setminus \mathscr{B}_{f(k-3)}, A \setminus \mathscr{B}_{f(k-3)})>k-4. \qedhere\]
\end{proof}

\begin{definition}\label{coarsestarrefine}
	Given a metric space $(X,d)$, a point $x_{0}\in X$, and two adequate sequences $f$ and $g$ such that $g$ satisfies the relation in Proposition \ref{starrefinement}, the sequence $g$ is said to be a \textbf{coarse star refinement of $f$ with respect to $x_{0}$}.
\end{definition}

\begin{corollary}\label{corollary13}
	Let $(X,d)$ be a metric space and $x_{0}\in X$. Then given two adequate sequences $f,g:\mathbb{N}\rightarrow\mathbb{R},$ there is another such sequence $h$ such that for all unbounded $A\subseteq X,$ we have that
	\[U(A,h)\ll U(A,f) \quad \text{ and } \quad U(A,h)\ll U(A,g).\]
\end{corollary}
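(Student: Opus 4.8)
The plan is to reduce the corollary to Proposition \ref{starrefinement} by combining the star refinements of $f$ and $g$ via a pointwise maximum. First I would apply Proposition \ref{starrefinement} separately to $f$ and to $g$, obtaining adequate sequences $f'$ and $g'$ that are coarse star refinements of $f$ and $g$ respectively, so that for every unbounded $A \subseteq X$ we have
\[
U(U(A,f'),f') \ll U(A,f) \quad \text{and} \quad U(U(A,g'),g') \ll U(A,g).
\]
I would then set $h := \max\{f',g'\}$, which is again adequate by the remark following the definition of an adequate sequence.

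The key step is the inclusion $U(A,h) \subseteq U(U(A,f'),f')$. Since $h \geq f'$ pointwise, Proposition \ref{basicproperties}(iv) gives $U(A,h) \subseteq U(A,f')$. Since $A \subseteq U(A,f')$ by Proposition \ref{basicproperties}(i), applying Proposition \ref{basicproperties}(iii) with $U(A,f')$ playing the role of the larger set yields $U(A,f') \subseteq U(U(A,f'),f')$. Chaining these two inclusions produces the desired $U(A,h) \subseteq U(U(A,f'),f')$.

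Finally I would combine this inclusion with the star-refinement relation and the monotonicity of coarse neighborhoods. Because $U(A,h) \subseteq U(U(A,f'),f')$ and $U(U(A,f'),f') \ll U(A,f)$, Proposition \ref{propertiesofcoarseneighborhoods}(iii) (a subset of a set having $C$ as a coarse neighborhood itself has $C$ as a coarse neighborhood) gives $U(A,h) \ll U(A,f)$. The argument for $U(A,h) \ll U(A,g)$ is verbatim the same with $g'$ in place of $f'$, using $h \geq g'$. I do not expect a genuine obstacle here: all the substantive work is carried by Proposition \ref{starrefinement}, and the only care needed is the bookkeeping of the two different monotonicities of the operation $A \mapsto U(A,f)$, namely monotonicity in the set argument (Proposition \ref{basicproperties}(iii)) and the order-reversing monotonicity in the sequence argument (Proposition \ref{basicproperties}(iv)), so that the maximum $h$ simultaneously lies below both $f'$ and $g'$ in the relevant sense.
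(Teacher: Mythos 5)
Your proposal is correct and takes essentially the same route as the paper: apply Proposition \ref{starrefinement} separately to $f$ and $g$, take the pointwise maximum of the two star refinements, and conclude via the monotonicity properties in Propositions \ref{basicproperties} and \ref{propertiesofcoarseneighborhoods}. If anything, your writeup is more careful than the paper's, whose displayed definition $h(n)=\max\{f(n),g(n)\}$ is evidently a typo for $h(n)=\max\{f_{1}(n),g_{1}(n)\}$ (the maximum of the star refinements), which is exactly the $h=\max\{f',g'\}$ you use.
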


\begin{proof}
By Proposition \ref{starrefinement}, there exist adequate sequences $f_1$ and $g_1$ such that
\[U(U(A,f_1),f_1)\ll U(A,f) \quad  \text{ and } \quad U(U(A,g_1),g_1)\ll U(A,g).\]
In particular, we have that 
\[U(A,f_1) \ll U(A,f) \quad  \text{ and } \quad U(A,g_1) \ll U(A,g).\]
Define $h(n)=\max\{f(n),g(n)\}$. Notice that $h$ is an adequate sequence and by Proposition \ref{basicproperties},
\[U(A,h) \ll U(A,f) \quad  \text{ and } \quad U(A,h) \ll U(A,g). \qedhere\]
\end{proof}

\begin{definition}
	Given a coarse proximity space $(X,\mathcal{B},{\bf b}),$ the {\bf hyperspace at infinity} of $X$, denoted $\mathcal{H}_{\infty}(X)$, is the set $\{A\subseteq X\mid A\notin\mathcal{B}\}$. 
\end{definition}

The following theorem defines a proximity on the hyperspace at infinity.

\begin{theorem}\label{proximityatinfinityversion1}
	Given a metric space $(X,d)$, a point $x_{0}\in X$, and its corresponding hyperspace at infinity $\mathcal{H}_{\infty}(X),$ define a relation $\delta$ on the powerset of $\mathcal{H}_{\infty}(X)$ in the following way:
	
	\vspace{2mm}
	 $\mathcal{A}\delta\mathcal{C}$ if and only if for every adequate sequence $f$ there is an $A\in\mathcal{A}$ and a $C\in\mathcal{C}$ such that $A\ll U(C,f)$ and $C\ll U(A,f)$. 
	 \vspace{2mm}
	 
	\noindent Then $\delta$ is a proximity on $\mathcal{H}_{\infty}(X).$
\end{theorem}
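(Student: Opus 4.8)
The plan is to verify, one at a time, the five axioms of Definition \ref{proximity} for the relation $\delta$ on the power set of $\mathcal{H}_\infty(X)$. Throughout, for an adequate sequence $f$ and unbounded $A,C\subseteq X$ I will write $A\approx_f C$ as shorthand for the conjunction $A\ll U(C,f)$ and $C\ll U(A,f)$, so that by definition $\mathcal{A}\,\delta\,\mathcal{C}$ holds precisely when for every adequate $f$ there exist $A\in\mathcal{A}$ and $C\in\mathcal{C}$ with $A\approx_f C$; dually $\mathcal{A}\,\bar\delta\,\mathcal{C}$ means that some adequate $f$ witnesses that no such pair exists. Axioms (i), (iii), and (v) are immediate: symmetry (i) holds because $A\approx_f C$ is symmetric in $A$ and $C$; axiom (iii) holds because an empty family admits no witness $A$ (or $C$); and axiom (v) follows because if $A\in\mathcal{A}\cap\mathcal{C}$ then, using the earlier fact that $A\ll U(A,f)$ for every adequate $f$, the pair $(A,A)$ witnesses $A\approx_f A$ for all $f$.

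For the union axiom (ii) the backward direction is trivial, since enlarging $\mathcal{A}$ to $\mathcal{A}\cup\mathcal{B}$ only adds potential witnesses. For the forward direction I would argue by contradiction: assuming $\mathcal{A}\,\bar\delta\,\mathcal{C}$ and $\mathcal{B}\,\bar\delta\,\mathcal{C}$, pick adequate witnesses $f_1$ and $f_2$ respectively and set $f:=\max\{f_1,f_2\}$, which is again adequate. Applying $(\mathcal{A}\cup\mathcal{B})\,\delta\,\mathcal{C}$ to $f$ yields $W\in\mathcal{A}\cup\mathcal{B}$ and $C\in\mathcal{C}$ with $W\approx_f C$; without loss of generality $W\in\mathcal{A}$. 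Since $f_1\leq f$, Proposition \ref{basicproperties}(iv) gives $U(C,f)\subseteq U(C,f_1)$ and $U(W,f)\subseteq U(W,f_1)$, and since coarse neighborhoods are preserved under enlarging the neighborhood (Lemma \ref{lemma1}/Remark \ref{remark11}) we obtain $W\approx_{f_1}C$ — contradicting the choice of $f_1$.

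The strong axiom (iv) is where the real work lies and is the step I expect to be the main obstacle. Given $\mathcal{A}\,\bar\delta\,\mathcal{C}$, fix an adequate witness $f$ and, using Proposition \ref{starrefinement}, choose a coarse star refinement $g$ of $f$ (Definition \ref{coarsestarrefine}), so that $U(U(B,g),g)\ll U(B,f)$ for every unbounded $B$. Define
\[\mathcal{E}:=\{\,D\in\mathcal{H}_\infty(X)\mid D\approx_g C\text{ for some }C\in\mathcal{C}\,\}.\]
The relation $(\mathcal{H}_\infty(X)\setminus\mathcal{E})\,\bar\delta\,\mathcal{C}$ is then immediate with $g$ as witness: by the definition of $\mathcal{E}$, no $D\notin\mathcal{E}$ satisfies $D\approx_g C$ for any $C\in\mathcal{C}$. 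The substantial part is $\mathcal{A}\,\bar\delta\,\mathcal{E}$, which I will also witness by $g$, arguing by contradiction: suppose $A\in\mathcal{A}$ and $D\in\mathcal{E}$ satisfy $A\approx_g D$, and let $C\in\mathcal{C}$ be such that $D\approx_g C$. The crux is a composition step: from $C\ll U(D,g)$ and $D\ll U(A,g)$ I would deduce $C\ll U(U(A,g),g)$, after which the star-refinement property together with transitivity of coarse neighborhoods (Proposition \ref{propertiesofcoarseneighborhoods}(v)) yields $C\ll U(A,f)$; the symmetric chain, from $A\ll U(D,g)$ and $D\ll U(C,g)$, gives $A\ll U(C,f)$. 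Hence $A\approx_f C$ with $A\in\mathcal{A}$, $C\in\mathcal{C}$, contradicting that $f$ witnesses $\mathcal{A}\,\bar\delta\,\mathcal{C}$.

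The composition step deserves its own small lemma, whose verification I expect to be the most delicate bookkeeping. To show that $C\ll U(D,g)$ and $D\ll U(A,g)$ imply $C\ll U(U(A,g),g)$, I would write $D=(D\cap U(A,g))\cup(D\setminus U(A,g))$, where the second piece is bounded by Proposition \ref{propertiesofcoarseneighborhoods}(ii). Proposition \ref{basicproperties}(v) then splits $U(D,g)$ into $U(D\cap U(A,g),g)\subseteq U(U(A,g),g)$ (by \ref{basicproperties}(iii)) and a bounded remainder (by \ref{basicproperties}(ii)); since coarse neighborhoods are insensitive to bounded modifications of the neighborhood (axioms (ii) and (iii) of the coarse proximity), the relation $C\ll U(D,g)$ upgrades to $C\ll U(U(A,g),g)$, as needed. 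Finally I would note that the empty cases ($\mathcal{A}=\emptyset$ or $\mathcal{C}=\emptyset$) require no special treatment, since the construction of $\mathcal{E}$ and both verifications remain valid vacuously.
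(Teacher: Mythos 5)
Your proof is correct, and its overall architecture is the paper's: the only substantive axioms are the union and strong axioms, the union axiom is handled by merging two witnessing sequences into one, and the strong axiom by fixing a witnessing sequence $f$, taking a coarse star refinement $g$ via Proposition \ref{starrefinement}, and interposing a family $\mathcal{E}$ defined by $g$-scale closeness to $\mathcal{C}$. The differences are local but worth recording. In the union axiom you use $\max\{f_1,f_2\}$ together with the monotonicity $U(A,\max\{f_1,f_2\})\subseteq U(A,f_i)$ of Proposition \ref{basicproperties}(iv), plus the fact that enlarging a coarse neighborhood preserves $\ll$; this is more elementary than the paper's appeal to Corollary \ref{corollary13}, which routes through star refinements to get the stronger conclusion $U(A,h)\ll U(A,f_i)$ when mere containment suffices. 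In the strong axiom your symmetric $\mathcal{E}=\{D\mid\exists\, C\in\mathcal{C},\ D\ll U(C,g)\text{ and }C\ll U(D,g)\}$ differs from the paper's asymmetric $\mathcal{E}=\{K\mid\exists\, C\in\mathcal{C},\ C\ll U(K,g)\ll U(C,f)\}$: your choice makes $(\mathcal{H}_{\infty}(X)\setminus\mathcal{E})\,\bar{\delta}\,\mathcal{C}$ true by fiat with witness $g$, and concentrates all the work in $\mathcal{A}\,\bar{\delta}\,\mathcal{E}$, which your composition lemma ($C\ll U(D,g)$ and $D\ll U(A,g)$ imply $C\ll U(U(A,g),g)$, hence $C\ll U(A,f)$ by star refinement and transitivity) settles, applied once in each direction. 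The paper instead pays a nontrivial argument on both halves, each time invoking Proposition \ref{basicproperties}(vi). Your composition lemma is in substance a re-derivation of \ref{basicproperties}(vi) — split $D$ along $U(A,g)$, observe the leftover is bounded by Proposition \ref{propertiesofcoarseneighborhoods}(ii), and use that $\ll$ is insensitive to bounded modifications of the neighborhood — so you could shorten your write-up by citing (vi) directly, exactly as the paper does. What your variant buys is a cleaner ledger (one tautological half, one isolated lemma) at the cost of hard-coding nothing into $\mathcal{E}$, whereas the paper's membership condition builds the conclusion $U(K,g)\ll U(C,f)$ into $\mathcal{E}$ itself; everything you need (all members of $\mathcal{H}_{\infty}(X)$ being unbounded, so Proposition \ref{starrefinement} applies) is available, and your handling of the trivial axioms and empty cases is sound.
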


\begin{remark}\label{contrapositiveofcloseness}
Notice that the statement $\mathcal{A} \bar{\delta} \mathcal{C}$ is equivalent to the existence of an adequate sequence $f$ such that for all $A \in \mathcal{A}$ and $C \in \mathcal{C}$, either $A\not\ll U(C,f)$ or $C\not\ll U(A,f)$. Such a sequence $f$ will be called a \textbf{witnessing (adequate) sequence} for $\mathcal{A} \bar{\delta} \mathcal{C}$.
\end{remark}

\begin{proof}[Proof of theorem \ref{proximityatinfinityversion1}]
	The only axioms of a proximity that are not immediate from the definition of $\delta$ are the union and strong axioms. We will show these here.

	{\bf Union axiom:} Assume that $\mathcal{A},\mathcal{C},\mathcal{D}\subseteq\mathcal{H}_{\infty}(X)$ and $(\mathcal{C}\cup\mathcal{D})\delta \mathcal{A}$. Assume towards a contradiction that $\mathcal{C}\bar{\delta}\mathcal{A}$ and $\mathcal{D}\bar{\delta}\mathcal{A}$. Then there are witnessing adequate sequences $f_{1}$ and $f_{2},$ respectively. By Corollary \ref{corollary13} there exists an adequate sequence $g$ such that for all unbounded sets $A\subseteq X$ we have	
	\[U(A,g)\ll U(A,f_{1})\text{ and }U(A,g)\ll U(A,f_{2}).\]
Because $(\mathcal{C}\cup\mathcal{D})\delta \mathcal{A},$ there is some $C\in(\mathcal{C}\cup\mathcal{D})$ and some $A\in\mathcal{A}$ such that $C\ll U(A,g)$ and $A\ll U(C,g).$ If $C\in\mathcal{C},$ then we have 
	\[C\ll U(A,g)\ll U(A,f_{1}) \quad \text{ and } \quad A\ll U(C,g)\ll U(C,f_{1}),\]
a contradiction to $f_1$ being a witnessing cover for $\mathcal{A} \bar{\delta} \mathcal{C}.$ If $C\in\mathcal{D},$ then we have 
	\[C\ll U(A,g)\ll U(A,f_{2}) \quad \text{ and } \quad A\ll U(C,g)\ll U(C,f_{2}),\]
a contradiction to $f_2$ being a witnessing cover for $\mathcal{A} \bar{\delta} \mathcal{D}.$
The converse direction of the union axiom is trivial.
	
	{\bf Strong Axiom:} Let $\mathcal{A},\mathcal{C}\subseteq\mathcal{H}_{\infty}(X)$ be such that $\mathcal{A}\bar{\delta}\mathcal{C}$. Then there exists a witnessing adequate sequence $f$ such that for all $A\in\mathcal{A}$ and all $C\in\mathcal{C}$ one either has $A\not\ll U(C,f)$ or $C\not\ll U(A,f)$. Let $g$ be an adequate sequence such that for all $A\in\mathcal{H}_{\infty}(X)$ we have
\[U(U(A,g),g)\ll U(A,f).\]
Define
\[\mathcal{E}=\{K\in\mathcal{H}_{\infty}(X)\mid \exists C\in\mathcal{C},\,C\ll U(K,g)\ll U(C,f)\}.\]
We claim that $\mathcal{A}\bar{\delta}\mathcal{E}$ and $(\mathcal{H}_{\infty}(X)\setminus\mathcal{E})\bar{\delta}\mathcal{C}$. If $\mathcal{A}\delta\mathcal{E},$ then there is some $A\in\mathcal{A}$ and some $K\in\mathcal{E}$ such that $A\ll U(K,g)$ and $K\ll U(A,g)$. Let $C$ be a member of $\mathcal{C}$ that witnesses $K$ being a member of $\mathcal{E}$. Then $ U(K,g)\ll U(C,f),$ which implies $A\ll U(C,f)$. Also, by (vi) of Proposition \ref{basicproperties}, we have that $K\ll U(A,g)$ implies that there is a bounded set $D$ such that $U(K,g)\setminus D\subseteq U(U(A,g),g)\ll U(A,f).$ Thus, $U(K,g)\ll U(A,f),$ and hence $C\ll U(A,f).$ Therefore, we have  $A\ll U(C,f)$ and $C\ll U(A,f),$ which is a contradiction to $f$ being a witnessing sequence. Therefore $\mathcal{A}\bar{\delta}\mathcal{E}$.
	
	Now assume towards a contradiction that $(\mathcal{H}_{\infty}(X)\setminus\mathcal{E})\delta\mathcal{C}$. Then let $K\in(\mathcal{H}_{\infty}(X))\setminus\mathcal{E}$ and $C\in\mathcal{C}$ be such that $K\ll U(C,g)$ and $C\ll U(K,g)$. The first of these implies that there is a bounded set $D$ such that $U(K,g)\setminus D\subseteq U(U(C,g),g)\ll U(C,f)$, which in turn implies that $U(K,g)\ll U(C,f)$. However this implies that $K\in\mathcal{E},$ which is a contradiction. Therefore $(\mathcal{H}_{\infty}(X)\setminus\mathcal{E})\bar{\delta}\mathcal{C}$ which established the strong axiom for $\delta$.
\end{proof}

Note that the coarse neighborhoods $U(A,f)$ and hence the proximity $\delta$ are defined with respect to a particular point $x_{0}$ within our metric space $X$. Proposition \ref{metricbase} showed that regardless of the choice of point $x_{0},$ the resulting coarse neighborhoods of the form $U(A,f)$ will make up a coarse neighborhood base at any subset $A$ of $X$. Now we will show that the proximity on the hyperspace also does not depend on the choice of the base point. For the sake of clarity, we will return to our previous abbreviated notation involving the basepoint, i.e., $U_{x_0}(A,f).$

\begin{lemma}\label{technicallemma}
Let $(X,d)$ be a metric space and $x_{0},x_{1}$ distinct points of $X.$ Then for any adequate sequence $f,$ there exists an adequate sequence $g$ such that for all subsets $C \subseteq X,$
\[U_{x_{0}}(C,g)\ll U_{x_{1}}(C,f).\]
\end{lemma}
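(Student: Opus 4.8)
The plan is to build $g$ by a two-stage construction: first shrink $f$ to a coarse star refinement so that the target neighborhood has some room to spare, and then absorb the displacement of the base point by a constant translation of the radii. The key point to keep in mind is that a mere set inclusion $U_{x_{0}}(C,g)\subseteq U_{x_{1}}(C,f)$ does \emph{not} suffice to conclude $U_{x_{0}}(C,g)\ll U_{x_{1}}(C,f)$; one genuinely needs the target to be a coarse neighborhood of an intermediate set that already contains the source. Accordingly, set $R:=d(x_{0},x_{1})$ and, applying Corollary \ref{corollary13} with base point $x_{1}$ to the pair $(f,f)$, fix an adequate sequence $h$ such that
\[U_{x_{1}}(C,h)\ll U_{x_{1}}(C,f)\]
for every unbounded $C\subseteq X$.

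Next I would define $g(n):=h(n)+R$. Since $h$ is positive and $R>0$, the sequence $g$ is positive, and $g(n)-g(n-1)=h(n)-h(n-1)>n+1$ for all $n>1$, so $g$ is adequate. The heart of the argument is then the inclusion
\[U_{x_{0}}(C,g)\subseteq U_{x_{1}}(C,h)\qquad\text{for all }C\subseteq X,\]
which follows from a single application of the triangle inequality. Indeed, a point $x\in U_{x_{0}}(C,g)$ lying in the $n$-th piece ($n\geq 1$) satisfies $x\in\mathscr{B}(C,n)$ and $d(x,x_{0})\geq g(n)$, whence $d(x,x_{1})\geq d(x,x_{0})-R\geq g(n)-R=h(n)$; thus $x\notin\mathscr{B}(x_{1},h(n))$, so $x\in\mathscr{B}(C,n)\setminus\mathscr{B}(x_{1},h(n))$, which is exactly the $n$-th piece of $U_{x_{1}}(C,h)$ (the case $n=0$ being trivial, as both zeroth pieces equal $C$).

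Finally I would assemble the pieces. If $C$ is unbounded, combining the inclusion with the choice of $h$ gives
\[U_{x_{0}}(C,g)\subseteq U_{x_{1}}(C,h)\ll U_{x_{1}}(C,f),\]
so Proposition \ref{propertiesofcoarseneighborhoods}(iii) yields $U_{x_{0}}(C,g)\ll U_{x_{1}}(C,f)$, as required. If $C$ is bounded, then $U_{x_{0}}(C,g)$ is bounded by Proposition \ref{basicproperties}(ii), and hence $U_{x_{0}}(C,g)\ll U_{x_{1}}(C,f)$ by Proposition \ref{propertiesofcoarseneighborhoods}(i). The only subtle step is the very first one: recognizing that the star refinement supplied by Corollary \ref{corollary13} is precisely what converts the elementary translation estimate into the required $\ll$ relation. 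Everything else is routine triangle-inequality bookkeeping.
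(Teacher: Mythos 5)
Your proof is correct, but it takes a genuinely different and more modular route than the paper's. The paper fuses the two ingredients into a single construction: it defines $T(n)$ as the least natural number with $\mathscr{B}(x_{1},f(n))\subseteq \mathscr{B}(x_{0},T(n))$ (essentially your $f(n)+R$), builds $g(n)=\max\{T(n^{2}),g(n-1)+n+2\}$ so that a quadratic star-refinement jump is baked into the sequence itself, and then proves $U_{x_{0}}(C,g)\ll U_{x_{1}}(C,f)$ from scratch by contradiction, chasing an $\epsilon$-close pair of points through the annuli $C_{n}$ via Lemmas \ref{lemmaforstarrefinement}, \ref{furtherandfurther}, and \ref{lemmalemma} -- in effect re-running the distance estimates from the proof of Proposition \ref{starrefinement} with the base-point translation interleaved. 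You instead factor the problem cleanly: the $\ll$ relation is supplied entirely by the already-proven star refinement at the fixed base point $x_{1}$ (Proposition \ref{starrefinement}, via Corollary \ref{corollary13} applied to the pair $(f,f)$), while the base-point change is absorbed by the constant shift $g=h+R$, which yields the honest set inclusion $U_{x_{0}}(C,g)\subseteq U_{x_{1}}(C,h)$ by one triangle inequality, after which Proposition \ref{propertiesofcoarseneighborhoods}(iii) finishes the unbounded case and Propositions \ref{basicproperties}(ii) and \ref{propertiesofcoarseneighborhoods}(i) the bounded one. Your observation that bare inclusion does not imply $\ll$, and that one therefore needs the intermediate sequence $h$, is exactly the right pivot; it is the same structural point the paper's $T(n^{2})$ is secretly encoding. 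What your approach buys is brevity and reuse -- no new metric estimates, and the role of $d(x_{0},x_{1})$ is isolated in a single line; what the paper's buys is a self-contained argument whose explicit sequence $g$ is later convenient in the same notational framework, at the cost of repeating the annulus-chasing machinery.
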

\begin{proof}
Let $f$ be an adequate sequence. Without loss of generality we can assume that $f$ takes integer values. We define an adequate sequence $g$ in the following way: for each $n\in\mathbb{N},$ there is a least natural number $T(n)$ such that $B(x_{1},f(n))\subseteq B(x_{0},T(n)).$ Define $g: \mathbb{N} \to \mathbb{R}$ by setting $g(1)=T(1)$ and then inductively by
\[g(n):=\max\{T(n^2), g(n-1)+n+2\}.\]
Notice that the second condition implies that $g$ is an adequate sequence. We then claim that for all subsets $C\subseteq X$ we have
\[U_{x_{0}}(C,g)\ll U_{x_{1}}(C,f).\]
	Denote the set on the left hand side by $D$ and the set on the right hand side by $E$. Assume towards a contradiction that $D{\bf b}(X\setminus E),$ where ${\bf b}$ is the coarse proximity induced by the metric. Then there is an $\epsilon<\infty$ such that for every $n\in\mathbb{N}$ there is
\[x_{n}\in (X\setminus E)\setminus \mathscr{B}(x_{0},g(n))\quad \text{ and }y_{n} \quad \in D\setminus \mathscr{B}(x_{0},g(n))\]
such that $d(x_{n},y_{n})<\epsilon$. Let $k_n$ be the greatest natural number such that $x_{n}\notin \mathscr{B}(x_{0},g(k_n))$. Then $x_{n}\in \mathscr{B}(x_{0},g(k_n+1))$. By Lemma \ref{furtherandfurther} we have that for any $n>\epsilon,$ $y_{n}\notin \mathscr{B}(x_{0},g(k_n-1))$ and $y_{n}\in \mathscr{B}(x_{0},g(k_n+2))$. Then, by Lemma \ref{lemmalemma} we have that there must be a $c_{n}\in C$ such that $d(y_{n},c_n)<k_n+2$. Notice that since $y_{n}\notin \mathscr{B}(x_{0},g(k_n-1))$ and $d(y_{n},c_n)<k_n+2,$ $g$ being is an adequate sequence implies $c_n \notin \mathscr{B}(x_0, g(k_n-3))$. Also, by the triangle inequality we have that for all $n> \epsilon,$
	\[d(x_{n},c_n)<\epsilon+k_n+2.\]
	However, for all $n> \epsilon$ we also have that
	\[x_{n},c_{n}\notin \mathscr{B}(x_{0},g(k_{n}-3)) \supseteq \mathscr{B}(x_{0},T((k_{n}-3)^{2}))\supseteq \mathscr{B}(x_{1},f((k_{n}-3)^{2})).\]
	Thus, by Lemma \ref{lemmaforstarrefinement} we have that $d(x_{n},c_{n})>(k_{n}-3)^{2}$ for all $n> \epsilon.$ But for large enough $n,$ this contradicts $d(x_{n},c_{n})<\epsilon+k_{n}+2$. Therefore, it has to be that $U_{x_{0}}(C,g)\ll U_{x_{1}}(C,f).$
\end{proof}

\begin{theorem}\label{basepointindependence}
	Let $(X,d)$ be a metric space and $x_{0},x_{1}$ distinct points of $X$. If $\delta_{0}$ and $\delta_{1}$ are the respective proximities on $\mathcal{H}_{\infty}(X)$ constructed using $x_{0}$ and $x_{1}$ as in Theorem \ref{proximityatinfinityversion1}, then the proximity relations $\delta_{0}$ and $\delta_{1}$ are equal.
\end{theorem}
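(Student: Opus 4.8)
The plan is to prove that the two relations coincide by establishing the single inclusion $\mathcal{A}\,\delta_{0}\,\mathcal{C} \implies \mathcal{A}\,\delta_{1}\,\mathcal{C}$ and then invoking the symmetry of the whole construction in the base point to obtain the reverse inclusion. The entire argument rests on Lemma \ref{technicallemma}, which lets us dominate coarse neighborhoods computed at $x_{1}$ by coarse neighborhoods computed at $x_{0}$, together with the transitivity of the coarse neighborhood relation recorded in Proposition \ref{propertiesofcoarseneighborhoods}(v).

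Concretely, I would begin by assuming $\mathcal{A}\,\delta_{0}\,\mathcal{C}$ and fixing an arbitrary adequate sequence $f$; the goal is to produce some $A \in \mathcal{A}$ and $C \in \mathcal{C}$ with $A \ll U_{x_{1}}(C,f)$ and $C \ll U_{x_{1}}(A,f)$. Applying Lemma \ref{technicallemma} to $f$ yields an adequate sequence $g$ such that $U_{x_{0}}(K,g) \ll U_{x_{1}}(K,f)$ for \emph{every} subset $K \subseteq X$; it is important that this holds simultaneously for all sets, so that the same $g$ can be used below for both $A$ and $C$. Feeding the sequence $g$ into the hypothesis $\mathcal{A}\,\delta_{0}\,\mathcal{C}$ then produces $A \in \mathcal{A}$ and $C \in \mathcal{C}$ with $A \ll U_{x_{0}}(C,g)$ and $C \ll U_{x_{0}}(A,g)$.

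Next I would chain these facts with the lemma using transitivity of $\ll$: from $A \ll U_{x_{0}}(C,g)$ and $U_{x_{0}}(C,g) \ll U_{x_{1}}(C,f)$ we obtain $A \ll U_{x_{1}}(C,f)$, and similarly $C \ll U_{x_{0}}(A,g) \ll U_{x_{1}}(A,f)$ gives $C \ll U_{x_{1}}(A,f)$. Since $f$ was an arbitrary adequate sequence, this is precisely the condition $\mathcal{A}\,\delta_{1}\,\mathcal{C}$, establishing $\delta_{0} \subseteq \delta_{1}$. Interchanging the roles of $x_{0}$ and $x_{1}$ throughout — legitimate because Lemma \ref{technicallemma} is symmetric in the two points — yields $\delta_{1} \subseteq \delta_{0}$, and hence $\delta_{0} = \delta_{1}$.

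I do not expect a genuine obstacle here: the substantive difficulty has already been absorbed into Lemma \ref{technicallemma}, and given that lemma the theorem is a short deduction. The one point requiring care is the order of quantifiers. Because the definition of $\delta$ demands witnesses for every adequate sequence, the argument must start from an arbitrary $f$ governing the $x_{1}$-neighborhoods and then manufacture the auxiliary sequence $g$ governing the $x_{0}$-neighborhoods via the lemma, and not the reverse; reversing this order would leave the wrong sequence quantified. The second thing to keep in mind is that the single sequence $g$ returned by the lemma is valid for all subsets at once, which is exactly what permits the same $g$ to handle both the comparison of $A$ with $U_{x_{1}}(C,f)$ and the comparison of $C$ with $U_{x_{1}}(A,f)$.
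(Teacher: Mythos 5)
Your proposal is correct and follows essentially the same route as the paper's own proof: fix an arbitrary adequate sequence $f$, obtain $g$ from Lemma \ref{technicallemma}, extract witnesses $A$ and $C$ from $\mathcal{A}\,\delta_{0}\,\mathcal{C}$ applied to $g$, chain via transitivity of $\ll$, and conclude by symmetry in the base points. Your remarks on the quantifier order and on the uniformity of $g$ over all subsets correctly identify the only delicate points.
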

\begin{proof}
Assume $\mathcal{A},\mathcal{C}\subseteq\mathcal{H}_{\infty}(X)$ be such that $\mathcal{A}\delta_{0}\mathcal{C}$. Let $f$ be an arbitrary adequate sequence. Then by Lemma \ref{technicallemma}, there exists an adequate sequence $g$ such that for all subsets $C \subseteq X,$
\[U_{x_{0}}(C,g)\ll U_{x_{1}}(C,f).\]
Since $\mathcal{A}\delta_{0}\mathcal{C},$ there exists $A\in\mathcal{A}$ and a $C\in\mathcal{C}$ such that 
\[A\ll U_{x_{0}}(C,g) \quad \text{ and } \quad C\ll U_{x_{0}}(A,g),\]
which by the property of $g$ gives us 
\[A\ll U_{x_{0}}(C,g)\ll U_{x_{1}}(C,f)\quad \text{ and }\quad C\ll U_{x_{0}}(A,g)\ll U_{x_{1}}(A,f).\]
Thus, we have $\mathcal{A}\delta_{1}\mathcal{C}$. Similarly one can show that $\mathcal{A}\delta_{1}\mathcal{C}$ implies $\mathcal{A}\delta_{0}\mathcal{C}$. Therefore $\delta_{0}=\delta_{1}$.	
\end{proof}

\begin{definition}
	Let $(X,d)$ be a metric space and $(\mathcal{H}_{\infty}(X),\delta_{0})$ the corresponding proximity space (constructed with respect to some point $x_{0}\in X$). Define the set ${\bf B}X$ to be the set of all $\phi$ equivalence classes of unbounded sets in $X,$ where $\phi$ is the weak asymptotic resemblance induced by the coarse proximity induced by $d.$ By Proposition \ref{metricconsistency} this relation is equivalent to the relation of having finite Hausdorff distance. Endow ${\bf B}X$ with quotient proximity $\bm{\delta}$ induced by the projection $\pi:(\mathcal{H}_{\infty}(X),\delta_{0})\rightarrow{\bf B}X,$ as in Definition \ref{surjectiveproximity}. The quotient proximity space $({\bf B}X,\bm{\delta})$ is called the {\bf proximity space at infinity} of $X$.
\end{definition}

\begin{remark}
If $A$ is a subset of $X,$ then $\phi$ equivalence class of $A$ (i.e. a point in ${\bf B} X$) will be denoted by $[A].$
\end{remark}

Our goal is to show that the proximity space at infinity induces a functor from the category of unbounded metric spaces whose morphisms are close equivalence classes of coarse proximity maps to the category of proximity spaces whose morphisms are proximity maps.

\begin{theorem}\label{inducedmap}
	Let $(X,d_{0}),(Y,d_{1})$ be unbounded metric spaces, $h:X\rightarrow Y$ a coarse proximity map, and $(\bm{B}X,\bm{\delta}_{0}),(\bm{B}Y,\bm{\delta}_{1})$ the corresponding proximity spaces at infinity. Then the map $\bm{B}h:\bm{B}X\rightarrow\bm{B}Y$ defined by
	\[\bm{B}h([A])=[h(A)]\]
	is a well-defined proximity map. Moreover, if $l:X\rightarrow Y$ is a coarse proximity map that is close to $h,$ then $\bm{B}h=\bm{B}l$.
\end{theorem}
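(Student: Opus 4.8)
The plan is to prove the three assertions in turn: that $\bm{B}h$ is well-defined, that it is a proximity map, and that it depends only on the closeness class of $h$. For well-definedness, note first that since $h$ is a coarse proximity map, Remark \ref{remark5} shows $h$ carries unbounded sets to unbounded sets, so $[h(A)]$ is a genuine element of $\bm{B}Y$ whenever $[A]\in\bm{B}X$. To see that $[h(A)]$ depends only on the class $[A]$, suppose $A\phi A'$ for unbounded $A,A'\subseteq X$, where $\phi$ is the weak asymptotic resemblance induced by $d_{0}$; Proposition \ref{proximitymapspreserveweakasymptoticresemblance} gives $h(A)\phi h(A')$, so $[h(A)]=[h(A')]$. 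Hence $\bm{B}h$ is a well-defined function.

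To show $\bm{B}h$ is a proximity map I would factor it through the hyperspaces and invoke the universal property of the quotient proximity. Let $\delta_{X}$ and $\delta_{Y}$ be the hyperspace proximities of Theorem \ref{proximityatinfinityversion1} on $\mathcal{H}_{\infty}(X)$ and $\mathcal{H}_{\infty}(Y)$, constructed with basepoints $x_{0}$ and $y_{0}:=h(x_{0})$ (the choice is harmless by Theorem \ref{basepointindependence}), and let $\pi_{X},\pi_{Y}$ be the quotient projections. Define $H:\mathcal{H}_{\infty}(X)\to\mathcal{H}_{\infty}(Y)$ by $H(A)=h(A)$; this lands in $\mathcal{H}_{\infty}(Y)$ by Remark \ref{remark5}, and by construction the square commutes:
\[\pi_{Y}\circ H=\bm{B}h\circ\pi_{X}.\]
Since $\bm{\delta}_{1}$ is the quotient proximity for $\pi_{Y}$, the map $\pi_{Y}$ is a proximity map, so once $H$ is shown to be a proximity map it will follow that $\bm{B}h\circ\pi_{X}=\pi_{Y}\circ H$ is a proximity map; applying Proposition \ref{quotientproximityproperty} with $\pi_{X}$ as the surjection and $\bm{B}h$ as $g$ then yields that $\bm{B}h$ is a proximity map from $(\bm{B}X,\bm{\delta}_{0})$ to $(\bm{B}Y,\bm{\delta}_{1})$. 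Everything thus reduces to the claim that $\mathcal{A}\,\delta_{X}\,\mathcal{C}$ implies $H(\mathcal{A})\,\delta_{Y}\,H(\mathcal{C})$, and the engine for this is a push-forward lemma: for every adequate sequence $g$ on $Y$ there is an adequate sequence $f$ on $X$ such that for all unbounded $A,C\subseteq X$,
\[A\ll U_{x_{0}}(C,f)\implies h(A)\ll U_{y_{0}}(h(C),g).\]
Granting the lemma the argument is quick: given $\mathcal{A}\,\delta_{X}\,\mathcal{C}$ and an arbitrary adequate $g$ on $Y$, choose $f$ as above; by definition of $\delta_{X}$ there are $A\in\mathcal{A}$ and $C\in\mathcal{C}$ with $A\ll U_{x_{0}}(C,f)$ and $C\ll U_{x_{0}}(A,f)$, and the lemma upgrades these to $h(A)\ll U_{y_{0}}(h(C),g)$ and $h(C)\ll U_{y_{0}}(h(A),g)$, so $h(A)\in H(\mathcal{A})$ and $h(C)\in H(\mathcal{C})$ witness the defining condition for $g$. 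As $g$ was arbitrary, $H(\mathcal{A})\,\delta_{Y}\,H(\mathcal{C})$.

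I expect the push-forward lemma to be the main obstacle, and its proof to resemble the basepoint-change estimate of Lemma \ref{technicallemma} and the star-refinement estimate of Proposition \ref{starrefinement}. I would argue contrapositively: if $h(A)\,{\bf b}\,(Y\setminus U_{y_{0}}(h(C),g))$ then, by the equivalent formulations of coarse closeness, there are $a_{m}\in A$ with $h(a_{m})$ witnessing the closeness, and properness of $h$ (Proposition \ref{coarseequalsproximity}) forces the $a_{m}$ to escape every bounded set. If infinitely many $a_{m}$ lie outside $U_{x_{0}}(C,f)$, then $A\cap(X\setminus U_{x_{0}}(C,f))$ is unbounded and axiom (v) gives $A\,{\bf b}\,(X\setminus U_{x_{0}}(C,f))$, i.e.\ $A\not\ll U_{x_{0}}(C,f)$, as wanted. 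Otherwise $a_{m}\in U_{x_{0}}(C,f)$ eventually; using that $a_{m}$ lies in some annular level $A_{k}^{f}$ one bounds $d_{0}(a_{m},C)$, pushes this through the bornologous control modulus of $h$ to bound $d_{1}(h(a_{m}),h(C))$, and must then have chosen $f$ growing fast enough — relative to $g$ and the control and properness moduli of $h$ — that the witness near $h(a_{m})$ is forced into $U_{y_{0}}(h(C),g)$, contradicting its choice. The delicate point is exactly this calibration of $f$ against $g$ and the moduli of $h$, since, as the remark following Proposition \ref{copreservingneighborhoods} shows, $h$ need not push coarse neighborhoods forward on the nose, so all the slack must be absorbed into the growth of $f$.

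Finally, for the closeness statement, if $l\sim h$ then by Definition \ref{coarsecloseness} we have $h(A)\phi l(A)$ for every $A\subseteq X$, so $[h(A)]=[l(A)]$ in $\bm{B}Y$. Therefore $\bm{B}h([A])=[h(A)]=[l(A)]=\bm{B}l([A])$ for every $[A]\in\bm{B}X$, whence $\bm{B}h=\bm{B}l$.
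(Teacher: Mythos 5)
Your proposal follows essentially the same route as the paper: well-definedness via Proposition \ref{proximitymapspreserveweakasymptoticresemblance}, reduction through the hyperspace map and the quotient-proximity universal property (Proposition \ref{quotientproximityproperty}) using the same commutative square, and the same key push-forward lemma calibrating an adequate sequence on $X$ against one on $Y$ via the properness and bornologous moduli of $h$. The paper executes exactly the calibration you flag as delicate, in the direct (rather than contrapositive) form: it builds $g$ from the moduli $T$ and $\rho$, shows $h(A)\subseteq U_{y_{0}}(h(C),f_{2})$ up to a bounded set, and then uses a coarse star refinement $f_{2}$ of $f_{1}$ together with Proposition \ref{propertiesofcoarseneighborhoods} to upgrade that containment to $h(A)\ll U_{y_{0}}(h(C),f_{1})$.
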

\begin{proof}
	The well-definedness of $\bm{B}h$ follows from Proposition \ref{proximitymapspreserveweakasymptoticresemblance}. The equality of $\bm{B}h$ and $\bm{B}l$ for close coarse proximity maps $h$ and $l$ follows from the definition of closeness of coarse proximity maps. Let us show that ${\bf B}h$ is a proximity map. Let $x_{0}\in X$ and $y_{0}=h(x_{0})$. Let $\delta_{0}$ be the proximity on $\mathcal{H}_{\infty}(X)$ constructed using the basepoint $x_{0}$ and let $\delta_{1}$ be proximity on $\mathcal{H}_{\infty}(Y)$ constructed using the point $y_{0}=h(x_{0})$. We then consider the following commutative diagram:
	\begin{center}
		\begin{tikzcd}[column sep=large]
			(\mathcal{H}_{\infty}(X),\delta_{0}) \arrow{r}{h} \arrow{dd}{\pi_{X}} \arrow{rdd}{{\bf B}h\circ\pi} 
			&  (\mathcal{H}_{\infty}(Y),\delta_{1}) \arrow{dd}{\pi_{Y}}\\
			& \\
			({\bf B}X,\bm{\delta}_{0}) \arrow{r}{{\bf B}h} & ({\bf B}Y,\bm{\delta}_{1})
		\end{tikzcd}
	\end{center}
where $h$ is the obvious induced map on the hyperspaces. Notice that since the diagram is commutative, the map ${\bf B}h\circ\pi$ is well-defined.
	By Proposition \ref{quotientproximityproperty} the function ${\bf B}h$ is a proximity map if and only if the function ${\bf B}h\circ\pi$ is a proximity map. To show that ${\bf B}h\circ\pi$ is a proximity map, it is enough to show that $h$ is a proximity map (${\bf B}h\circ\pi$ is then a composition of two proximity maps, and therefore a proximity map). Let $\mathcal{A},\mathcal{C}\subseteq\mathcal{H}_{\infty}(X)$ be such that $\mathcal{A}\delta_{0}\mathcal{C}$. We will show that $h(\mathcal{A})\delta_{1}h(\mathcal{C}),$ which will complete our proof. Let $f_{1}$ be an adequate sequence and $f_{2}$ an adequate sequence that coarse star refines $f_{1}$. Since $h$ is a coarse proximity map, it is proper. Thus, for every $n\in\mathbb{N}$ there is a least $k\in\mathbb{N}$ such that
	\[h^{-1}(\mathscr{B}(y_{0},f_{2}(n)))\subseteq \mathscr{B}(x_{0},k).\]
	We will denote this natural number by $T(n)$. Likewise, $h$ is bornologous, so for every $n\in\mathbb{N}$ there is a greatest natural number $m$ (possibly also $\infty$ for the first few $n$'s) such that
	\[d(x,y)\leq m\implies d(h(x),h(y))<n.\]
	We will denote this number by $\rho(n)$ (if $\rho(n)= \infty$ for some $n$, then set $\rho(n)=1$ instead). Since $X$ and $Y$ are unbounded and $f$ is a coarse proximity map, the functions $\rho$ and $T$ as sequences must be nondecreasing and divergent. We can choose a sequence $(n_{k})$ of natural numbers such that for any $k \in \mathbb{N},$ the following conditions hold:
	\begin{enumerate}[(i)]
	\item $k < T(n_{k}),$
	\item $k+1 < \rho(n_{k}),$
	\item $\max\{T(n_{k}),\rho(n_{k})\}-\max\{T(n_{k-1}),\rho(n_{k-1})\}>k+1.$
	\end{enumerate}
 We then define an adequate sequence $g$ by
	\[g(k)=\max\{T(n_{k}),\rho(n_{k})\}.\]
	Because $\mathcal{A}\delta_{0}\mathcal{C}$ we have that there is an $A\in\mathcal{A}$ and a $C\in\mathcal{C}$ such that $A\ll U_{x_{0}}(C,g)$ and $C\ll U_{x_{0}}(A,g)$. We then claim the following:
	\[h(A)\ll U_{y_{0}}(h(C),f_1)\text{ and }h(C)\ll U_{y_{0}}(h(A),f_{1})\]
	We will show the first of these. The second is shown similarly. Let $x\in A\ll U_{x_{0}}(C,g)$. Then there is a greatest integer $k$ such that $x\notin \mathscr{B}(x_{0},g(k))$. Then $x\in \mathscr{B}(x_{0},g(k+1))$. This implies that there is a $c\in C$ such that $d(x,c)<k+1$. Since $x\notin \mathscr{B}(x_{0},g(k)),$ we have that $x\notin \mathscr{B}(x_{0},T(n_{k}))\cup \mathscr{B}(x_{0},\rho(n_{k}))$. This implies that $h(x)\notin \mathscr{B}(y_{0},f_2(n_{k}))$. Likewise, because $k+1<\rho(n_{k})$ we have that $d(h(x),h(c))<n_{k}$. Therefore we have that $h(x)\in h(C)_{n_{k}}^{f_{2}}$ and hence, up to a bounded set, $h(A)\subseteq U_{y_{0}}(h(C),f_{2})$. Then, because $f_{2}$ coarse star refines $f_{1}$ we have $h(A)\ll U_{y_{0}}(h(C),f_{1})$. Similarly $h(C)\ll U_{y_{0}}(h(A),f_{1})$. Thus, $h(\mathcal{A})\delta_{1}h(\mathcal{C}),$ which establishes that $h:(\mathcal{H}_{\infty}(X),\delta_{0})\rightarrow(\mathcal{H}_{\infty}(Y),\delta_{1})$ is a proximity map, which consequently implies that ${\bf B}h$ is a proximity map.
\end{proof}

\begin{corollary}
	The assignment of the proximity space $(\bm{B}X,\bm{\delta})$ to an unbounded metric space $(X,d)$ and the assignment of $\bm{B}f:\bm{B}X\rightarrow\bm{B}Y$ to a closeness equivalence class of coarse proximity maps $[f]:X\rightarrow Y$ between unbounded metric spaces makes up a functor $\bm{B}$ from the category of unbounded metric spaces whose morphisms are close equivalence classes of coarse proximity maps to the category of proximity spaces whose morphisms are proximity maps. \hfill $\square$
\end{corollary}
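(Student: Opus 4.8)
The plan is to recognize that all of the genuinely analytic work is already contained in Theorem \ref{inducedmap}, so that establishing functoriality amounts to verifying the two formal functor axioms together with the well-definedness of the assignments. I would first record that the object assignment $(X,d)\mapsto(\bm{B}X,\bm{\delta})$ is unambiguous: Theorem \ref{basepointindependence} shows the proximity $\bm{\delta}$ does not depend on the basepoint used in its construction, so each unbounded metric space determines a single proximity space at infinity. On morphisms, a map of the source category is a closeness class $[h]$ of a coarse proximity map $h\colon X\to Y$, and Theorem \ref{inducedmap} provides exactly what is needed: $\bm{B}h\colon\bm{B}X\to\bm{B}Y$, $[A]\mapsto[h(A)]$, is a well-defined proximity map, and $\bm{B}h=\bm{B}l$ whenever $h\sim l$. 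Hence $[h]\mapsto\bm{B}h$ is a well-defined function that lands in the category of proximity spaces with proximity maps.

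Next I would check the identity axiom. For any unbounded $A\subseteq X$ we have $\bm{B}(id_{X})([A])=[id_{X}(A)]=[A]$, so $\bm{B}(id_{X})$ is the identity proximity map on $\bm{B}X$, as required. I would then verify compatibility with composition. Given coarse proximity maps $f\colon X\to Y$ and $g\colon Y\to Z$, their composite $g\circ f$ is again a coarse proximity map by Remark \ref{remark13}, and the composite morphism in the source category is the closeness class $[g\circ f]$, well-defined by Proposition \ref{compositionofcoarsnessclasses}. For every unbounded $A\subseteq X$, evaluating both sides gives $\bm{B}(g\circ f)([A])=[g(f(A))]$ and $(\bm{B}g\circ\bm{B}f)([A])=\bm{B}g([f(A)])=[g(f(A))]$, so the two proximity maps agree pointwise on $\bm{B}X$ and therefore coincide. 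This yields $\bm{B}(g\circ f)=\bm{B}g\circ\bm{B}f$.

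The main obstacle does not lie in the functor axioms, which are purely combinatorial once the identity $\bm{B}h([A])=[h(A)]$ is in hand, but entirely in Theorem \ref{inducedmap}. That theorem supplies the single nontrivial ingredient, namely that the induced set-level map $h$ on hyperspaces is a proximity map --- proved through the coarse star-refinement of Proposition \ref{starrefinement} together with the properness and bornologous estimates encoded by the auxiliary sequences $T$ and $\rho$ --- from which $\bm{B}h$ inherits the proximity-map property via the quotient criterion of Proposition \ref{quotientproximityproperty}. Granting that theorem, no further metric estimates are needed, and the corollary follows by the elementary algebra of composing functions and passing to $\phi$-equivalence classes.
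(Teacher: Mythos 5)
Your proposal is correct and matches the paper's intent exactly: the paper marks this corollary with $\square$ precisely because, as you observe, all substantive content is in Theorem \ref{inducedmap} (together with Theorem \ref{basepointindependence} for well-definedness of the object assignment), leaving only the routine verification of the identity and composition axioms, which you carry out correctly via $\bm{B}h([A])=[h(A)]$ and Proposition \ref{compositionofcoarsnessclasses}. No gaps; your write-up simply makes explicit what the paper leaves as immediate.
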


\begin{corollary}
	If $(X,d_{1})$ and $(Y,d_{2})$ are unbounded proximally coarse equivalent metric spaces, then their corresponding proximity spaces at infinity are proximally isomorphic. In particular, they are homeomorphic. \hfill $\square$
\end{corollary}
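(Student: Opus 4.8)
The plan is to deduce the statement formally from the functoriality of $\bm{B}$ established in the preceding corollary, via the general principle that a functor carries isomorphisms to isomorphisms. First I would unwind the hypothesis: since $(X,d_{1})$ and $(Y,d_{2})$ are proximally coarse equivalent, there is a proximal coarse equivalence $f\colon X\to Y$, that is, a coarse proximity map admitting a coarse proximity map $g\colon Y\to X$ with $g\circ f\sim id_{X}$ and $f\circ g\sim id_{Y}$, where $\sim$ denotes closeness of coarse proximity maps.

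Next I would apply $\bm{B}$. By Theorem \ref{inducedmap}, close coarse proximity maps have equal images under $\bm{B}$, so $\bm{B}(g\circ f)=\bm{B}(id_{X})$ and $\bm{B}(f\circ g)=\bm{B}(id_{Y})$. Directly from the defining formula $\bm{B}h([A])=[h(A)]$ one reads off $\bm{B}(id_{X})=id_{\bm{B}X}$ and $\bm{B}(id_{Y})=id_{\bm{B}Y}$. Combining this with functoriality, which supplies $\bm{B}(g\circ f)=\bm{B}g\circ\bm{B}f$ and $\bm{B}(f\circ g)=\bm{B}f\circ\bm{B}g$, yields
\[
\bm{B}g\circ\bm{B}f=id_{\bm{B}X}\qquad\text{and}\qquad \bm{B}f\circ\bm{B}g=id_{\bm{B}Y}.
\]
Hence $\bm{B}f$ is a bijection with two-sided inverse $\bm{B}g$, and since both $\bm{B}f$ and $\bm{B}g$ are proximity maps (again by Theorem \ref{inducedmap}), $\bm{B}f$ is a proximity isomorphism. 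Therefore $(\bm{B}X,\bm{\delta}_{0})$ and $(\bm{B}Y,\bm{\delta}_{1})$ are proximally isomorphic.

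For the final clause, I would invoke the fact that every proximity map is continuous with respect to the induced topologies. Thus $\bm{B}f$ and its inverse $\bm{B}g$ are mutually inverse continuous maps, whence $\bm{B}f$ is a homeomorphism of the underlying topological spaces.

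I do not anticipate a genuine obstacle: all of the analytic difficulty lives upstream, in Theorem \ref{inducedmap} (well-definedness and the proximity-map property of $\bm{B}h$, together with its insensitivity to closeness) and in the functoriality corollary. The single point meriting a moment's care is that $\bm{B}$ respects identities and composition strictly, not merely up to closeness; the former is immediate from $\bm{B}h([A])=[h(A)]$, and the latter is exactly the functor corollary that I am permitted to assume.
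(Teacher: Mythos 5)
Your proposal is correct and is precisely the argument the paper intends by marking this corollary with $\square$: the functor $\bm{B}$ from Theorem \ref{inducedmap} and the functoriality corollary carries the (closeness class of the) proximal coarse equivalence, which is an isomorphism in the category of unbounded metric spaces with closeness classes of coarse proximity maps as morphisms, to a proximity isomorphism, and continuity of proximity maps gives the homeomorphism. Your one point of care --- that $\bm{B}$ respects identities and compositions strictly at the level of closeness classes --- is exactly what the paper's functoriality corollary provides, so there is no gap.
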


\begin{example}
	Let $X=\{n^{2}\mid n\in\mathbb{N}\}\cup\{0\}$ be equipped with its usual metric. Then if $A,B\subseteq X$ are unbounded subsets we have that the Hausdorff distance between $A$ and $B$ is finite if and only if $A$ and $B$ differ by a bounded set. Likewise, there is an adequate sequence $g$ such that for all unbounded sets $A$ one has that $U_{0}(A,g)\setminus A$ is bounded (one could take $g(n)=n^{3}$ for example). Then if $\mathcal{H}_{\infty}(X)$ is given the proximity $\delta$ constructed using the basepoint $0,$ we have that two subsets $\mathcal{A},\mathcal{B}\subseteq\mathcal{H}_{\infty}(X)$ are close if any only if there is an $A\in\mathcal{A}$ and a $B\in\mathcal{B}$ such that the Hausdorff distance between $A$ and $B$ is finite. Thus the proximity $\hat{\bm{\delta}}$ on ${\bf B}X$ defined by $\pi_{X}(\mathcal{A})\hat{\bm{\delta}}\pi_{X}(\mathcal{B})$ if and only if $\pi_{X}(\mathcal{A})\cap\pi_{X}(\mathcal{B})\neq\emptyset$ is a proximity on ${\bf B}X$ for which the projection $\pi_{X}:X\rightarrow{\bf B}X$ is a proximity map. The proximity $\hat{\bm{\delta}}$ is the finest possible proximity on ${\bf B}X,$ and hence the finest proximity on ${\bf B}X$ for which $\pi_{X}$ is a proximity map. Thus, $\hat{\bm{\delta}}$ is the quotient proximity and $({\bf B}X,\hat{\bm{\delta}})$ is the proximity at infinity of $X$. The topology on ${\bf B}X$ is discrete.
\end{example}

\begin{proposition}
	Let $\mathbb{Z}$ have its natural metric structure and corresponding coarse proximity structure. Then ${\bf B}\mathbb{Z}$ is not connected and has at least $3$ connected components.
\end{proposition}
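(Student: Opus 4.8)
The plan is to exhibit a surjective proximity map from $({\bf B}\mathbb{Z},\bm{\delta})$ onto the three-point discrete proximity space, whose point-preimages are then nonempty clopen sets partitioning ${\bf B}\mathbb{Z}$; three disjoint nonempty clopen pieces force at least three components. Every unbounded subset of $\mathbb{Z}$ falls into exactly one of three \emph{types}: unbounded above and bounded below ($+$), unbounded below and bounded above ($-$), or unbounded in both directions ($\pm$). Since finite Hausdorff distance preserves the type, and $\phi$ coincides with finite Hausdorff distance on (nonempty) unbounded sets by Proposition \ref{metricconsistency}, the type assignment descends to a well-defined map $g\colon {\bf B}\mathbb{Z}\to\{+,-,\pm\}$, where the target carries the discrete proximity $\delta_{\mathrm{disc}}$ (given by $S\,\delta_{\mathrm{disc}}\,T\iff S\cap T\neq\emptyset$), whose induced topology is discrete. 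Because $\bm{\delta}$ is the quotient proximity induced by $\pi\colon(\mathcal{H}_{\infty}(\mathbb{Z}),\delta_{0})\to{\bf B}\mathbb{Z}$, Proposition \ref{quotientproximityproperty} reduces proving that $g$ is a proximity map to proving that the composite $g\circ\pi$, namely the type map on $\mathcal{H}_{\infty}(\mathbb{Z})$, is a proximity map.

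So the core step is: whenever $\mathcal{A},\mathcal{C}\subseteq\mathcal{H}_{\infty}(\mathbb{Z})$ have disjoint type-sets, then $\mathcal{A}\,\bar{\delta}_{0}\,\mathcal{C}$. I would prove this with the single witnessing adequate sequence $f(n)=n^{3}$ (which is adequate since $n^{3}-(n-1)^{3}=3n^{2}-3n+1>n+1$ for $n>1$), basepoint $x_{0}=0$, exploiting that the relevant estimate depends only on the types of the two sets. Concretely, for each mismatched pair of types I would verify that for every $A\in\mathcal{A}$ and $C\in\mathcal{C}$ one has $A\not\ll U(C,f)$ or $C\not\ll U(A,f)$. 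If $A$ has type $+$ and $C$ has type $-$, write $C\subseteq(-\infty,M_{C}]$ with $M_{C}=\max C$; a positive integer $a\in A$ lies in $U(C,f)$ only if some $n$ satisfies both $n>d(a,C)=a-M_{C}$ and $a\ge f(n)=n^{3}$, which is impossible once $a$ is large. Hence the positive tail of $A$ lies in $\mathbb{Z}\setminus U(C,f)$, so $A\cap(\mathbb{Z}\setminus U(C,f))$ is unbounded and axiom (v) gives $A\,{\bf b}\,(\mathbb{Z}\setminus U(C,f))$, i.e.\ $A\not\ll U(C,f)$. The same tail-escape computation shows that a type-$\pm$ set is never a coarse-neighborhood companion of a one-sided set: if $A$ has type $+$ and $C$ has type $\pm$, then the negative tail of $C$ escapes $U(A,f)$, giving $C\not\ll U(A,f)$; the cases involving type $-$ follow by applying the isometry $z\mapsto-z$.

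Since all six ordered mismatched type-pairs are covered by this one sequence $f$, the type map $g\circ\pi$ sends $\delta_{0}$-close collections to type-sets that meet, so it is a proximity map; by Proposition \ref{quotientproximityproperty}, $g$ is a proximity map and hence continuous. As the topology on $\{+,-,\pm\}$ is discrete, each of $g^{-1}(+),\,g^{-1}(-),\,g^{-1}(\pm)$ is clopen in ${\bf B}\mathbb{Z}$; each is nonempty because the classes $[\mathbb{Z}_{+}],[\mathbb{Z}_{-}],[\mathbb{Z}]$ realize the three types; and they partition ${\bf B}\mathbb{Z}$. Any connected component, being connected, lies inside a single one of these clopen sets, so there are at least three components. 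I expect the main obstacle to be the uniformity in the core step: arguing that one adequate sequence simultaneously witnesses $\bar{\delta}_{0}$-separation for \emph{all} pairs drawn from two type-disjoint families, which is precisely why the rapidly growing choice $f(n)=n^{3}$ and the type-only dependence of the tail-escape estimate are essential.
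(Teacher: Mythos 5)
Your proposal is correct and follows essentially the same route as the paper: both partition unbounded subsets of $\mathbb{Z}$ into the three types (bounded below, bounded above, neither), show that collections with disjoint type-sets are far in $(\mathcal{H}_{\infty}(\mathbb{Z}),\delta_{0})$, and use the quotient proximity property to descend the type map to a proximity map from ${\bf B}\mathbb{Z}$ onto the three-point discrete proximity space, forcing at least three components. The only difference is one of detail: the paper simply asserts that \emph{any} adequate sequence separates mismatched types, while you verify the separation explicitly with the single witnessing sequence $f(n)=n^{3}$ and the tail-escape estimate, which is a correct (and slightly more careful) filling-in of the same step.
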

\begin{proof}
	Consider the following subsets of $\mathcal{H}_{\infty}(\mathbb{Z})$
	
	\[\begin{array}{rcl}
	\mathcal{A} & := & \{A\in\mathcal{H}_{\infty}(\mathbb{Z})\mid \exists z\in\mathbb{Z}\,\forall x\in A\,z\leq x\},\\
	\mathcal{C} & := & \{C\in\mathcal{H}_{\infty}(\mathbb{Z})\mid\exists z\in\mathbb{Z}\,\forall x\in C\,z\geq x\},\\
	\mathcal{D} & := & \mathcal{H}_{\infty}(\mathbb{Z})\setminus(\mathcal{A}\cup\mathcal{C}),
	\end{array}\]
	
	i.e., $\mathcal{A}$ is the set of unbounded subsets of $\mathbb{Z}$ that have a lower bound, $\mathcal{C}$ is the set of unbounded subsets of $\mathbb{Z}$ that have an upper bound, and $\mathcal{D}$ is the set of unbounded subsets of $\mathbb{Z}$ that have neither a lower bound nor an upper bound. Clearly $\mathcal{A}, \mathcal{C},$ and $\mathcal{D}$ are mutually disjoint. These three sets are trivially closed under the relation of having finite Hausdorff distance, i.e., if $A,B \in \mathcal{H}_{\infty}(\mathbb{Z})$ and $d_H(A,B)< \infty,$ then both $A$ and $B$ are in $\mathcal{A},$ both $A$ and $B$ are in $\mathcal{C},$ or both $A$ and $B$ are in $\mathcal{D}.$ If $f$ is any adequate sequence, then given $A\in\mathcal{A},$ there is no $C\in\mathcal{C}$ or $D\in\mathcal{D}$ such that $D\ll U(A,f)$ or $C\ll U(A,f),$ regardless of the choice of a basepoint. Similarly,  if $f$ is any adequate sequence, then given $C\in\mathcal{C},$ there is no $A\in\mathcal{A}$ or $D\in\mathcal{D}$ such that $D\ll U(C,f)$ or $A\ll U(C,f),$ regardless of the choice of a basepoint. Consequently, no two of $\mathcal{A},\mathcal{C},$ or $\mathcal{D}$ are close in the hyperspace at infinity. We then let $\bm{3}=\{a,c,d\}$ be the discrete proximity space on $3$ elements. The function $h:(\mathcal{H}_{\infty}(\mathbb{Z}),\delta)\rightarrow\bm{3}$ defined by $h(\mathcal{A})=a,\,h(\mathcal{C})=c,$ and $h(\mathcal{D})=d$ is a proximity mapping that is constant on the fibers of the projection $\pi:\mathcal{H}_{\infty}(\mathbb{Z})\rightarrow{\bf B}\mathbb{Z}$. Thus there is a unique proximity map $g:{\bf B}\mathbb{Z}\rightarrow\bm{3}$ such that $g\circ\pi=h$. Thus ${\bf B}\mathbb{Z}$ is not connected and has at least $3$ connected components.
\end{proof}

\begin{proposition}
	Let $\mathbb{N}$ have its natural metric structure and corresponding coarse proximity structure. Then the proximity on ${\bf B}\mathbb{N}$ is not discrete. 
\end{proposition}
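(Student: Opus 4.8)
The plan is to show that $\bm{\delta}$ differs from the discrete (i.e.\ finest) proximity on $\mathbf{B}\mathbb{N}$ by producing two \emph{disjoint} subsets of $\mathbf{B}\mathbb{N}$ that are nevertheless $\bm{\delta}$-close. Since $\bm{\delta}$ is the quotient proximity, the projection $\pi:(\mathcal{H}_{\infty}(\mathbb{N}),\delta_{0})\to\mathbf{B}\mathbb{N}$ is a proximity map, so it suffices to exhibit $\mathcal{A},\mathcal{C}\subseteq\mathcal{H}_{\infty}(\mathbb{N})$ with $\mathcal{A}\,\delta_{0}\,\mathcal{C}$ and $\pi(\mathcal{A})\cap\pi(\mathcal{C})=\emptyset$; then $\mathcal{P}:=\pi(\mathcal{A})$ and $\mathcal{Q}:=\pi(\mathcal{C})$ satisfy $\mathcal{P}\,\bm{\delta}\,\mathcal{Q}$ yet $\mathcal{P}\cap\mathcal{Q}=\emptyset$, which is impossible for the discrete proximity.

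For the witnesses I would take $\mathcal{A}=\{\mathbb{N}\}$ and $\mathcal{C}=\{C\in\mathcal{H}_{\infty}(\mathbb{N})\mid d_{H}(C,\mathbb{N})=\infty\}$. Disjointness of the images is immediate: every $C\in\mathcal{C}$ has infinite Hausdorff distance to $\mathbb{N}$, so $[\mathbb{N}]\notin\pi(\mathcal{C})$, hence $\pi(\mathcal{A})\cap\pi(\mathcal{C})=\emptyset$. To verify $\{\mathbb{N}\}\,\delta_{0}\,\mathcal{C}$, note first that $U(\mathbb{N},f)=\mathbb{N}$ for every adequate $f$ (as $A_{0}^{f}=\mathbb{N}$), so the condition $C\ll U(\mathbb{N},f)$ holds automatically for every $C$ (its complement is $\emptyset$). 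Thus $\{\mathbb{N}\}\,\delta_{0}\,\mathcal{C}$ reduces to the claim that for every adequate sequence $f$ there is some $C\in\mathcal{C}$ with $\mathbb{N}\ll U(C,f)$; and by Lemma \ref{lemma1} (which says $X$ is coarsely near every unbounded subset), $\mathbb{N}\ll U(C,f)$ holds precisely when $\mathbb{N}\setminus U(C,f)$ is bounded.

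So the crux is: given an arbitrary adequate $f$, construct an unbounded $C=C_{f}$ with \emph{unbounded} gaps (so $d_{H}(C_{f},\mathbb{N})=\infty$, i.e.\ $C_{f}\in\mathcal{C}$) yet with $U(C_{f},f)$ \emph{cofinite up to a bounded set}. I would build $C_{f}$ by deleting a sequence of ever longer gaps that are pushed far enough out to be ``bridged'' by the expanding neighborhoods defining $U(\cdot,f)$. Concretely, choosing integers $t_{1}<t_{2}<\cdots$ with $t_{k}\ge f(k+1)$ and $t_{k+1}>t_{k}+2k$, set $C_{f}=\mathbb{N}\setminus\bigcup_{k\ge 1}\{t_{k}+1,\dots,t_{k}+2k-1\}$. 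The $k$-th gap has a point at distance $k$ from $C_{f}$, so $d_{H}(C_{f},\mathbb{N})=\infty$. For $m\ge f(1)$ one checks $m\in U(C_{f},f)$: if $m\in C_{f}$ take $n=1$; otherwise $m$ lies in the $k$-th gap with $\delta:=d(m,C_{f})\le k$, and taking $n=\delta+1\le k+1$ gives $m\in\mathscr{B}(C_{f},n)$ while $m>t_{k}\ge f(k+1)\ge f(n)$ forces $m\notin\mathscr{B}_{f(n)}$, so $m\in A_{n}^{f}\subseteq U(C_{f},f)$. Hence $\mathbb{N}\setminus U(C_{f},f)\subseteq[0,f(1))$ is bounded and $\mathbb{N}\ll U(C_{f},f)$.

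Putting these together yields $\{\mathbb{N}\}\,\delta_{0}\,\mathcal{C}$, hence $\{[\mathbb{N}]\}\,\bm{\delta}\,\pi(\mathcal{C})$ with $[\mathbb{N}]\notin\pi(\mathcal{C})$, so $\bm{\delta}$ is not discrete. The one genuine obstacle is the third step: no single $C$ can make $U(C,f)$ cofinite for \emph{all} adequate $f$ at once (a sufficiently fast-growing $f$ defeats any fixed gap pattern), so the argument must exploit that the definition of $\delta_{0}$ permits choosing the representative $C$ \emph{after} seeing $f$. This is exactly why $\mathcal{C}$ must be taken as a whole family rather than a single set.
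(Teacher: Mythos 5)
Your proof is correct, and its engine is the same as the paper's: given an adequate sequence $f$, build \emph{after seeing $f$} a sparse subset of $\mathbb{N}$ whose gaps grow without bound (so it is infinitely Hausdorff-far from $\mathbb{N}$) but slowly enough relative to $f$ that its coarse neighborhood $U(\cdot,f)$ swallows all of $\mathbb{N}$, and then pair it with $\mathbb{N}$ itself, for which $U(\mathbb{N},f)=\mathbb{N}$ makes one half of the closeness condition automatic. Where you genuinely differ is in the choice of the two saturated families: the paper takes $\mathcal{A}$ to be the unbounded sets of asymptotic dimension $0$ and $\mathcal{C}$ those of asymptotic dimension $1$, so that disjointness of $\pi(\mathcal{A})$ and $\pi(\mathcal{C})$ rests on the invariance of asymptotic dimension under finite Hausdorff distance, and the constructed sparse set (arithmetic progressions of step $n$ out to radius $\lceil f(100n)\rceil$) needs a verification that it has asymptotic dimension $0$; you instead take $\mathcal{A}=\{\mathbb{N}\}$ and $\mathcal{C}=\{C\in\mathcal{H}_{\infty}(\mathbb{N}) \mid d_{H}(C,\mathbb{N})=\infty\}$, so disjointness of the images holds by definition and no dimension theory enters at all --- your argument is more elementary and self-contained, and your reduction via Lemma \ref{lemma1} (that $\mathbb{N}\ll U(C,f)$ amounts to boundedness of the complement) is sound. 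What the paper's packaging buys is a sharper structural statement --- the asymptotic-dimension-$0$ classes form a set close to the asymptotic-dimension-$1$ classes, which feeds the remark following its proof --- while yours isolates the minimal content, namely that the single point $[\mathbb{N}]$ of ${\bf B}\mathbb{N}$ is $\bm{\delta}$-close to a set of classes not containing it. Your closing observation that no fixed $C$ can work for every adequate $f$, so that the quantifier order in the definition of $\delta_{0}$ (representatives chosen after $f$) is essential, is precisely the feature the paper's construction exploits as well. One cosmetic point: your $U(C_{f},f)$ is in fact all of $\mathbb{N}$, not merely cofinite up to a bounded set, exactly as $U_{1}(A,f)=\mathbb{N}$ in the paper's proof.
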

\begin{proof}
Let $\mathcal{A}\subseteq\mathcal{H}_{\infty}(\mathbb{N})$ be the set of all unbounded subsets of $\mathbb{N}$ that have asymptotic dimension $0$. Let $\mathcal{C}\subseteq\mathcal{H}_{\infty}(\mathbb{N})$ be the set of all unbounded subsets of $\mathbb{N}$ that have asymptotic dimension $1$. It is clear that $\mathcal{A}$ and $\mathcal{C}$ are closed under the relation of having finite Hausdorff distance. It is also clear that $\mathcal{A}$ and $\mathcal{C}$ are disjoint. We will show that $\mathcal{A}\delta\mathcal{C},$ which will imply that $\pi(\mathcal{A})\bm{\delta}\pi(\mathcal{C}),$ showing that the proximity on ${\bf B}\mathbb{N}$ is not discrete.

We will use $1$ as a basepoint. Let $f$ be an adequate sequence. Let us first construct an unbounded set $A$ of asymptotic dimension $0$ in the following way: define
\[g(n) = \lceil f(100n)\rceil,\]
and define $A_{1}$ to be the integral interval $[1,g(1)]$. Then, for every natural number $n > 1$ let $\eta_{n}=\max(A_{n-1}),$ and define 
\[A_{n}=\{m\in\mathbb{N}\mid\exists k\in\mathbb{N}\cup\{0\},\, m=\eta_{n}+kn,\,m\leq g(n)\}.\]
Finally, define $A=\bigcup A_{n}$. This set is clearly unbounded and has asymptotic dimension $0$ because for each real number $r\geq 0$ the set of $r$-components is uniformly bounded. Also, by construction of $A$ we have that $U_{1}(A,f)=\mathbb{N},$ and consequently $U_{1}(A,f)$ is a coarse neighborhood of every subset of $\mathbb{N}$. Thus, setting $C=\mathbb{N},$ we have that $A \in \mathcal{A}, C \in \mathcal {C}, A\ll U_1(C,f),$ and $C\ll U_1(A,f).$ Since $f$ was arbitrary, this shows that $\mathcal{A}\delta\mathcal{C},$ which consequently implies that $\pi(\mathcal{A})\bm{\delta}\pi(\mathcal{C}),$ as desired.
\end{proof}

\begin{remark}
The above proof shows that as an element of the hyperspace at infinity of $\mathbb{N},$ the singleton $\mathbb{N}$ is close to the set consisting of asymptotic dimension $0$ sets.
\end{remark}

\begin{corollary}
	If $(X,d)$ is an unbounded metric space into which $\mathbb{N}$ coarsely embeds, then ${\bf B}X$ is not discrete.
\end{corollary}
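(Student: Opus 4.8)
The plan is to transport the non-discreteness of ${\bf B}\mathbb{N}$, which was established in the proposition showing that the proximity on ${\bf B}\mathbb{N}$ is not discrete, across the functor ${\bf B}$ using the coarse embedding itself as a morphism. First I would recall that a coarse embedding $i:\mathbb{N}\rightarrow X$ comes equipped with nondecreasing control functions $\rho_{-},\rho_{+}:[0,\infty)\rightarrow[0,\infty)$ satisfying $\rho_{-}(d(m,n))\leq d(i(m),i(n))\leq\rho_{+}(d(m,n))$ for all $m,n$, with $\rho_{-}(t)\rightarrow\infty$. The upper bound makes $i$ bornologous, while the lower bound together with $\rho_{-}(t)\rightarrow\infty$ makes $i$ proper; hence $i$ is a coarse map, and by Proposition \ref{coarseequalsproximity} it is therefore a coarse proximity map. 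By Theorem \ref{inducedmap}, $i$ then induces a proximity map ${\bf B}i:({\bf B}\mathbb{N},\bm{\delta}_{\mathbb{N}})\rightarrow({\bf B}X,\bm{\delta}_{X})$ given by ${\bf B}i([A])=[i(A)]$.

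The key step is to show that ${\bf B}i$ is injective. Suppose ${\bf B}i([A])={\bf B}i([B])$, i.e.\ $i(A)$ and $i(B)$ have finite Hausdorff distance in $X$. I would use the lower control $\rho_{-}$ to reflect this back to $\mathbb{N}$: whenever $d(i(a),i(b))\leq M$ one has $\rho_{-}(d(a,b))\leq M$, and since $\rho_{-}$ diverges, $d(a,b)$ is bounded by the finite quantity $\sup\{t\mid\rho_{-}(t)\leq M\}$. Consequently $d_{H}(i(A),i(B))<\infty$ forces $d_{H}(A,B)<\infty$, which by Proposition \ref{metricconsistency} means $[A]=[B]$ in ${\bf B}\mathbb{N}$. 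Thus ${\bf B}i$ is injective.

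Finally I would combine these observations. Non-discreteness of the proximity on ${\bf B}\mathbb{N}$ means precisely that there exist disjoint subsets $\mathcal{P},\mathcal{Q}\subseteq{\bf B}\mathbb{N}$ with $\mathcal{P}\,\bm{\delta}_{\mathbb{N}}\,\mathcal{Q}$ (concretely, the $\pi$-images of the families of asymptotic-dimension-$0$ and asymptotic-dimension-$1$ subsets). Because ${\bf B}i$ is a proximity map we obtain ${\bf B}i(\mathcal{P})\,\bm{\delta}_{X}\,{\bf B}i(\mathcal{Q})$, and because ${\bf B}i$ is injective the images ${\bf B}i(\mathcal{P})$ and ${\bf B}i(\mathcal{Q})$ remain disjoint. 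A pair of disjoint subsets of ${\bf B}X$ that are nevertheless $\bm{\delta}_{X}$-close witnesses that the proximity on ${\bf B}X$ is not discrete, which is the desired conclusion.

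The main obstacle is the injectivity of ${\bf B}i$: the rest is formal functoriality plus the elementary fact that a proximity is discrete exactly when it relates no disjoint pair, but injectivity genuinely requires the defining lower bound of a coarse embedding rather than merely the coarse-map axioms (a general coarse map need not induce an injection on the proximity at infinity). The only point demanding care there is that the ``inverse'' of $\rho_{-}$ is just the pseudo-inverse $M\mapsto\sup\{t\mid\rho_{-}(t)\leq M\}$, which is finite precisely because $\rho_{-}\to\infty$; granting this, the reflection of finite Hausdorff distance is routine.
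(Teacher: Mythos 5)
Your proposal is correct and follows the route the paper intends: the corollary is stated without proof as an immediate consequence of the non-discreteness of ${\bf B}\mathbb{N}$ together with the functoriality of ${\bf B}$ (Proposition \ref{coarseequalsproximity} and Theorem \ref{inducedmap}), which is exactly your argument. The one detail the paper leaves implicit, and which you correctly identify and supply, is the injectivity of ${\bf B}i$ via the pseudo-inverse of the lower control function $\rho_{-}$, which guarantees that the images of the two disjoint close subsets of ${\bf B}\mathbb{N}$ remain disjoint in ${\bf B}X$.
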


\section{Questions}

We conclude with some open questions.
\vspace{\baselineskip}

Intuitively we think of unbounded subsets in a proper metric space $X$ as corresponding to their trace in the Higson corona of $X$. The proximity space $\bm{B}X$ was motivated by considering the Vietoris topology on the hyperspace, $\mathcal{H}(\nu X)$, of the Higson corona. The natural question is then the following:

\begin{question}
	Given an unbounded proper metric space $(X,d)$ what relationship exists between the Vietoris topology on the hyperspace of the Higson corona, $\nu X$, and the proximity space at infinity of $X$, ${\bf B}X$?
\end{question}

It has been proven that if a proper metric space $X$ has finite asymptotic dimension equal to $n,$ then the Higson corona of that space has topological dimension equal to $n$. The question of whether or not the topological dimension of the Higson corona of a proper metric space of infinite asymptotic dimension is infinite remains open. We consider a similar relationship between dimensions in the following question.

\begin{question}
	Given an unbounded metric space $(X,d)$ what is the relationship between the topological dimension of $\bm{B}X$ and the asymptotic dimension of $X$?
\end{question}

\bibliographystyle{abbrv}
\bibliography{Coarse_Proximity_and_Proximity_at_Infinity}{}

\end{document}